\tikzstyle{hipotese} = [rectangle, rounded corners, minimum width=3cm, minimum height=1cm,text centered, draw=black, fill=red!30]
\tikzstyle{tese} = [diamond, minimum width=3cm, minimum height=1cm, text centered, draw=black, fill=green!30]
\tikzstyle{arrow} = [thick,->,>=stealth]
\newtheorem{theorem}{Theorem}
\numberwithin{theorem}{section}
\newtheorem{proposition}[theorem]{Proposition}
\newtheorem{corollary}[theorem]{Corollary}
\newtheorem{lemma}[theorem]{Lemma}
\theoremstyle{definition}
\newtheorem{definition}[theorem]{Definition}
\newtheorem{example}[theorem]{Example}
\theoremstyle{remark}
\newtheorem*{remark}{Remark}
\newcommand{\C}{\mathbb{C}}
\newcommand{\R}{\mathbb{R}}
\newcommand{\N}{\mathbb{N}}
\newcommand{\Mcal}{\mathcal{M}}
\newcommand{\Pcal}{\mathcal{P}}
\newcommand{\Wcal}{\mathcal{W}}
\newcommand{\Tcal}{\mathcal{T}}
\newcommand{\Ccal}{\mathcal{C}}
\newcommand{\Fcal}{\mathcal{F}}
\newcommand{\Ord}{\mathcal{O}}
\newcommand{\Zcal}{\mathcal{Z}}
\newcommand{\dd}{\mathrm{d}}
\newcommand{\ee}{\mathrm{e}}
\newcommand{\ii}{\mathrm{i}}
\newcommand{\Leb}{\mathrm{m}_2}
\newcommand{\wh}{\widehat}
\newcommand\restr[2]{{#1}\raisebox{-.3ex}{$|$}_{#2}}
\newcommand*{\defeq}{\mathrel{\vcenter{\baselineskip0.5ex \lineskiplimit0pt
                     \hbox{\scriptsize.}\hbox{\scriptsize.}}}%
                     =}
\newcommand*{\eqdef}{=\mathrel{\vcenter{\baselineskip0.5ex \lineskiplimit0pt
                     \hbox{\scriptsize.}\hbox{\scriptsize.}}}%
                     }
\newcommand{\en}{\I}
\newcommand{\Hcal}{\mathcal H}
\renewcommand{\phi}{\varphi}
\let\Re\relax
\DeclareMathOperator{\Re}{Re}
\DeclareMathOperator{\re}{Re}
\let\Im\relax
\DeclareMathOperator{\Im}{Im}
\DeclareMathOperator{\inter}{int}
\DeclareMathOperator{\Img}{Im}
\DeclareMathOperator{\capp}{cap}
\DeclareMathOperator{\Real}{Re}
\DeclareMathOperator{\supp}{supp}
\DeclareMathOperator{\diam}{diam}
\DeclareMathOperator{\dist}{dist}
\DeclareMathOperator{\order}{ord}
\DeclareMathOperator{\U}{U}
\DeclareMathOperator{\D}{D}
\DeclareMathOperator{\I}{I}
\DeclareMathOperator{\CT}{C}
\numberwithin{equation}{section}
\title{The Pólya-Tchebotarev problem with semiclassical external fields}
\author{Victor Alves\thanks{Corresponding author} \and Guilherme L.~F.~Silva}
\date{}
\def\keywords{\xdef\@thefnmark{}\@footnotetext}
\def\address{\xdef\@thefnmark{}\@footnotetext}
\begin{document}

\maketitle

\address{Instituto de Ci\^encias Matem\'aticas e de Computa\c{c}\~ao, Universidade de S\~ao Paulo, S\~ao Carlos, SP, 13566-590, Brazil, \texttt{victorjulio@usp.br, silvag@icmc.usp.br}}

\keywords{{\it Keywords:} Pólya-Tchebotarev problem; Rational external field; Logarithmic potential theory; Critical measures}

\abstract{
The classical Pólya-Tchebotarev problem, commonly stated as a max-min logarithmic energy problem, asks for finding a compact of minimal capacity in the complex plane which connects a prescribed collection of fixed points. Variants of this problem have found ramifications and applications in the theory of non-hermitian orthogonal polynomials, random matrices, approximation theory, among others. 

Here we consider an extension of this classical problem, including a semiclassical external field, and enforcing finitely many prescribed collections of points to be connected, possibly also to infinity. 

Our method is based on Rakhmanov's approach to max-min problems in logarithmic potential theory, utilizes the developed machinery by Martínez-Finkelshtein and Rakhmanov on critical measures, and extends the development of Kuijlaars and the second named author from the context of polynomial external fields to the semiclassical case considered here.}

\tableofcontents

\section{Introduction}

One of the natural ways of measuring sizes of sets in the complex plane is through their capacity, which is roughly the total amount of charge a set can hold while keeping its energy. In the context of Harmonic Analysis, the solution of the Dirichlet Problem on a domain is tied to the capacity of the boundary of this domain. In Approximation Theory, convergence of rational approximants can be ensured except for sets of zero capacity. In Geometric Function Theory, the capacity of a compact domain is connected with the uniformization map of it. In orthogonal polynomials' theory, their zero distributions are computed through sets of minimal capacity. These and many other connections are discussed for instance in \cite{Stahl1986, Rakhmanov2012, Martinez-FinkelshteinRakhmanov2016}.

A classical problem in Geometric Function Theory, known as the Pólya-Tchebotarev Problem, asks for finding a compact and connected set of minimal capacity that contains a given finite collection of points $\Ccal\subset\C$ \cite{Polya29, OrtegaCerda2010}. In terms of the closely related notion of logarithmic energy, the Pólya-Tchebotarev Problem may be phrased as the question of finding a compact and connected set $K_*\subset \C$ containing $\Ccal$, and for which the max-min (or sup-inf) problem
\begin{equation}\label{eq:maxminChebotarev}
\sup_{\substack{\Ccal\subset K \\ K \text{ compact} \\ \text{and connected}}}\inf_{ \substack{\supp\mu\subset K} } \iint \log \frac{1}{|x-y|}\dd\mu(x)\dd\mu(y)
\end{equation}
is attained precisely at $K=K_*$, where in the above and in what follows $\mu$ always denotes a probability measure.

	The original form of the problem was presented by Pólya in 1929 \cite{Polya29}, and solved by Lavrentieff \cite{Lavrentiev30, Lavrentiev34}, Gr\"{o}tzch \cite{Grotzsch30} and Goluzin \cite{Goluzin46} in the following years.  Much later, motivated by questions on Padé approximants and rational approximation \cite{Stahl1997, Stahl1978, NuttallSingh1977}, Stahl established a more general version of Pólya-Tchebotarev problem \cite{Stahl1985a, Stahl1985b}, in that it weakens the finiteness condition on $\Ccal$, allowing it to be an arbitrary compact set with zero capacity.

Stahl's ideas were rather influential in particular in the theory of orthogonal polynomials. In his developments, he also connected sets of minimal capacity with non-hermitian orthogonal polynomials, showing that in the large degree limit the zeros of such polynomials are described by sets of minimal capacity \cite{Stahl1986}, see also \cite{GoncharRakhmanov1987}. Starting in the early 2000s, such ideas and results also found their way with applications to other areas of mathematics such as numerical analysis \cite{DeanoHuybrechsKuijlaars2010, HuybrechsKuijlaarsLejon2014, CelsusSilva2020, Deano2014, MartinezFinkelshteinRakhmanovSuetin2011, Yattselev2018, Atiaetal2014}, random matrix theory \cite{AptekarevLysovTulyakov2011, BarhoumiBleherDeanoYattselev2022, BleherGharakhlooMclaughlin, bertola2011}, integrable systems \cite{BuckinghamMiller22, BertolaBothner2015, KamvissisRakhmanov2005, ChouikhiThabet2019}, among others. 

In many of these applications we just mentioned, the corresponding extremal problems that arise often are analogues of the original Pólya-Tchebotarev Problem, but now under the setting of {\it weighted} capacity, which is the main focus of our work here. We solve a weighted extension of the original Pólya-Tchebotarev Problem \eqref{eq:maxminChebotarev}, obtained including an external field in the energy functional: for a given finite set $\Ccal\subset \C$ and a suitable function $\phi$ defined over $\C$, find a compact and connected set $K_*\subset \C$ containing $\Ccal$, and for which the max-min problem
\begin{equation}\label{eq:maxminChebotarevext}
\sup_{\substack{\Ccal\subset K \\ K \text{ compact} \\ \text{and connected}}}\inf_{ \substack{\supp\mu\subset K} } \left( \iint \log \frac{1}{|x-y|}\dd\mu(x)\dd\mu(y)+ 2\int \phi(x) \dd\mu(x)\right)
\end{equation}
is attained at $K=K_*$.

We consider this problem for a family of semiclassical external fields with, and in fact more general class of sets, which appear naturally for instance in the context of non-hermitian orthogonality. To give a glimpse of our results in a simpler situation, let us for now assume that $\phi=\Re \Phi$ is the real part of a monic, complex-valued polynomial $\Phi$, with $\deg \Phi$ being a positive even number.

A monic polynomial $p_n$ of degree $n$ is said to be the {\it non-hermitian orthogonal polynomial} of degree $n$ with respect to the weight $\ee^{-n \Phi}$ if it satisfies the orthogonality condition
\begin{equation}\label{eq:nonhermitianOP}
\int_\Gamma p_n(z)z^k \ee^{-n\Phi(z)} \dd z=0,\quad k=0,\hdots, n-1.
\end{equation}
Assume that $\Gamma\in \Tcal$ is a representative of one of the following classes of contours:
\begin{itemize}
\item $\Tcal=\Tcal_{\text{Jac}}$: the class of contours connecting the points in $\Ccal=\{-1,1\}$.
\item $\Tcal=\Tcal_{\text{Lag}}$: the class of contours connecting the point in $\Ccal=\{0\}$ to $+\infty$.
\item $\Tcal=\Tcal_{\text{Herm}}$: the class of contours connecting $-\infty$ to $+\infty$.
\end{itemize}

When $\Phi$ is a constant/linear/quadratic real-valued polynomial and $\Tcal=\Tcal_{\text{Jac}}/\Tcal_{\text{Lag}}/\Tcal_{\text{Herm}}$, the polynomial $p_n$ reduces to (particular cases of rescaled) Jacobi/Laguerre/Hermite polynomial, respectively. For general complex-valued polynomial $\Phi$, the polynomial $p_n$, if it exists, is independent of the contour of orthogonality $\Gamma\in \Tcal$ chosen, since the orthogonality condition \eqref{eq:nonhermitianOP} remains unchanged if we deform $\Gamma$ to a new contour $\widetilde \Gamma$ within the same class. 

In each of the aforementioned cases, the flexibility in the choice of contours $\Gamma\in \Tcal$ has impact in the behavior of zeros of the corresponding OPs. As we learn from the work of Gonchar and Rakhmanov \cite{GoncharRakhmanov1987}, greatly inspired by the earlier works of Stahl \cite{Stahl1985a, Stahl1985b}, if there exists a contour $\Gamma_*\in \Tcal$ for which the max-min problem \eqref{eq:maxminChebotarevext} (when the supremum is taken over $K\in \Tcal$) is attained, then the zeros of $p_n$ should accumulate, in the large degree limit, to the contour $\Gamma_*$. Furthermore, these zeros must also be distributed according to the equilibrium measure of $\Gamma_*$, that is, the probability measure for which the infimum in \eqref{eq:maxminChebotarevext} over $\Gamma_*$ is attained. Moreover, $\Gamma_*$ is uniquely determined up to a zero capacity set.

Even though the existence of the max-min contour $\Gamma_*$ in the cases we just mentioned was conjectured in the community already in the 1990s, the rigorous proof of its existence in the case $\Tcal=\Tcal_{\text{Herm}}$ was established much later, by Kuijlaars and one of the authors \cite{Kuijlaars2015}, based on a strategy outlined by Rakhmanov \cite{Rakhmanov2012}. And in the other two cases $\Tcal=\Tcal_{\text{Lag}}, \Tcal=\Tcal_{\text{Jac}}$, the existence of $\Gamma_*$, to this date, is only known for some concrete simple choices of the potential $\Phi$, and not as a true max-min solution but rather as a contour that attracts the zeros of the orthogonal polynomials, found explicitly with ad hoc methods.

Our main result establishes the existence of the max-min contour for each of the classes $\Tcal=\Tcal_{\text{Herm}},\Tcal = \Tcal_{\text{Lag}}, \Tcal=\Tcal_{\text{Jac}}$ as above, also for classes of contours obtained from unions of elements of those, and much more, allowing also for semiclassical external fields. Thus, with our work, we place many of the aforementioned works under the same framework, and showcase how one can leverage some techniques that have been developed and explored in the last years to more general versions of the Pólya-Tchebotarev Problem.

%STATEMENT OF RESULTS
\section{Statement of results}

%% POTENTIAL THEORY
	
Before proceeding to the rigorous statement of our main result, we introduce some necessary mathematical ingredients and notions.

\subsection{Necessary concepts from potential theory}\label{subsec:PotTheory}

	Given a finite Borel measure $\mu$ on $\C$, the functions
	\begin{equation}\label{deff:logpotCauchytransf}
		\U^\mu(z) \defeq \int \log \frac{1}{|x-z|} \, \dd \mu(x)\qquad \text{and}\qquad \CT^{\mu}(z)\defeq \int\frac{\dd\mu(x)}{x-z}, \quad z\in \C\setminus \supp\mu,
	\end{equation}
	are called the \emph{logarithmic potential} and the {\it Cauchy transform} of $\mu$, respectively. The logarithmic potential $\U^\mu$ is a harmonic function on $\C\setminus \supp\mu$, and under the additional condition
	\begin{equation}\label{eq:GrowthCond}
		\int \log (1+|x|) \, \dd \mu(x) < \infty,
	\end{equation}
it takes values in $(-\infty,+\infty]$, and extends to a superharmonic function on the whole plane $\C$. The Cauchy transform $\CT^\mu$ is an analytic function on $\C\setminus \supp\mu$. It is the convolution of the measure $\mu$ with the Cauchy kernel, so it is locally integrable with respect to the planar Lebesgue measure, and therefore it extends Lebesgue almost everywhere to a finite function on $\C$. With $\partial_z\defeq \frac{1}{2}(\partial_x-\ii\partial y)$, these functions are related by
$$
2\partial_z\U^\mu(z)=\CT^\mu(z),\quad z\in \C\setminus \supp\mu,
$$
relation which also extends to values on $z\in \supp\mu$ in the weak sense.

Still for measures satisfying \eqref{eq:GrowthCond}, their logarithmic energy 
$$
\I(\mu) \defeq
\iint \log\frac{1}{|x-y|}\dd\mu(x)\,\dd\mu(y)
=\int \U^\mu(x) \, \dd \mu(x)
$$
is well defined with values on $(-\infty,+\infty]$,

For a set $F \subset \C$ which is closed, we denote by $\Mcal_1(F)$  the set of Borel probability measures supported on $F$ that satisfy \eqref{eq:GrowthCond} and have finite logarithmic energy. Given a function $\phi$ defined on $F$, we denote by $\Mcal_1^\phi(F)$ the subset of measures $\mu\in \Mcal_1(F)$ that also satisfy $\phi \in L^1(\mu)$. Thus, for $\mu \in \Mcal_1^ \phi(F)$ the \emph{(weighted logarithmic) energy} 
	$$
		\I^\phi(\mu) \defeq \I(\mu) + 2\int \phi(x) \, \dd \mu(x)
	$$	
is well defined, and takes values in $\R$.

The weighted logarithmic energy minimization problem asks for finding $\mu_F = \mu(\phi,F) \in \Mcal_1^\phi(F)$ such that
	$$
		\I^\phi(\mu_F) = \inf_{\mu \in \Mcal_1^\phi(F)} \I^\phi(\mu) \eqdef \I^\phi(F).
	$$
	The measure $\mu_F$, when it exists, is called the \emph{equilibrium measure of $F$ in the external field $\phi$} or simply the \emph{equilibrium measure of $F$} when $\phi$ is understood. 
	
Let $\Gamma$ be a closed set in $\C$ that satisfies the growth condition
	\begin{equation}\label{eq:GrowthCondGamma}
		\lim_{\substack{z \to \infty \\ z \in \Gamma}} \left[ \phi(z) - \log |z| \right] = +\infty,
	\end{equation}
and let $\Zcal$ be the set of discontinuities of $\phi$. For us the $\phi$ shall always be such that $\Zcal$ is finite.
Then the equilibrium measure $\mu(\phi,\Gamma)$ uniquely exists  \cite{Saff1997}, provided that $\Gamma$ has positive capacity, and is characterized by the {\it Euler-Lagrange variational conditions}: it is the unique measure in $\Mcal_1^\phi(\Gamma)$ for which there exists a constant $\ell$ satisfying
	\begin{equation}\label{eq:EulerLagrange}
		\U^\mu(z) +  \phi(z) = \ell, \quad z \in \supp \mu \setminus \Zcal, \qquad \text{and}\qquad
		\U^\mu(z) +  \phi(z) \geq \ell, \quad z \in \Gamma.
	\end{equation}	
In general, conditions \eqref{eq:EulerLagrange} should be interpreted in the quasi-everywhere sense, but whenever $\Zcal$ is finite and $\Gamma$ is a finite union of continua (compact sets with more than one point), they hold as stated.
%, but  the external field in \eqref{eq:PhiDef} will always be continuous except for finitely many points where the measures of interest shall have no support, so that \eqref{eq:EulerLagrange} is true as stated.

When $\phi \equiv 0$ and $F$ is a compact set with positive capacity, the class of measures $\Mcal_1(F)=\Mcal_1^{\phi\equiv 0}(F)$ is non-empty and the problem just stated is well posed. In this case  $\mu_F$ uniquely exists and is called the \emph{Robin measure} of $F$, the value $\I(F)$ is called the \emph{Robin constant} of $F$, and $\capp(F) = \ee^{-\I(F)}$ is the (logarithmic) \emph{capacity} of $F$. 

Fix a finite set $\Ccal\subset \C$. Still in this context of $\varphi\equiv 0$, the previously mentioned classical Pólya-Tchebotarev Problem \eqref{eq:maxminChebotarev} was originally stated in the equivalent formulation of finding a compact and connected set $K_*$ with $\Ccal\subset K_*$ and for which
$$
\capp(K_*)=\inf_{\substack{\Ccal\subset K  \\ K\text{ compact }\\ \text{and connected}}} \capp(K).
$$
		
As mentioned, in the work we consider here we are interested in an extension of the classical Pólya-Tchebotarev Problem \eqref{eq:maxminChebotarev}, but now for the weighted energy $\I^\varphi$. We assume from now on that $\phi$ is the real part of the sum of a rational function and a logarithm part. Without loss of generality, we place one of the poles of the rational function at $\infty$. Later on, when posing our extremal problem we will allow contours extending to $\infty$ along different sectors, and to avoid degenerate situations we assume that the pole at $\infty$ is of order at least $1$. In summary, throughout this paper we always assume that 
	\begin{equation}\label{eq:PhiDef}
	\phi \defeq \Real \Phi, \quad \text{with}\quad 
	\Phi(z) \defeq \frac{P(z)}{Q(z)} + L(z),
	\end{equation}
where $P,Q$ are polynomials and $L$ is a logarithmic component, which are assumed to satisfy the following conditions. First of all, we assume
$$
\gcd(P,Q) = 1, \quad
	 N \defeq \deg P - \deg Q \geq 1.
$$
We normalize $Q$ to be monic, and write
\begin{equation}\label{eq:formQ}
Q(z) \defeq \prod_{j=1}^{n_Q} (z-z_j)^{m_j}, \quad m_j>0,\quad  j=1,\ldots, n_Q,
\end{equation}
so that $z_j$ is a pole of order $m_j$ of $\Phi$. For the logarithmic component, we assume
\begin{equation}\label{eq:formL}
	L(z) \defeq \log \prod_{j=1}^{n_L}  (z-w_j)^{\rho_j} = \sum_{j=1}^{n_L} \rho_j \log (z-w_j), \quad \rho_j \in \R \setminus \{0\},\; j=1,\ldots,n_L.
\end{equation}
Lastly, we assume that all $z_j$ and $w_j$ are pairwise distinct.

The condition $N\geq 1$ ensures that $\infty$ is a pole of the rational part of $\Phi$, and also that condition \eqref{eq:GrowthCondGamma} holds true along at least one nontrivial sector at $\infty$. 

%The results we obtain could also be extended without the condition that $\rho\in \C\setminus \R$, as long as the points $w_j$ are not allowed to belong to the set of fixed points $\Ccal$, but we opt for excluding this case from our analysis to avoid a separate technical analysis.

	Note that the weight function $w(z) = \ee^{- \Phi(z)}$ is a \emph{semiclassical} weight, i.e., it satisfies
	\[
		\frac{w'}{w} = \Phi',
	\]	
	and $\Phi'$ is a rational function, which motivates us to call $\Phi$ a {\it semiclassical external field}.
	
We also write
\begin{equation}\label{eq:defZ}
	\Zcal \defeq \{ z_1,\ldots, z_{n_Q}, w_1, \ldots, w_{n_L}\}
\end{equation}
that is, $\Zcal$ is the set of singularities of $\Phi$.
	
The class of external fields we consider is now introduced, and we move on to the introduction of the class of contours of interest.

	\subsection{The class of admissible contours}	  

For us, we will consider the weighted version of \eqref{eq:maxminChebotarev} on a suitable large class of sets $\Tcal$ consisting of unions of contours. 

In the classical problem 	\eqref{eq:maxminChebotarev}, the class of sets consist of compact and connected sets containing $\Ccal$. As shown by Stahl \cite{Stahl1985a, Stahl1985b}, the set of minimal capacity $K_*$ is in fact a union of analytic arcs, and consequently the max-min problem \eqref{eq:maxminChebotarev} could as well be posed on the more restrictive class of sets which are unions of contours containing $\Ccal$, while still preserving its minimizer $K_*$. 

In our extension, we will allow sets $\Gamma\in \Tcal$ which are unions of contours with at least one of three distinguished features when compared with the classical Pólya-Tchebotarev setup. The first feature is enforcing that prescribed subcollections of points in $\Ccal$ must belong to the same connected components of a given $\Gamma\in \Tcal$. The second feature is enforcing that prescribed subcollections of points of $\Ccal$ to belong to the same connected components of $\Gamma\in \Tcal$, and each of these connected components must extend to $\infty$ along prescribed angular sectors. The third feature is enforcing that each $\Gamma\in \Tcal$ must have connected components stretching to $\infty$ along prescribed angular sectors. The canonical examples of connected components coming from each of these features are provided by contours in $\Tcal_{\text{Jac}}, \Tcal_{\text{Lag}}$ and $\Tcal_{\text{Herm}}$, respectively.

We now introduce appropriate notions to book-keep these properties in a careful and detailed manner. The notions we use in what follows are inspired by \cite{Kuijlaars2015}.

For what follows, denote
	$$
		D_R(z_0) \defeq \{z \in \C; |z-z_0| < R \}, \quad D_R \defeq D_R(0).
	$$	 

Consider a set of fixed points $\Ccal$, writing
	\begin{equation}\label{eq:deffC}
\Ccal = \{c_1,\ldots,c_{n_\Ccal}\} \quad \text{and}\quad C(z) \defeq \prod_{j=1}^{n_\Ccal} (z-c_j).
	\end{equation}
The points in $\Ccal$ will play the role of fixed points in the Pólya-Tchebotarev problem. If $\Ccal = \emptyset$ set $C(z) \defeq 1$.

We also throughout the paper assume that
\begin{equation}\label{eq:polesfixedpts}
\Ccal\cap \Zcal =\emptyset,
\end{equation}
that is, points in $\Ccal$ are not poles of $\Phi$.

Next, we need to discuss about sectors at $\infty$ that may be connected to each other.

	For $z = R\ee^{\ii \theta}$ and $\alpha$ the leading coefficient of the polynomial $P$, the behavior
	$$
		\phi(z) = R^{N} |\alpha|\cos\left( N \theta + \arg(\alpha) \right) \left[1 + o(1)  \right], \quad z \to +\infty
	$$  
is of immediate verification from \eqref{eq:PhiDef}. In particular, this shows that there are exactly $N$ sectors $S_1,\ldots,S_N$ such that $\phi(z) \to +\infty$ as $z \to \infty$ on $S_j$, which are given by
\begin{equation}\label{eq:deffsecSj}
		S_j \defeq \left\{ z \in \C; | \arg(z) - \theta_j| < \frac{\pi}{2N} \right\}, 
	\quad \theta_j \eqdef - \frac{\arg \alpha}{N} + \frac{2 \pi (j-1)}{N}, 
	\quad 1 \leq j \leq N.
\end{equation}
We call $S_1,\hdots, S_N$ the {\it admissible sectors for the external field $\phi$}, or simply {\it admissible sectors}, as for the rest of the paper the potential $\phi=\re\Phi$ is fixed and always of the form \eqref{eq:PhiDef},

	We say that a set $F \subset \C$ \emph{stretches to infinity} in the sector $S_j$ if there exists $\epsilon > 0$ and $r_0 > 0$ such that for every $r > r_0$, there is $z \in F$ with
	$$
		|z| = r, \quad |\arg z - \theta_j| < \frac{\pi}{2N} - \epsilon.
	$$
	In particular, this implies that $\phi(z) \to +\infty$ as $z \to \infty$ within $S_j \cap F$.

Let $\Theta = \{\theta_1 < \cdots < \theta_N\}$ be the set of the central angles of the admissible sectors as introduced in \eqref{eq:deffsecSj}. We shall use $\theta_j$ to represent the sector $S_j$. We talk about partitions $\Pcal(\Theta)$ of $\Theta$, which will be used when encoding the connection of different sectors at $\infty$ in our max-min problem. A partition $\Pcal(\Theta)$ of $\Theta$ introduces an equivalence relation $\sim$ in the usual way: $\theta_j \sim \theta_k$ if both $\theta_j$ and $\theta_k$ belong to the same set in $\Pcal(\Theta)$. We denote by $\Pcal_0(\Theta)$ the subset of $\Pcal(\Theta)$ obtained by removing all classes consisting of only one element.
	
A partition $\Pcal(\Theta)$ of $\Theta$  is said to be non-crossing if its associated equivalence relation $\sim$ satisfies the following property: if $\theta_j \sim \theta_{j'}$ and $\theta_k \sim \theta_{k'}$ with $j<k<j'<k'$, then necessarily $\theta_j\sim \theta_k$.  Later on, we also talk about non-crossing partitions of subsets of $\Theta$.

The name non-crossing is better motivated when we represent this equivalence relation on the unit circle as follows. Represent each element $\theta$ of $\Theta$ on the unit circle as the point $(\cos(\theta), \sin(\theta))$ and connect consecutive elements of an equivalence class of the partition by a contour inside the circle, also connecting the first and last elements of the same class. A partition is then non-crossing if there exists a choice of these contours in such a way that different contours can only intersect if they connect elements of the same class. We refer the reader to Figures \ref{fig:NonCrossingPartition} and \ref{fig:CrossingPartition} for examples. 
	\begin{figure}[t]
		\begin{subfigure}{.5\textwidth}
  \centering
  \includegraphics[scale=.17]{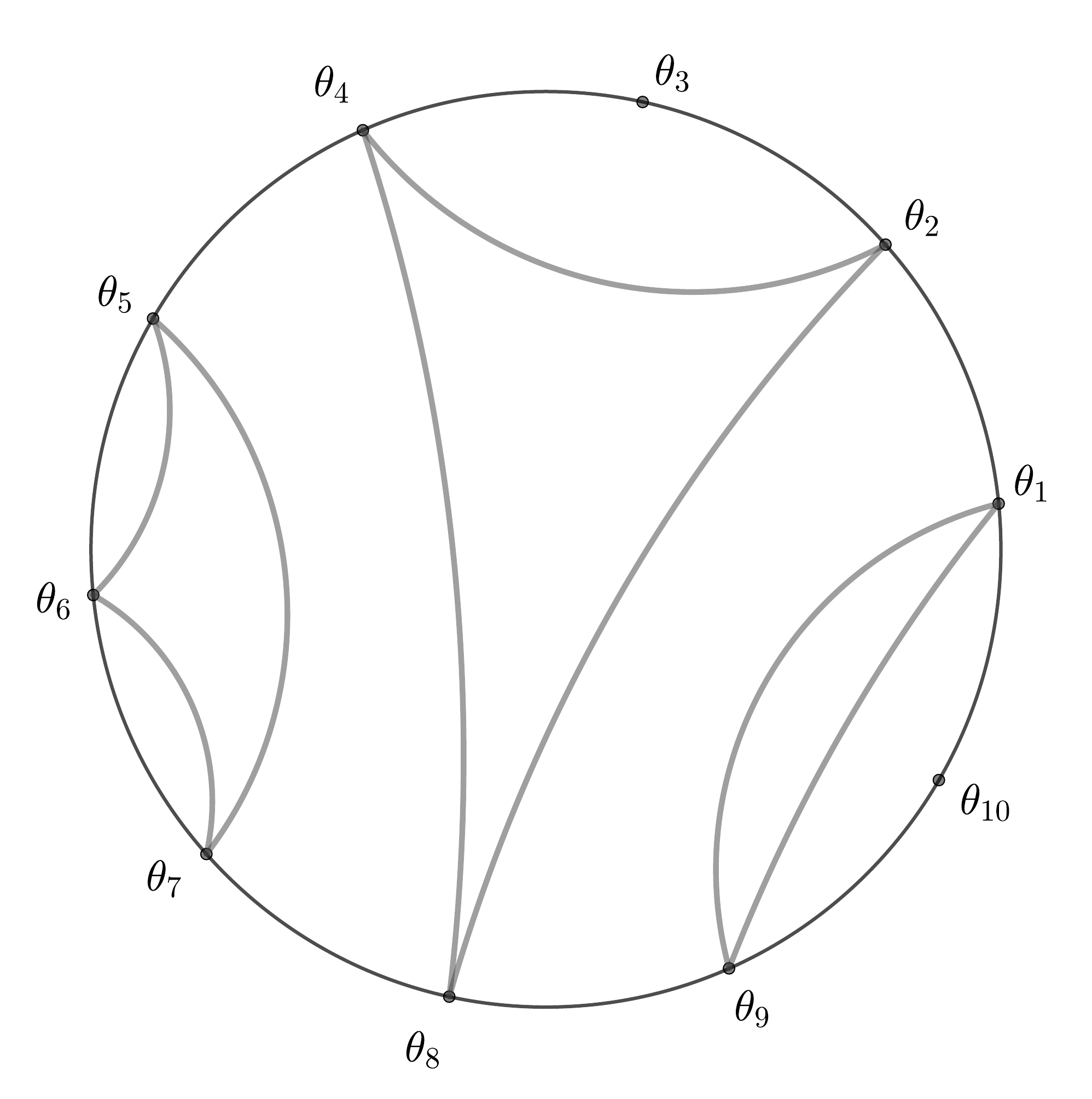}
 \caption{A non-crossing partition}
  \label{fig:NonCrossingPartition}
\end{subfigure}%
\begin{subfigure}{.5\textwidth}
  \centering
  \includegraphics[scale=.17]{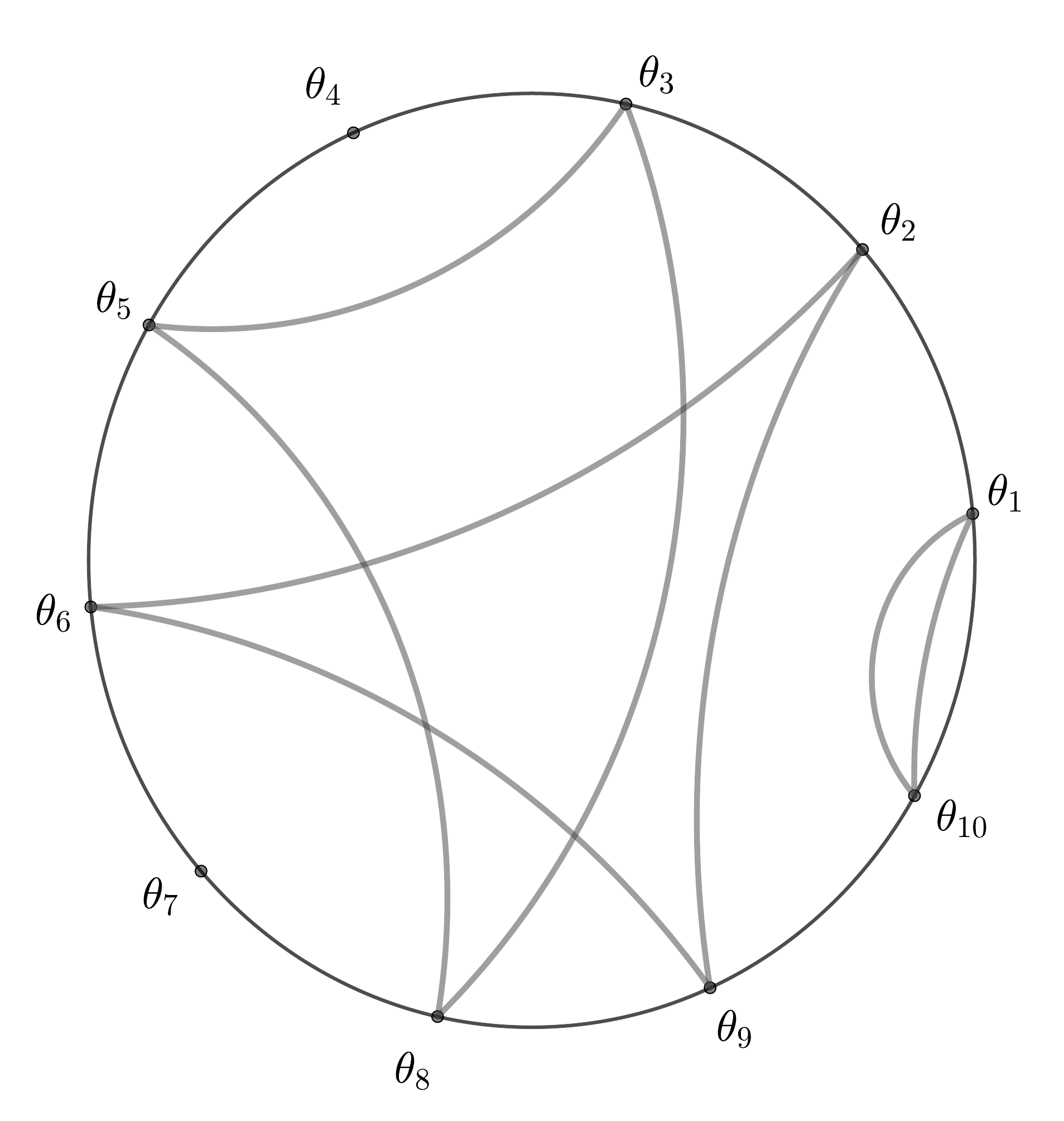}
  \caption{A crossing partition}
  \label{fig:CrossingPartition}
\end{subfigure}
\caption{Examples of crossing and non-crossing partitions for a set $\Theta$ of $10$ elements. Any choice of contours that represent the crossing partition have an intersection between at least two contours, and there is no contour representation without this property.}
\label{fig:Partitions}
\end{figure}

In a similar manner, we will encode the connections of finite points through a partition $\Pcal(\Ccal)$ of $\Ccal$. As said earlier, we want to define classes of sets $\Tcal$ for our Pólya-Tchebotarev problem, which involves connecting finite points and $\infty$.  In what follows, we talk about {\it connecting} points $c_i,c_j \in \Ccal$ through a set $\Gamma$, meaning that $c_i$ and $c_j$ belong to the same connected component of $\Gamma$. Likewise, we talk about connecting a point $c_i$ to an admissible sector $S_k$, meaning that $c_i$ belongs to a connected component of $\Gamma$ that stretches to $\infty$ along $S_k$. Finally, we also talk about connecting different sectors $S_k$ and $S_l$, meaning that there is a connected component of $\Gamma$ that stretches to $\infty$ in both sectors $S_k$ and $S_l$. 
	
The next step is to book-keep all such possible connections in a convenient notation, but before moving forward it is instructive to discuss the coming notions with an example.
	
		\begin{example}\label{example1}
		Fix a set of distinct points $\Ccal=\{c_1,\hdots, c_6\}$. Say that our external field determines exactly eight admissible sectors, the most trivial case being $\phi(z) = \Real z^8$. We define now a class of sets $\Tcal$ with the following features.
		
		First of all, a set $\Gamma\in \Tcal$ must connect points in $\Ccal$ with the following rules:
		\begin{itemize}
			\item The point $c_1$ is not necessarily connected to any other finite point.
			\item The points $c_2, c_3$ and $c_4$ are necessarily connected.
			\item The points $c_5$ and $c_6$ are necessarily connected.
		\end{itemize}
		
		In terms of the previously defined notation, these rules translate to fixing the partition 
		\[
		\Pcal(\Ccal) = \{ \{c_1\}, \{c_2,c_3,c_4\}, \{c_5,c_6 \} \}
		\] 
		which codify connections above - two points of $\Ccal$ are connected to each other if, and only if, they are elements of the same equivalence class defined by $\Pcal(\Ccal)$. See Figure \ref{fig:AdmContourExample_Img1} for an example of a set that satisfy above connections.
		\begin{figure}[t]
		\centering	\includegraphics[scale=0.4]{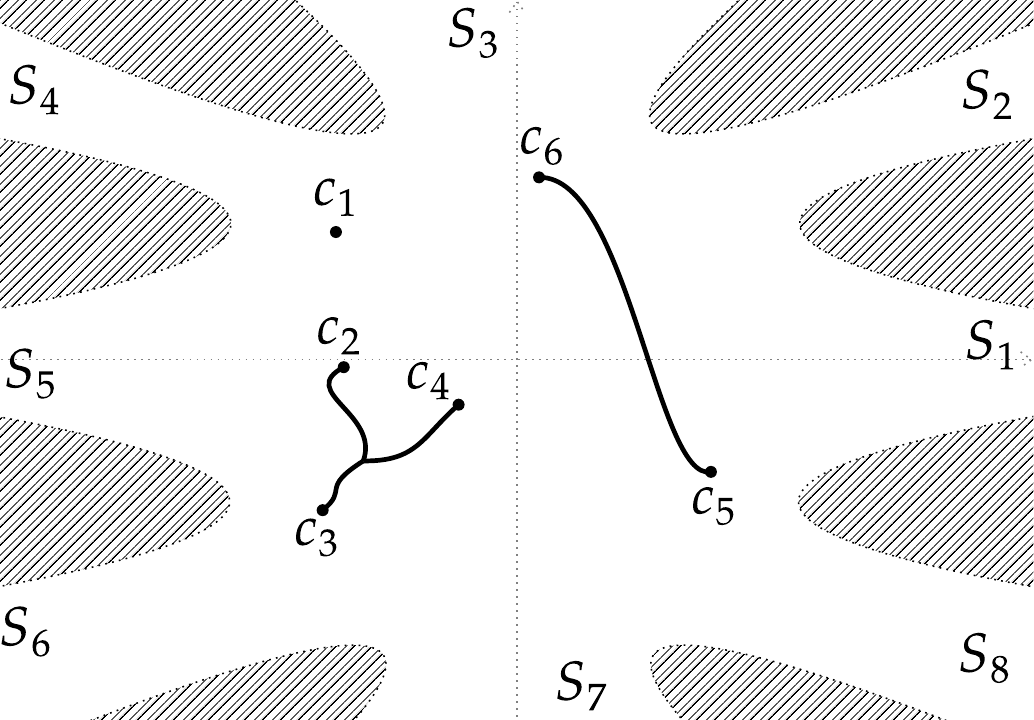}
		\caption{An example of a set $F$ with the connections between the fixed points.}
		\label{fig:AdmContourExample_Img1}
		\end{figure}
		
		Next, we determine the rules of connections of points in $\Ccal$ to $\infty$. Consider the following:
		\begin{itemize}
			\item The class $\{c_1\}$ must be connected to $\infty$ through sectors $S_3$ and $S_7$.
			\item The class $\{c_2,c_3,c_4\}$ must be connected to $\infty$ through sectors $S_1$ and $S_6$. 
			\item The class $\{ c_5, c_6\}$ is not necessarily connected to infinity.
		\end{itemize}
		We would like to encode these connections preserving the codification given by $\Pcal(\Ccal)$. We do this through a function $\Psi$, defined on $\Pcal(\Ccal)$, with the mapping rules
$$
\Psi(\{c_1\}) = \{ \theta_3, \theta_7 \},\quad \Psi(\{c_2,c_3,c_4\}) = \{\theta_1, \theta_6\}, \quad \text{and}\quad \Psi(\{c_5,c_6 \}) =  \emptyset,$$
so that in general $\Psi:\Pcal(\Ccal)\to 2^\Theta$.

	\begin{figure}[t]
		\centering	\includegraphics[scale=0.4]{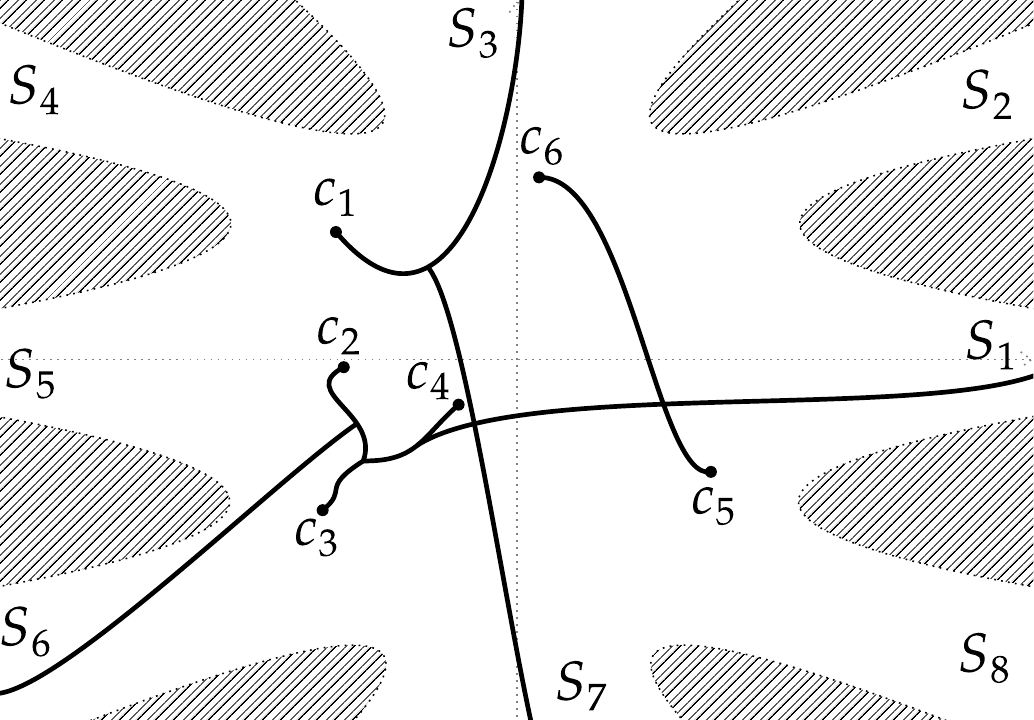}
		\caption{An example of a set $F$ with the connections between the fixed points, and connections to infinity through admissible sectors. Note that no matter how you try to connect following the described rules, a connection between $c_1$ and the points $c_2,c_3$ and $c_4$ is established.}
		\label{fig:AdmContourExample_Img2}
		\end{figure}
		
		Note that the connections made above imply that the point $c_1$ is connected to $c_2, c_3, c_4$ (see Figure \ref{fig:AdmContourExample_Img2}). This creates a problem in the construction, in the sense that partition $\Pcal(\Ccal)$ is not properly describing the connections between fixed points. To evade this mishap, we shall assume that 
		\begin{equation}\label{eq:Example1Cond1}
			\Pcal_\Psi \defeq \{ \Psi(A); A \in \Pcal(\Ccal)\} \text{ is a non-crossing partition of }\bigcup_{A \in \Pcal(\Ccal)} \Psi(A).
		\end{equation}
		 Initially this may seem to restrict our construction, but that is not really the case, one can restart the construction taking these considerations in mind. For example, with the choices made above, since $c_1$ will always be connected to $c_2$, $c_3$ and $c_4$, the correct associated partition is 
		\[
			\Pcal(\Ccal) = \{ \{c_1, c_2,c_3,c_4\}, \{c_5,c_6 \} \}
		\]
		with 
		\[
			\Psi(\{c_1,c_2,c_3,c_4\}) = \{ \theta_1, \theta_3, \theta_6, \theta_7 \}, \quad \Psi(\{c_5,c_6\}) = \emptyset. 
		\]
		Following the process above, one always finds a partition $\Pcal'(\Ccal)$ and a function $\Psi'$ defined on $\Pcal'(\Ccal)$ that satisfies condition \eqref{eq:Example1Cond1} and properly describes the connections.

		Finally, we would like to describe connections between different admissible sectors, not necessarily passing through fixed points, and we need to be careful to preserve the structure given by  $\Pcal(\Ccal)$ and $\Psi$. This will translate in another condition to be assumed on the construction. For example, assume that
		\begin{itemize}
			\item Sectors $S_1$ and $S_5$ are connected.
		\end{itemize}
		Then the class $\{c_1, c_2, c_3, c_4\}$ is connected to $S_5$ (through infinity), which is an information not contained in $\Psi$. Consider that
		\begin{itemize}
			\item Sectors $S_2$ and $S_5$ are connected.
		\end{itemize}
		While information above does not concern any of the admissible sectors already present on function $\Psi$, it implies that $\{c_1, c_2, c_3, c_4\}$ is connected to $S_2$ and $S_5$, which once again discredits $\Psi$. Finally, some redundancy can come just by the type of connections we would like to make, for example
		\begin{itemize}
			\item Sectors $S_2$ and $S_5$ are connected.
			\item Sectors $S_4$ and $S_8$ are connected.
		\end{itemize}
		This would imply that sectors $S_2, S_4, S_5$ and $S_8$ are connected. To properly handle this in a simple way we start by describing connections above as a partition $\Pcal(\Theta)$ of $\Theta$, where singleton sets corresponds to sectors that we do not connect, and equivalence classes represent connections, and assume that
		\begin{equation}\label{eq:Example1Cond2}
			\Pcal(\Theta) \text{ is a non-crossing partition of } \Theta \text{ and } \Pcal_\Psi \subset \Pcal(\Theta).
		\end{equation}
		For example, we could say that
		\begin{itemize}
			\item Sectors $S_4$ and $S_5$ are connected.
		\end{itemize}
		and thus the partition takes the form
		\[
			\Pcal(\Theta) = \{ \{\theta_1, \theta_3, \theta_6, \theta_7\}, \{\theta_2\}, \{\theta_4,\theta_5\}, \{\theta_8\}\}
		\]
		and satisfies \eqref{eq:Example1Cond2}. Once again, when representing connections by a partition $\Pcal(\Theta)$ that initially does not seem to satisfy $\eqref{eq:Example1Cond2}$,  one can rework from the beginning and find corresponding partitions $\Pcal'(\Ccal)$, $\Psi'$ and $\Pcal'(\Theta)$  that properly describe the connections being made and satisfy conditions \eqref{eq:Example1Cond1} and \eqref{eq:Example1Cond2}).
		
		Assuming \eqref{eq:Example1Cond1} and \eqref{eq:Example1Cond2}, we have the correct levels of codification on the sets $\Pcal(\Ccal)$, $\Pcal_\Psi$ and $\Pcal(\Theta)$. For example, if $\Ccal$ has $6$ elements and $\Theta$ has seven, putting 
		\begin{equation}\label{ex:partitions}
			\begin{split}
			\Pcal(\Ccal) &= \{\{c_1\}, \{c_2,c_3,c_4\}, \{ c_5,c_6\}\} \\
			\Psi(\{c_1\}) &= \{\theta_4\}, \quad \Psi(\{c_2,c_3,c_4\}) =\emptyset ,\quad  \Psi(\{ c_5,c_6\}) = \{\theta_3\}  \\
			\Pcal(\Theta) &= \{\{\theta_1\}, \{\theta_2\}, \{\theta_3\} ,\{\theta_4\}, \{\theta_5, \theta_6\}, \{\theta_7\} \}
			\end{split}
		\end{equation}
		one readily sees that \eqref{eq:Example1Cond1} and \eqref{eq:Example1Cond2} are satisfied.
		Generically, sets of $\Tcal$ are as in Figure \ref{fig:Example}.
		
		\begin{figure}[ht!]
		\centering	\includegraphics[scale=0.4]{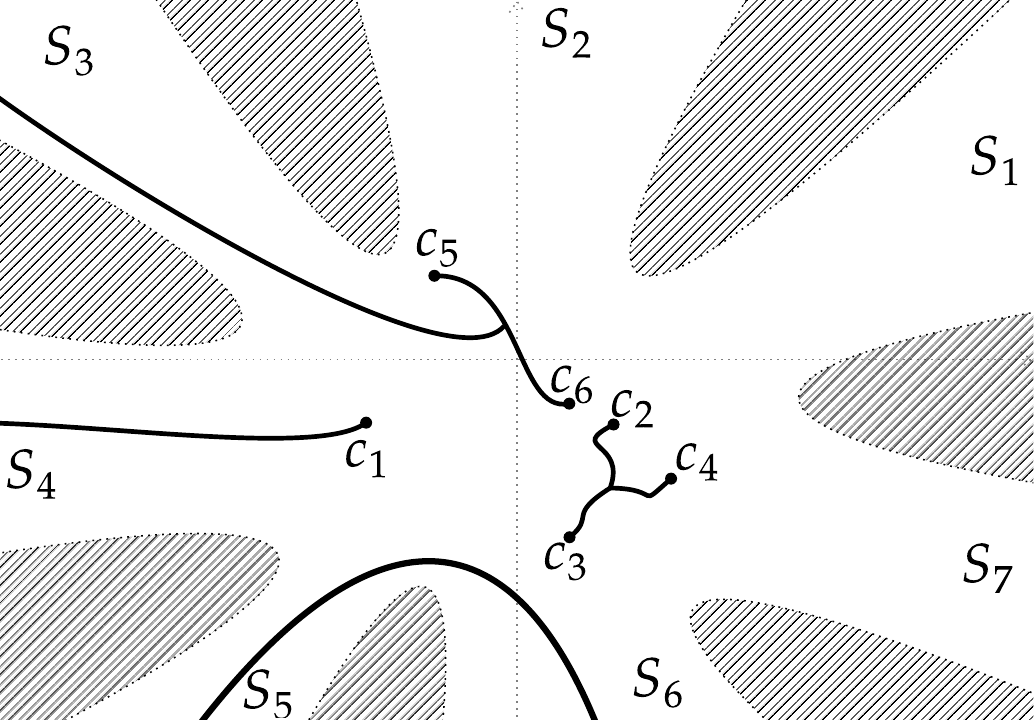}
		\caption{A generic element of $\Tcal$ for the codification given by \eqref{ex:partitions}.}
		\label{fig:Example}
		\end{figure}
	\end{example}

Hence, in the previous example we determined $\Tcal$ through the triple $(\Pcal(\Ccal),\Psi,\Pcal(\Theta_\infty))$.  We now proceed to the discussion needed for the general definition.
	
We book-keep subcollections of points of $\Ccal$ that will be necessarily connected by contours in our Pólya-Tchebotarev Problem by the classes of a given partition $\Pcal(\Ccal)$ of $\Ccal$, then two points are connected by a contour if, and only if, they are elements of the same partition. 

Next, we need to specify which points of $\Ccal$ will be enforced to connect to $\infty$, along prescribed sectors. To that end, we represent the connections of finite fixed points to $\infty$ by a fixed function $\Psi: \Pcal(\Ccal) \to 2^\Theta$, which identifies which points in $\Ccal$ are connected to $\infty$ through which sector in the sense that there is a contour connecting $c$ to infinity through sector $S_j$ if, and only if, $\theta_j$ is an element of $\Psi(A)$, where $A$ is the equivalence class of $c$ on $\Pcal(\Ccal)$. 

 We assume that
	\begin{align}
			& \Psi(\{w\}) \neq \emptyset \text{, for all singleton sets } \{w\} \in \Pcal(\Ccal),\qquad \text{and} \label{CondPsi1}\\
			& \Psi(A) \cap \Psi(B) \neq \emptyset \text{ implies } A = B. \label{CondPsi2}
	\end{align}	
Both conditions are placed to avoid redundancies. Condition \eqref{CondPsi1} means that if we have a fixed point not connected to any other fixed point, it must be connected to infinity through some admissible sector. We enforce this condition otherwise one could remove $w$ from the family of fixed points.  Condition \eqref{CondPsi2} means that we do not have different equivalence classes of finite points being enforced to connect to each other through infinity through the same sector, otherwise we should include then in the same class in $\Pcal(\Ccal)$ from the very beginning.

As the key property, we ask that
\begin{equation}\label{eq:CondPsi3}
\Pcal_\Psi\defeq \left\{ \Psi(A); A \in \Pcal(\Ccal) \right\} \text{ is a non-crossing partition of } \bigcup_{A \in \Pcal(\Ccal)} \Psi(A).
\end{equation}
This solidifies the codification nature of $\Pcal(\Ccal)$ (see Example \ref{example1}): by removing this condition one can create examples of $\Psi$ which enforces connections between fixed points in different classes of $\Pcal(\Ccal)$.

We have yet to describe connections between admissible sectors not necessarily passing through any prescribed point in $\Ccal$, and for that we need to talk about another notion. 

We say that a partition $\Pcal(\Theta)$ is {\it subordinated} to the function $\Psi$ if $\Pcal(\Theta)$ is a non-crossing partition of $\Theta$ with $\Pcal_\Psi\subset \Pcal(\Theta)$. The classes of $\Pcal_0(\Theta) \setminus \Pcal_\Psi$ codify connections between admissible sectors not necessarily containing a fixed point, where two angles $\theta_i$ and $\theta_j$ are in the same class if, and only if, there is a contour stretching to infinity on $S_j$ and $S_k$.
		
	With the previous notations we have the necessary objects to define the class of admissible sets.
	
	\begin{definition}\label{def:AdmissibleClass} Fix a potential $\phi=\re\Phi$ as in \eqref{eq:PhiDef} with corresponding set of admissible angles $\Theta$ as described in \eqref{eq:deffsecSj} {\it et seq.}, and a finite collection of points $\Ccal$ as in \eqref{eq:deffC}.
	
	Let $\Pcal(\Ccal)$ be a partition of $\Ccal$, $\Psi:\Pcal (\Ccal)\to 2^\Theta$ a function satisfying (\ref{CondPsi1}), (\ref{CondPsi2}) and \eqref{eq:CondPsi3}, and $\Pcal(\Theta)$ a partition of $\Theta$ subordinated to $\Psi$. Assume that $\Ccal \neq \emptyset$ or $\Pcal_0(\Theta) \neq \emptyset$. We call $(\Pcal(\Ccal), \Psi, \Pcal(\Theta))$ an {\it admissible triple}, and associate to it a collection of contours $\Tcal(\Pcal(\Ccal), \Psi, \Pcal(\Theta))$, referred to as {\it admissible family} as follows. Each set $\Gamma\in \Tcal(\Pcal(\Ccal), \Psi, \Pcal(\Theta))$ satisfies the conditions:
			\begin{enumerate}[(i)]
				\item $\Gamma$ is a finite union of piecewise smooth Jordan arcs, possibly unbounded but satisfying \eqref{eq:GrowthCondGamma}, and with $\Ccal \subset \Gamma$.
				\item For each $A \in \Pcal(\Ccal)$, there exists a connected component $\Gamma_A$ of $\Gamma$ that contains each point $c \in A$, and stretches to infinity in the sector $S_j$ for each $\theta_j \in \Psi(A)$.
				\item For each $B \in \Pcal_0(\Theta) \setminus \Pcal_\Psi$, there exists a connected component $\Gamma_B$ of $\Gamma$ such that $\Gamma_B$ stretches to infinity through $S_j$ for each $\theta_j \in B$.
				\item There exists $R = R(\Gamma)$ such that
				\[
					\Gamma \cap (S_j \setminus D_R) = \emptyset, \quad \text{ for each } \{\theta_j\} \in \Pcal(\Theta) \setminus \Pcal_\Psi;
				\] 
				\item $\Gamma$ has at most $|\Pcal(\Ccal)| + |\Pcal_0(\Theta) \setminus \Pcal_\Psi|$ connected components. Each one of them contains a contour $\gamma$ that satisfies at least one of the following:
				\begin{itemize}
			\item 	it contains two fixed points;
			\item it contains a fixed point and stretches to infinity through an admissible sector;
			\item it stretches to infinity through two admissible sectors.
\end{itemize}	
			\end{enumerate}
	\end{definition}
	
As a slight abuse of notation, we often write $\Tcal$ and omit the triple $\Tcal(\Pcal(\Ccal),\Psi,\Pcal(\Theta))$ from our notation, and refer to $\Tcal$ as a class of {\it admissible contours} rather than sets, to remind the reader that sets in $\Tcal$ are obtained as unions of contours.
	
	 Some expressions will be used to remove the technicality of the language of partitions given above:
\begin{itemize}
	\item $\Tcal$ \emph{connects infinity through $S_j$ and $S_k$} if $\theta_j$ and $\theta_k$ are in the same equivalence class of $\Pcal(\Theta)$. 
	\item  $\Tcal$ \emph{connects a fixed point $c$ to infinity through $S_j$} if $\theta_j \in \Psi(A)$ where $A$ is the equivalence class of $c$ in $\Pcal(\Ccal)$.
	\item  $\Tcal$ \emph{connects infinity} if any of the above is satisfied.
	\item $\Tcal$ \emph{connects $c_1$ and $c_2$} when $c_1$ and $c_2$ are in the same equivalence class of $\Pcal(\Ccal)$.
\end{itemize}

	\begin{example}\label{example:classicalchebotarev}
		The weighted Pólya-Tchebotarev's problem for a finite set $\Ccal$ can be obtained by setting $\Pcal(\Ccal) = \{\Ccal\}$, $\Psi(\Ccal) = \emptyset$ and $\Pcal_0(\Theta_\infty) = \emptyset$. 
	\end{example}
	
	\begin{example}\label{example:polchebotarev}
		Consider $\Phi(z) = V(z)$ where $V$ is a polynomial of degree $N \geq 2$, and $\Ccal = \emptyset$.  This case is exactly the one tackled in \cite{Kuijlaars2015}. Of course, class $\Tcal$ is defined only in terms of $\Pcal_0(\Theta)$.
	\end{example}

%% MAIN RESULT
	\subsection{The main theorem}	
	
With the data given by the external field $\phi=\re\Phi$ as in \eqref{eq:PhiDef}, a collection of fixed points $\Ccal$ as in \eqref{eq:deffC}, and the class of contours $\Tcal$ for an admissible triple $(\Pcal(\Ccal),\Psi,\Pcal(\Theta))$, the associated {\it Pólya-Tchebotarev problem}, or \emph{max-min energy problem} asks for finding a set $\Gamma_0 \in \Tcal$ that maximizes the energy functional $\I^\phi$ on $\Tcal$, that is, finding $\Gamma_0$ such that
\begin{equation}\label{eq:genPC}
		\I^\phi(\Gamma_0) 
			= \max_{\Gamma \in \Tcal} \I^\phi(\Gamma)
%			= \max_{\Gamma \in \Tcal} \I^\phi(\mu_*(\phi,\Gamma)) 
			= \max_{\Gamma \in \Tcal} \min_{\mu \in \Mcal_1^\phi(\Gamma)} \I^\phi(\mu).
\end{equation}
	
Our main result is the following:
	\begin{theorem}\label{maintheorem}
	The max-min energy problem always has a solution $\Gamma_0 \in \Tcal$. Furthermore, $\Gamma_0$ and its equilibrium measure $\mu_0=\mu_0(\phi,\Gamma_0)$ satisfy the following properties
			\begin{enumerate}[(1)]
			\item $\Gamma_0$ is a finite union of analytic arcs.
			\item The function
			$$
			R(z) \defeq \left(  \CT^{\mu_0}(z) + \Phi'(z)\right)^2, \quad z \in \C \setminus \supp \mu_0,
			$$
			extends to a rational function on $\C$. Its simple poles belong to the set of fixed points $\Ccal$, its double poles coincide with points $w_1, \ldots, w_{n_L}$, and all its other poles have even order, always at least $4$, and coincide with the poles of the rational part of $\Phi$.
			\item The equilibrium measure $\mu_0$ is supported on a finite union of analytic arcs that are critical trajectories of the quadratic differential $\varpi=-R(z) \dd z^2$, it is absolutely continuous in each analytic subarc of $\Gamma_0$ with respect to the arc-length measure, with density
				$$
					\dd \mu_0(s) = \frac{1}{\pi \ii} R_+(s)^{1/2} \, \dd s.
				$$
			Furthermore, the simple poles of $R$ are necessarily in $\supp\mu_0$, and the remaining poles of $R$ do not belong to $\supp\mu_0$. 
			\end{enumerate}
	\end{theorem}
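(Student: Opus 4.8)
## Proof Proposal

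The plan is to follow Rakhmanov's scheme for max-min problems, as carried out by Kuijlaars and the second author in \cite{Kuijlaars2015}, adapting each step to accommodate the semiclassical singularities and the fixed-point constraints encoded in the admissible triple. The argument splits naturally into three stages: (a) existence of a maximizer $\Gamma_0$; (b) structural description of $\Gamma_0$ and its equilibrium measure via the associated quadratic differential; (c) identification of the poles of $R(z)$.

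\textbf{Stage (a): Existence.} First I would show that the supremum in \eqref{eq:genPC} is finite and attained. Finiteness follows because the growth condition \eqref{eq:GrowthCondGamma} along the admissible sectors forces $\I^\phi(\Gamma)$ to be bounded above uniformly over $\Gamma\in\Tcal$; the lower bound coming from the external field near the poles of $\Phi$ must also be controlled, which is where the assumption $\Ccal\cap\Zcal=\emptyset$ and the form of $L$ in \eqref{eq:formL} enter. For attainment, I would take a maximizing sequence $\Gamma_n\in\Tcal$ with equilibrium measures $\mu_n=\mu(\phi,\Gamma_n)$. Passing to a weak-$*$ convergent subsequence $\mu_n\rightharpoonup\mu_*$, one shows the limit measure $\mu_*$ is an $A$-critical measure (in the sense of Martínez-Finkelshtein--Rakhmanov) for an appropriate class $A$ of admissible variations — those that respect the connectivity constraints and fix the points in $\Zcal$. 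The support $\Gamma_0\defeq\supp\mu_*$, suitably completed by the trajectories needed to maintain connectedness, is then the candidate maximizer; verifying $\Gamma_0\in\Tcal$ requires checking that the prescribed connections and sector-stretching survive the limit, using the $R$-uniform control from Definition~\ref{def:AdmissibleClass}(iv). Upper semicontinuity of $\I^\phi$ under this convergence (the principle of descent for the logarithmic term, plus continuity of the external-field term on compact pieces) closes the argument.

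\textbf{Stage (b): Structure from criticality.} The heart of the proof is the observation that a critical measure $\mu_0$ for the relevant family of local variations satisfies, for the function $R(z)\defeq(\CT^{\mu_0}(z)+\Phi'(z))^2$, the key identity that $R$ has no branch points off $\supp\mu_0$ and that its only possible singularities are at $\Ccal\cup\Zcal\cup\{\infty\}$. Concretely: on $\C\setminus(\supp\mu_0\cup\Zcal)$, $R$ is analytic; across the regular part of $\supp\mu_0$ the $\pm$-boundary values of $\CT^{\mu_0}$ differ by $-2\pi\ii\mu_0'$, and the critical-measure (equilibrium-on-its-support) condition forces $R_+=R_-$ there, so $R$ extends analytically across those arcs. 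Near each $z_j\in\Zcal$, the local behavior of $\Phi'$ — a pole of order $m_j+1$ for a pole of $\Phi$, a simple pole with non-real residue $\rho_j$ for a logarithmic point — dictates the singularity type of $R$: squaring a simple pole with residue $\rho_j\notin\R$ gives a genuine double pole (no cancellation against the Cauchy transform, precisely because $\rho_j\notin\R$), while squaring a pole of order $m_j+1\geq 2$ gives an even-order pole of order $2(m_j+1)\geq 4$. At fixed points $c_i\in\Ccal$, the variational condition allows $\CT^{\mu_0}$ a simple pole (the constraint "anchor" there is not free to move), producing at worst a simple pole of $R$. At $\infty$, $\CT^{\mu_0}(z)\sim -1/z$ and $\Phi'$ is a rational function with a pole of order $N+1\ge 2$ coming from $P/Q$, so $R$ is meromorphic there. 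Hence $R$ is a rational function with exactly the stated pole structure. Then $\dd\mu_0=\frac{1}{\pi\ii}R_+^{1/2}\dd s$ follows from $\CT^{\mu_0}_\pm=\mp\pi\ii\mu_0'+\text{(real part)}$ on the support, and the support lies on critical trajectories of $\varpi=-R(z)\,\dd z^2$ because along $\supp\mu_0$ one has $R(z)\,\dd z^2\le 0$, i.e. $R_+(s)^{1/2}\dd s\in\ii\R$. Analyticity of the arcs is then automatic since trajectories of a quadratic differential with rational coefficients are analytic away from zeros and poles.

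\textbf{Stage (c): Which poles lie on the support.} Finally I would argue that the simple poles of $R$ (the fixed points $c_i$ that are genuine endpoints, with $\CT^{\mu_0}$ actually singular there) must lie in $\supp\mu_0$: if $c_i\notin\supp\mu_0$ then near $c_i$ the function $\CT^{\mu_0}$ is analytic, so $R$ would be analytic there, a contradiction. Conversely, the double poles (logarithmic points) and higher even-order poles (poles of the rational part) cannot lie on $\supp\mu_0$: near such a point the density $\frac{1}{\pi\ii}R_+^{1/2}$ would be non-integrable (behaving like $|z-z_j|^{-1}$ or worse), violating the finite-energy requirement $\mu_0\in\Mcal_1^\phi(\Gamma_0)$; this is exactly why the admissible contours are required to avoid $\Zcal$.

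\textbf{Main obstacle.} I expect the principal difficulty to be Stage (a), specifically proving that the weak-$*$ limit of equilibrium measures along a maximizing sequence is supported on a set that genuinely lies in the class $\Tcal$ — that no mass escapes to infinity in a forbidden sector, that the mandated connections are not lost in the limit, and that the limiting support can be completed to a bona fide admissible contour without increasing the energy. Handling this requires a careful compactness argument on the space of admissible contours (with the partition data $(\Pcal(\Ccal),\Psi,\Pcal(\Theta))$ fixed), together with a gluing construction that adds only zero-capacity "connective tissue" (short arcs along trajectories of $\varpi$) to restore connectedness; controlling that this tissue does not lower $\I^\phi$ is the delicate point, and is where the non-crossing hypotheses \eqref{eq:CondPsi3} and the subordination of $\Pcal(\Theta)$ are essential — they guarantee the connective arcs can be drawn without creating unwanted intersections that would force extra mergers of components.
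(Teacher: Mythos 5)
Your Stages (b) and (c) are essentially correct and align with the paper's treatment (Propositions \ref{prop:QuadraticEquation}, \ref{QuadDiffChar}, and Corollary \ref{cor:muIscompact}). Stage (a), however, contains a genuine gap, and the paper's existence argument is structurally different from what you propose. You take a maximizing sequence $\Gamma_n$, pass to a weak-$*$ convergent subsequence of equilibrium measures $\mu_n\rightharpoonup\mu_*$, assert that $\mu_*$ is a critical measure, and invoke ``upper semicontinuity of $\I^\phi$ under this convergence (the principle of descent...).'' But the principle of descent gives \emph{lower} semicontinuity of the energy under weak-$*$ convergence of measures, i.e.\ $\I^\phi(\mu_*)\le\liminf\I^\phi(\mu_n)$, which points in exactly the wrong direction for the maximization: it would give $\I^\phi(\mu_*)\le\sup_\Tcal\I^\phi$, and you need $\ge$. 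Likewise, it is unjustified that a weak-$*$ limit of equilibrium measures taken over a \emph{varying} family of contours is critical. The tool the paper uses is Rakhmanov's Theorem (\Cref{RakhmanovTheorem}), which asserts upper semicontinuity of the set-valued energy functional $F\mapsto\I^\phi(F)$ with respect to the \emph{Hausdorff metric on sets}. The paper therefore works in the Hausdorff closure $\Fcal_M$ of a suitable truncated subclass $\Tcal_M$ (chosen so that contours avoid the deep wells of $\phi$, \Cref{prop:EquivalentMaxMin}), obtains a maximizer $F_0\in\Fcal_M$ by compactness, and then derives criticality of $\mu_0=\mu(\phi,F_0)$ via a perturbation argument on \emph{sets} (\Cref{lem:MaximizationVar} plus \Cref{teo:SummaryOfCriticalSection}), not by passing criticality through a weak-$*$ limit of measures.

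Separately, your reconstruction of $\Gamma_0$ by ``gluing short arcs along trajectories of $\varpi$'' is not what is needed and not what the paper does. The paper connects $\supp\mu_0$ to the fixed points in $\Ccal$ and to the prescribed sectors by arcs inside the closure of $\Lambda\defeq\{z:\U^{\mu_0}(z)+\phi(z)>\ell_0\}$; the crucial point is that any set $\Gamma_0\subset\overline\Lambda\cup\supp\mu_0$ containing $\supp\mu_0$ still has $\mu_0$ as its equilibrium measure, since the Euler--Lagrange variational inequality persists there. These connecting arcs need not (and generally do not) lie along $\varpi$-trajectories. Verifying that the necessary connections can actually be made within $\overline\Lambda\cup\supp\mu_0$ is the content of \Cref{lem:Lambdaj} and is the technical heart of the reconstruction step; it relies on the Hausdorff convergence $\Gamma_n\to F_0$ of sets and on the sector structure of $\Lambda$ near infinity (\Cref{LambdaBehaviour}), none of which appear in your sketch.
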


As a consequence of the discussion in Examples~\ref{example:classicalchebotarev} and \ref{example:polchebotarev}, Theorem~\ref{maintheorem} includes both the Classical Pólya-Tchebotarev Problem and the max-min considered in \cite{Kuijlaars2015} as particular cases.

As mentioned earlier, max-min problems are intimately connected with the zeros distribution of non-hermitian orthogonal polynomials. In this regard, a standard calculation shows that the measure $\mu_0$ satisfies the property
$$
\frac{\partial}{\partial \eta_+}\left(\U^{\mu_0}(z)+ \phi(z)\right)=\frac{\partial}{\partial \eta_-}\left(\U^{\mu_0}(z)+\phi(z)\right),
$$
on subarcs of $\supp\mu_0$, where $\eta_\pm$ are the two unit vectors normal to $\supp\mu_0$ at the point $z\in \supp\mu_0$. This property is known as the {\it S-property} in orthogonal polynomial theory.

	\subsection{Strategy}
	
			In \cite{Rakhmanov2012} Rakhmanov presented an ``ideal situation'' under which a max-min weighted log energy problem admits a solution. However, such ideal situation is not always in place in many important concrete situations. In fact, an example of the type of problems for which his approach does not apply directly was mentioned already in \cite{Rakhmanov2012}, in the context of non-hermitian orthogonality with polynomial external fields. His approach was then adapted to this situation by Kuijlaars and the second-named author in \cite{Kuijlaars2015}, and our approach here follows their footsteps. When compared with \cite{Kuijlaars2015}, many of the technical lemmas need adaptation due to the fact that we now have to deal with singularities of $\Phi$ at finite points, not only in $\infty$. We highlight the distinctions along the way of our proofs.
		
		To explain Rakhmanov's approach, define the hyperbolic metric on $\overline{\C}$ as the pull-back of the Euclidean metric on $T(\overline{\C}) \subset \R^3$, where $T$ is the homeomorphism
		$$
			T(z) \defeq \left( \frac{\Real z}{1 + |z|^2}, \frac{\Img z}{1 + |z|^2}, \frac{|z|^2}{1 + |z|^2} \right), \quad z \in \C,
		$$
		with $T(\infty) = (0,0,1)$. The image of $T$ is the sphere centered at $(0,0,\frac{1}{2})$ with radius $\frac{1}{2}$. For points $z,w \in \C$ we have the following explicit formula in terms of the usual Euclidean norm of $\C$,
		$$
			d_H(z,w) = \frac{|z-w|}{\sqrt{1 + |z|^2} \sqrt{1 + |w|^2}}.
		$$
		In addition, for $K\subset \overline{\C}$ set
		$$
		(K)_\delta = \left\{ x \in \overline{\C}; \; d_H(x,K)=\sup_{y\in K}d_H(x,y) < \delta \right\}.
		$$
		The Hausdorff metric $d_H(K_1,K_2)$ on compacts $K_1,K_2 \subset \overline{\C}$ induced by $d_H$ is
		$$
			d_H(K_1,K_2)  
				= \inf \left\{ \delta > 0;\; K_1 \subset (K_2)_\delta \text{ and } K_2 \subset (K_1)_\delta \right\}.
		$$
		The Hausdorff metric above naturally induces a metric on the closed sets of $\C$, given by the distance between their closures on $\overline{\C}$. 
		
		Now let $\Fcal$ be a family of closed sets on $\C$. Assume in addition that the number of connected components of the elements of $\Fcal$ is uniformly bounded by a finite value. Equip $\Fcal$ with the Hausdorff metric just mentioned. 
		\begin{theorem}[\protect{\cite[Theorem 3.2]{Rakhmanov2012}}]\label{RakhmanovTheorem}
		Suppose that $\phi=\re\Phi$ is harmonic in $\overline\C$, except at finitely many points. Then, the energy functional $\I^\phi: \Fcal \to [-\infty, +\infty]$ is upper semicontinuous.
		\end{theorem}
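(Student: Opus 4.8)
The plan is to prove upper semicontinuity in its sequential form: whenever $F_n\to F$ in the Hausdorff metric $d_H$ on $\overline\C$, then $\limsup_{n}\I^\phi(F_n)\le\I^\phi(F)$. If $\I^\phi(F)=+\infty$ there is nothing to prove, so I would assume $\I^\phi(F)<+\infty$ and fix, for a given $\epsilon>0$, a near-minimizer $\mu\in\Mcal_1^\phi(F)$ with $\I^\phi(\mu)<\I^\phi(F)+\epsilon$. The whole argument then reduces to a transport statement: for all large $n$, construct $\mu_n\in\Mcal_1^\phi(F_n)$ with $\mu_n\wconv\mu$ on $\overline\C$ and $\limsup_n\I^\phi(\mu_n)\le\I^\phi(\mu)$. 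Indeed, granting this, $\I^\phi(F_n)\le\I^\phi(\mu_n)$ gives $\limsup_n\I^\phi(F_n)\le\I^\phi(\mu)<\I^\phi(F)+\epsilon$, and letting $\epsilon\downarrow0$ finishes. Before transporting I would normalize $\mu$: a finite-energy measure has no atoms, so $\mu(\Zcal)=0$, and restricting $\mu$ to $D_R$ minus a shrinking neighborhood of $\Zcal$ and renormalizing (dominated convergence for $\I^\phi$) lets me assume that $K\defeq\supp\mu$ is a compact subset of $\C\setminus\Zcal$, so that $\phi$ is continuous and bounded on a neighborhood of $K$.

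For the construction, write $\epsilon_n\defeq d_H(F_n,F)\to0$, and partition $K$ into finitely many Borel pieces $B$ of diameter $\le\epsilon_n$. For each piece $B$ with $\mu(B)>0$, Hausdorff closeness gives $F_n\cap(B)_{\epsilon_n}\neq\emptyset$; I would deposit the mass $\mu(B)$ near $B$ by spreading it, according to its normalized Robin measure, over a sub-continuum of diameter comparable to $\epsilon_n$ of a connected component of $F_n$ passing within $O(\epsilon_n)$ of $B$. The place where the bound $k$ on the number of components is essential is that such a ``thick'' component is always available: the components of $F$ carrying positive $\mu$-mass are nondegenerate continua, and if the at most $k$ components of $F_n$ meeting a neighborhood of such a continuum all had vanishing diameter they could not be $\epsilon_n$-dense along it, since a connected set covered by $N$ closed sets of diameter $\le\eta$ has diameter $\le N\eta$. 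Thus, after redistributing the negligible mass sitting near the finitely many small (possibly point) components, each $\mu(B)$ is deposited as claimed. The resulting $\mu_n$ is a probability measure on $F_n$, atomless and of finite logarithmic energy, moving mass by at most $O(\epsilon_n)\to0$, hence $\mu_n\wconv\mu$.

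The energy estimate then has two parts. The external field is easy: $\phi$ is uniformly continuous on a compact neighborhood of $K$ disjoint from $\Zcal$, the transport displacement is $o(1)$, and $\mu_n\wconv\mu$, so $\int\phi\,\dd\mu_n\to\int\phi\,\dd\mu$. For the logarithmic energy, fix $\delta>0$ and split the double integral over $\{|x-y|\ge\delta\}$ and $\{|x-y|<\delta\}$. Once $\epsilon_n$ is small, the contribution to $\I(\mu_n)$ from pieces $B,B'$ at mutual distance $\ge\delta$ has kernel bounded above by $\log(3/\delta)$, so dominated convergence yields its limit $\iint_{|x-y|\ge\delta}\log\frac{1}{|x-y|}\,\dd\mu(x)\,\dd\mu(y)$. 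The remaining, ``local'' contribution — from pieces $B,B'$ within $O(\delta)$ of each other, including $B=B'$ — is bounded, by the way the $\mu_n$ were built, by a constant times $\log(1/\epsilon_n)\cdot(\mu\times\mu)\bigl(\{|x-y|\le C\delta\}\bigr)$; since $\log\frac{1}{|x-y|}\ge\log\frac{1}{C\delta}$ on that set, this is at most a constant times $\iint_{|x-y|\le C\delta}\log\frac{1}{|x-y|}\,\dd\mu\,\dd\mu$, which tends to $0$ as $\delta\downarrow0$ because $\I(\mu)<\infty$. Letting $n\to\infty$ and then $\delta\downarrow0$ gives $\limsup_n\I(\mu_n)\le\I(\mu)$, hence $\limsup_n\I^\phi(\mu_n)\le\I^\phi(\mu)$, which is the required transport statement.

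The step I expect to be the main obstacle is the geometric input in the second paragraph: making precise that Hausdorff closeness together with the bound on the number of components forces $F_n$ to contain, near every piece of the continua carrying $\mu$, a continuum thick enough (diameter $\gtrsim\epsilon_n$, hence capacity $\gtrsim\epsilon_n$) to absorb the transported mass with logarithmic energy only $O(\log(1/\epsilon_n))$, while keeping the total displacement $o(1)$ and creating no atoms. That the component bound cannot be dropped is visible from the example of $n$ equally spaced points converging to a segment: there $\I^\phi\equiv+\infty$ while the limit has finite energy. A minor point — that no mass of $\mu_n$ escapes to $\infty$ and that $\mu_n$ meets the growth condition \eqref{eq:GrowthCond} — is handled by working throughout on the sphere with $d_H$ and by the compact-support normalization of $\mu$.
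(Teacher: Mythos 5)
The paper does not actually prove this statement: Theorem~\ref{RakhmanovTheorem} is quoted verbatim from \cite[Theorem~3.2]{Rakhmanov2012} and used as a black box, so there is no internal proof to compare your attempt against. On its own terms, your strategy — pass mass from a near-optimal $\mu\in\Mcal_1^\phi(F)$ onto the approximating sets $F_n$, control the energy from above, and invoke the uniform bound on the number of components to guarantee that $F_n$ is locally ``thick enough'' to absorb the mass — is the right one, and indeed the bound on components is used exactly where you say it must be (your point-mass example correctly shows it cannot be dropped).

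There is, however, a genuine gap in the logarithmic energy estimate. You bound the ``local'' part of $\I(\mu_n)$, coming from pairs of pieces $B,B'$ within $O(\delta)$, by a constant times $\log(1/\epsilon_n)\cdot(\mu\times\mu)(\{|x-y|\le C\delta\})$, and then assert this is $\lesssim\iint_{|x-y|\le C\delta}\log\frac{1}{|x-y|}\,\dd\mu\,\dd\mu$ because $\log\frac{1}{|x-y|}\ge\log\frac{1}{C\delta}$ there. But what this comparison actually yields is a bound with prefactor $\log(1/\epsilon_n)/\log(1/(C\delta))$, which is not a constant: with $\delta$ held fixed (as your ``let $n\to\infty$, then $\delta\downarrow0$'' order requires), this prefactor diverges as $n\to\infty$, so $\limsup_n$ of your local bound is $+\infty$ whenever $(\mu\times\mu)(\{|x-y|\le C\delta\})>0$, and the argument gives nothing. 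The issue is that the uniform kernel bound $\log(C/\epsilon_n)$ (coming from the Robin constants of the $\epsilon_n$-scale continua) is only appropriate for pairs $B,B'$ at mutual distance $\lesssim\epsilon_n$; for pairs at distance between $K\epsilon_n$ and $\delta$, the transported mutual energy is controlled by $\log(C/d(B,B'))$ instead. A three-scale split fixes this: pairs with $d(B,B')\lesssim\epsilon_n$ contribute at most $\log(C/\epsilon_n)\,(\mu\times\mu)(\{|x-y|\lesssim\epsilon_n\})$, which tends to $0$ with $n$ precisely by the comparison you invoked (now applied at the correct scale $\epsilon_n$, not $\delta$); pairs with $K\epsilon_n\lesssim d(B,B')<\delta$ contribute at most $\iint_{|x-y|\lesssim\delta}\log\frac{C}{|x-y|}\,\dd\mu\,\dd\mu$, uniformly in $n$, which vanishes as $\delta\downarrow0$; and the remainder is handled by dominated convergence as you wrote. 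Without separating the $\epsilon_n$-scale from the $\delta$-scale, the step as written fails.
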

		
		If the class $\Fcal$ is closed and $\I^\phi$ is bounded from above by a finite value, then Theorem~\ref{RakhmanovTheorem} implies the existence of a set $F_0$ maximizing $\I^\phi$ on $\Fcal$. This is not true for an admissible class of contours $\Tcal$ as in Definition~\ref{def:AdmissibleClass}. Indeed, first of all, contours are not preserved under Hausdorff metric: one could construct a sequence of space-filling curves converging to a set of strictly positive measure on $\C$, hence not a contour. But, in a more drastic issue, one could have a sequence of contours where each one extends to $\infty$ along given fixed sectors, but with a limiting contour that does not extend on the given sector, but in a different (neighboring) one. 
		
		To overcome these issues, we need instead to work with a subclass $\Tcal_0$ of $\Tcal$, suitably chosen so that sets in the closure $\Fcal_0=\overline{\Tcal}_0$ of $\Tcal_0$ in the Hausdorff metric have a better geometry than sets in the closure $\Fcal=\overline{\Tcal}$ of $\Tcal$, in order to avoid the second issue mentioned above. 
		
Nevertheless, $\Fcal_0$ still consists of sets more arbitrary than unions of contours. Thus, when we apply Theorem~\ref{RakhmanovTheorem} to $\Fcal_0$, we obtain a max-min set $F_0$ for the energy $\I^\phi$ which is not necessarily a contour. 

The second step is to study the equilibrium measure $\mu_0=\mu_*(\phi,F_0)$ of $F_0$, which turns out to be a {\it critical measure}, that is, a saddle point of $\I^\phi$ when viewed as acting on appropriate measures on $\C$. Using this interpretation, we prove several qualitative features for $\mu_0$. In particular, using the basic theory of quadratic differentials we show that $\supp\mu_0$ consists of a union of arcs.

With $\mu_0$ at hand, we then reconstruct a contour $\Gamma_0\in \Tcal_0$ for which $\mu_*(\phi,\Gamma_0)=\mu_0$. In this process, the fact that the original set $F_0$ belongs to the closure of $\Tcal_0$ is essential, with the properties that define $\Tcal_0$ being the crucial ingredient.
	
\subsection{Structure for the rest of the paper}
	
The organization of the rest of the paper is as follows.

In Section \ref{sec:Critical}, we present a systematic study of the \emph{critical measures} associated to the max-min problem, which are the critical points of the energy functional $\I^\phi$. We obtain structural results on them, showing in particular that they are supported on curves, and relating them to the poles of $\Phi$. The obtained geometric properties of their support play a substantial role in the later construction of the max-min solution.

After having a suitable characterization for critical measures we study their analogue for sets, the so-called {\it critical sets}. This is done in Section \ref{sec:CritSets}, where we also establish that the equilibrium measure of a critical set with finite energy is a critical measure, and thus translate results from critical measures to critical sets.

At last, we turn focus to the Pólya-Tchebotarev Problem \eqref{eq:genPC}, and in Section \ref{sec:MaxMinSolution} we combine the results of previous sections to prove Theorem~\ref{maintheorem}.  

In Appendix~\ref{sec:QDs} we review some results from the general theory of quadratic differentials that we use during the text, in a way suitably adapted to our purposes.

\section{Critical Measures and Critical Sets}\label{sec:Critical}

As explained previously, the critical points of the energy functional $\I^\phi$, both viewed as acting on measures and on sets, play a fundamental role in our approach. This section is dedicated to their study.

	\subsection{Critical Measures}\label{sec:criticalmeasures}
	
	While studying the asymptotic behavior of zeroes of Stieltjes Polynomials, Martínez-Finkelshtein and Rakhmanov defined the notion of  \emph{critical measures}, measures that are critical points of a log energy functional. Since then this notion has found its way to other problems, for instance, in zero distribution of orthogonal polynomials and related families \cite{MartinezFinkelshteinSilva2016, Bertola2022a}, random matrix theory and other related models \cite{MartinezFinkelshteinSilva2021}.

We always assume $\phi=\re\Phi$, where $\Phi$ was defined in Equation \eqref{eq:PhiDef}. We also fix the notation  $\Zcal=\{z_1,\hdots, z_{n_Q},w_1,\ldots,w_{n_L}\}$ for the set of singularities, $\Wcal = \{ w_1,\ldots,w_{n_L} \} \setminus \Zcal$  and $\Ccal=\{c_1,\hdots, c_{n_\Ccal}\}$ for the set of fixed points. We also consider the polynomial
\[
	C(z) = \prod_{i=1}^{n_C} (z - c_i)
\]
	with $C \equiv 1$ when $\Ccal = \emptyset$.

 For this section, it is also convenient to write
\begin{equation}\label{eq:partialfractionPhiprime}
\Phi'(z)=\frac{B(z)}{A(z)}, 
\quad \text{with}\quad A(z)\defeq \prod_{j=1}^{n_Q}(z-z_j)^{m_j+1} \prod_{w \in \Wcal} (z-w_j) ,
\end{equation}
 where $B$ is a polynomial of degree $\deg P + n_Q + | \Wcal| - 1$ with $B(z)\neq 0$ for $z\in \Wcal$. 

We consider the following class of test functions
\begin{equation}\label{deff:testfunctions}
\Hcal \defeq \{ h\in C_c^2(\C); \; h(c)=0, c\in \Ccal, \text{ and }  h\equiv 0 \text{ in a neighborhood of } \Zcal \}.
\end{equation}
In other words, $\Hcal$ consists of the class of compactly supported, complex-valued functions $h:\C\to \C$, which are twice (real) differentiable, that vanish at the points in $\Ccal$, and that vanish in a neighborhood of each singularity of $\Phi$. 
	
\begin{definition} \label{deff:variationenergymeasure}
Let $\mu$ be a finite measure on $\C$ and $h\in \Hcal$. The \emph{derivative of the weighted energy functional $\I^\phi$ at $\mu$ in the direction of $h$} is the limit
	$$
		\D_h \en^\phi(\mu) \defeq \lim_{t \to 0} \frac{\I^\phi(\mu^t) - \I^\phi(\mu)}{t},
	$$
	if it exists, where $\mu^t$ is the pushforward of $\mu$ by $ z \mapsto z^t \eqdef z + th(z)$.
	\end{definition}

The properties of $\Hcal$ are tied to our later goal of perturbing a max-min contour, and using its extremal property within $\Tcal$ to derive properties of its equilibrium measure, which will turn out to be critical. We impose $h$ to vanish at $\Ccal$ so as that perturbations of the extremal contour indeed remain in $\Tcal$. Later on, we will in fact show that critical measures cannot be supported near singularities of $\Phi$, but until we are able to prove this fact, we need to impose that members of the class $\Hcal$ vanish near the singularities of $\Phi$, to avoid a blow-up of the energy when doing perturbations.
	
Recall that $\Mcal_1^\phi(\C)$ was previously defined in Section~\ref{subsec:PotTheory}. It turns out that, within the class of test functions $\Hcal$, directional derivatives of $\mu\in \Mcal_1^\phi(\C)$ always exist and are somewhat explicit, as claimed by the next proposition. This proposition has appeared before, for instance, in \cite{MartinezFinkelshtein2011, Kuijlaars2015}, but for completeness the proof here has to be adapted in virtue of the singularities of $\Phi$.
	
	\begin{proposition}\label{MuDerivative}
		For each $\mu \in \Mcal_1^\phi(\C)$ and $h \in \Hcal$, the derivative $\D_h \I^\phi(\mu)$ exists and is given by
		\begin{equation}
			\D_h\I^\phi(\mu) 
			= - \Real \left( \iint \frac{h(x)-h(y)}{x-y}  \, \dd \mu(x) \, \dd \mu(y) - 2 \int \Phi'(x)h(x) \, \dd \mu(x) \right).
		\end{equation}
	\end{proposition}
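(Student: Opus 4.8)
The plan is to compute the derivative directly from the definition by splitting the weighted energy $\I^\phi(\mu^t) = \I(\mu^t) + 2\int\phi\,\dd\mu^t$ into the logarithmic-energy part and the external-field part, and handling each separately. First I would unwind the pushforward: since $\mu^t$ is the pushforward of $\mu$ under $z\mapsto z^t = z + th(z)$, one has $\int f\,\dd\mu^t = \int f(z+th(z))\,\dd\mu(z)$ for any test function $f$, so that
$$
\I(\mu^t) = \iint \log\frac{1}{|x^t - y^t|}\,\dd\mu(x)\,\dd\mu(y), \qquad \int \phi\,\dd\mu^t = \int \phi(x+th(x))\,\dd\mu(x).
$$
Then I would form the difference quotient, pass the limit $t\to 0$ under the integral sign (this is the step requiring justification, see below), and compute the pointwise $t$-derivatives. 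For the external-field term, since $\phi = \Real\Phi$ and $\Phi$ is holomorphic near $\supp h$ (as $h$ vanishes near $\Zcal$), the chain rule for $\partial_z,\partial_{\bar z}$ gives $\frac{\dd}{\dd t}\big|_{t=0}\phi(x+th(x)) = \Real\big(\Phi'(x)h(x)\big)$, producing the term $-2\int\Real(\Phi'(x)h(x))\,\dd\mu(x)$ after accounting for the overall sign convention $\D_h\I^\phi = \lim (\I^\phi(\mu^t)-\I^\phi(\mu))/t$. For the logarithmic part, using $\log|w| = \Real\log w$ and differentiating $-\log|x^t-y^t|$ in $t$ at $t=0$ gives the symmetric kernel $-\Real\frac{h(x)-h(y)}{x-y}$ on the off-diagonal; note this kernel extends continuously across the diagonal $x=y$ (its value there is essentially $-\Real h'(x)$ when $h$ is differentiable in the complex sense, but since $h$ is only $C^2$ in the real sense one treats it as a bounded, continuous function of $(x,y)$ using the mean value theorem), so the double integral over $\mu\times\mu$ is well defined and finite.

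The main obstacle is justifying the interchange of limit and integration, i.e.\ the differentiation under the integral sign, and this is precisely where the hypotheses $\mu\in\Mcal_1^\phi(\C)$ and $h\in\Hcal$ are used. For the external-field part the argument is easy: $h$ has compact support and $\Phi$ is smooth on a neighborhood of $\supp h$, so $t\mapsto\phi(x+th(x))$ has a $t$-derivative bounded uniformly in $x$ and $t\in(-1,1)$, hence dominated convergence applies. For the logarithmic part one writes, for fixed $x\neq y$,
$$
\frac{1}{t}\Big(\log\frac{1}{|x^t-y^t|} - \log\frac{1}{|x-y|}\Big) = -\frac{1}{t}\log\left|1 + t\,\frac{h(x)-h(y)}{x-y}\right|,
$$
and the key estimate is that $\big|\frac{h(x)-h(y)}{x-y}\big| \le \|\nabla h\|_\infty =: M$ for all $x\neq y$ (a Lipschitz bound from $h\in C_c^2$), so for $|t| < \tfrac{1}{2M}$ the quantity $t\frac{h(x)-h(y)}{x-y}$ stays in a fixed disc of radius $\tfrac12$ around $0$, on which $w\mapsto \frac{1}{t}\log|1+tw|$ is bounded by an absolute constant times $M$ independently of $t$. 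This provides a dominating function that is constant (hence $\mu\times\mu$-integrable since $\mu$ is finite), and dominated convergence yields the result; no use of the finite-energy or growth condition on $\mu$ is actually needed for this term once the Lipschitz bound is in hand, though one should remark that $\I(\mu)<\infty$ and $\phi\in L^1(\mu)$ guarantee all quantities in sight are finite so the difference quotient makes sense.

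Finally I would assemble the two pieces and reconcile signs with the stated formula: the $t$-derivative of $\I^\phi(\mu^t)$ at $t=0$ equals $-\Real\iint\frac{h(x)-h(y)}{x-y}\,\dd\mu(x)\,\dd\mu(y) - 2\int\Real(\Phi'(x)h(x))\,\dd\mu(x)$, and since the entire expression inside is real after taking $\Real$ of each summand, this matches $-\Real\big(\iint\frac{h(x)-h(y)}{x-y}\,\dd\mu(x)\,\dd\mu(y) - 2\int\Phi'(x)h(x)\,\dd\mu(x)\big)$, which is the claim. A small remark worth including: the computation of $\frac{\dd}{\dd t}\log|x^t-y^t|$ uses that $x^t - y^t = (x-y)\big(1 + t\frac{h(x)-h(y)}{x-y}\big)$ and $\frac{\dd}{\dd t}\log|1+t w| = \Real\frac{w}{1+tw}$, evaluated at $t=0$ to give $\Real w$ — this is elementary but should be spelled out since $h$ is not assumed holomorphic. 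The adaptation relative to \cite{Kuijlaars2015} is only that one must check $\Phi'$ is bounded on $\supp h$, which holds because $\supp h$ is a compact set disjoint from $\Zcal$.
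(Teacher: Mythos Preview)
Your proposal is correct and follows essentially the same approach as the paper's own proof: both split into the logarithmic and external-field parts, use the factorization $x^t-y^t=(x-y)\bigl(1+t\tfrac{h(x)-h(y)}{x-y}\bigr)$ together with the Lipschitz bound on $h$ for the first, and use smoothness of $\Phi$ on the compact set $\supp h$ (disjoint from $\Zcal$) for the second. The only cosmetic difference is that the paper phrases the argument via Taylor expansions with uniform $o(t)$ remainders, whereas you frame it as differentiation under the integral sign with dominated convergence; one small point to tighten is that your ``$t\in(-1,1)$'' should be ``$|t|$ sufficiently small'' so that $x+th(x)$ remains in a compact set bounded away from $\Zcal$.
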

	
	\begin{proof}
		By definition of pushforward measures,
		$$
			\I^\phi(\mu^t) 
			= - \iint \log |x-y+t(h(x)-h(y))| \, d\mu(x) \, \dd \mu(y) 
				+ 2\int \phi(x+th(x)) \, \dd \mu(x),
		$$
		therefore
		\begin{equation}\label{energydifference}
			\I^{\phi}(\mu^t) - \I^\phi(\mu) 
				= - \iint \log \left| 1 + t \frac{h(x)-h(y)}{x-y} \right| \, \dd \mu(x) \, \dd \mu(y) 
				+ 2 \int [ \phi(x+th(x)) - \phi(x) ] \, \dd \mu(x).
		\end{equation}
		
		For small $t$ we have
		\begin{equation}\label{logbh}
		\begin{split}
			\log \left|1 + t \frac{h(x)-h(y)}{x-y} \right|
				&= \Real \log \left( 1 + t \frac{h(x)-h(y)}{x-y} \right) \\
				&= \Real \left( t \frac{h(x)-h(y)}{x-y} \right) +  o(t) 
		\end{split}
		\end{equation}
			where, since $h \in C^2_c$, the constant in $o$ is uniform with respect to $x,y \in \C$.

		For the second integral start by writing
		$$
			\int \left[ \phi(x+th(x)) - \phi(x) \right] \, \dd \mu(x) = \Real  \int_{\C \setminus O} \left(\Phi(x+th(x)) - \Phi(x)\right) \, \dd \mu(x) ,
		$$
		where $O$ is a finite union of open discs $D_{\eta/2}(z_j)$ in which $h$ is identically zero. 
		
		If $x$ and $x+th(x)$ are not singularities of $\Phi$, we write
		$$
			\Phi(x+th(x)) - \Phi(x) = \Phi'(x) th(x) + r(th(x);x),
		$$
		where the term $r$ should be interpreted as an error.
		In particular, from this identity we obtain that $r(th(x);x) = 0$ whenever $x \notin \supp h$.
		
		For sufficiently small $t$ one has that $x \in \C \setminus O$ implies $x + th(x) \in \C \setminus O$. Thus
		$$
			|r(th(x);x)| \leq \frac{\sup_{c \in [x,x+th(x)]}|\Phi''(c)|}{2} t^2 |h(x)|^2.
		$$
		Since $x \in \supp h$ and $c \notin D_\eta(z_j)$ for every $x$, there exists a  uniform constant $M > 0$ such that
		$$
			|r(th(x);x)| \leq M t^2 \|h^2\|_\infty.
		$$
		 This implies
		\begin{equation}\label{phibh}
			\phi(x+th(x)) - \phi(x) = t \Real(\Phi'(x)h(x)) + o(t), \quad t \to 0.
		\end{equation}
		Plugging (\ref{logbh}) and (\ref{phibh}) in (\ref{energydifference}) we get
		$$
			\I^\phi(\mu_t) - \I^\phi(\mu) = - t \Real \left( \iint \frac{h(z)-h(x)}{z-x} \, \dd\mu(z) \, \dd\mu(x) - 2\int \Phi'(x)h(x) \, \dd\mu(x) \right) + o(t)
		$$
		and the result follows.
	\end{proof}
	
Now that we have a notion of derivative of $\I^\phi$, the notion of critical measure is natural.
	
	\begin{definition}\label{def:criticalmeasure}
		A measure $\mu \in \Mcal^\phi_1(\C)$ is  called \emph{critical} when 
		$$
			\D_h \I^\phi(\mu) = 0
		$$
		for every $h \in \Hcal$.
	\end{definition}
	
	Considering $h$ and $\ii h$ separately, the next result is an immediate consequence of Proposition~\ref{MuDerivative}.
	
	\begin{corollary}\label{Cor:critical}
		A measure $\mu \in \Mcal^\phi_1(\C)$ is critical if, and only if, 
		\begin{equation}\label{criteq}
			\iint \frac{h(x)-h(y)}{x-y} \, \dd\mu(x) \, \dd\mu(y) = 2 \int \Phi'(x) h(x) \, \dd\mu(x)
		\end{equation}
		for every $h \in \Hcal$.
	\end{corollary}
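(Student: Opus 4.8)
The plan is to read this off directly from Proposition~\ref{MuDerivative} by a standard real/imaginary-part trick. I would introduce the shorthand
$$
F(h)\defeq \iint \frac{h(x)-h(y)}{x-y}\,\dd\mu(x)\,\dd\mu(y)-2\int \Phi'(x)h(x)\,\dd\mu(x),
$$
so that Proposition~\ref{MuDerivative} asserts both that $F(h)$ is a well-defined finite complex number for every $h\in\Hcal$ and that $\D_h\I^\phi(\mu)=-\Real F(h)$. The one structural fact I need is that $\Hcal$ is a \emph{complex} vector space and that $h\mapsto F(h)$ is $\C$-linear: the conditions defining $\Hcal$ (compact support, class $C^2$, vanishing on $\Ccal$, vanishing in a neighborhood of $\Zcal$) are each preserved under multiplication by $\ii$, so $h\in\Hcal$ implies $\ii h\in\Hcal$; and each of the two integral expressions in $F$ depends $\C$-linearly on $h$.

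Granting this, the argument is immediate. Suppose first that $\mu$ is critical, i.e. $\Real F(h)=0$ for every $h\in\Hcal$. Replacing $h$ by $\ii h\in\Hcal$ gives $0=\Real F(\ii h)=\Real\bigl(\ii F(h)\bigr)=-\Img F(h)$, so $\Real F(h)=\Img F(h)=0$, that is $F(h)=0$, which is precisely identity \eqref{criteq}. Conversely, if \eqref{criteq} holds for every $h\in\Hcal$ then $F(h)\equiv 0$, hence $\D_h\I^\phi(\mu)=-\Real F(h)=0$ for every $h$, i.e. $\mu$ is critical.

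I do not expect any genuine obstacle here; this corollary is truly immediate once Proposition~\ref{MuDerivative} is available. The only two points meriting a line of care are the closedness of $\Hcal$ under $h\mapsto\ii h$, which is a trivial check against the definition \eqref{deff:testfunctions}, and the finiteness of the two integrals defining $F(h)$ for $h\in\Hcal$, which is already part of the content of Proposition~\ref{MuDerivative}.
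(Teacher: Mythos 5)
Your proof is correct and follows exactly the same route as the paper, which explicitly invokes the observation of considering $h$ and $\ii h$ separately as an immediate consequence of Proposition~\ref{MuDerivative}. Your additional care in noting that $\Hcal$ is closed under $h \mapsto \ii h$ and that $F$ is $\C$-linear is the right thing to check, and the paper takes these for granted.
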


	Martínez-Finkelshtein and Rakhmanov \cite{MartinezFinkelshtein2011} showed that the Cauchy transform of a critical measure satisfies an algebraic equation, which in turn guarantees that the support of the measure lives in trajectories of a quadratic differential. Analogous results were obtained in \cite{Kuijlaars2015, MartinezFinkelshteinSilva2016}. The main result of this section establishes the same connection between critical measures and algebraic equations. Following \cite{MartinezFinkelshtein2011, Kuijlaars2015}, the main idea is to apply \eqref{criteq} to the so-called Schiffer variations $x\mapsto h(x)=H(x)/(x-\zeta)$, for each $\zeta\in \C$ fixed and $H(x)$ an appropriately chosen polynomial. However, to do so we need to extend the validity of Corollary~\ref{Cor:critical} beyond the class of test functions $\Hcal$, which we do so extending it in several steps. But before proceeding, we first establish an auxiliary approximation lemma.
	
	\begin{lemma}\label{lem:approx}
		For every $n$ positive integer, there exists $C^2$ functions $\alpha_n, \beta_n :(0,+\infty) \to [0,1]$ such that $\alpha_n, \beta_n \to 1$ pointwise and
		$$
			\beta_n |_{(0,n)} , \alpha_n |_{(2/n,+\infty)} \equiv 1, 
			\quad \beta_n|_{(3n,+\infty)} , \alpha_n|_{(0,1/n)} \equiv 0,
			\quad  |\beta_n'(t)| \leq \frac{1}{n}, 
			\quad |\alpha_n'(t)|  \leq K t^{-1}.
		$$
		for some $K > 0$ independent of $n$.
	\end{lemma}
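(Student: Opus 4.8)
The plan is to build both families from a single fixed $C^2$ ``smoothstep'' profile, and to obtain $\beta_n$ and $\alpha_n$ by affine reparametrizations of the variable $t$ that place the transition region in different locations: a long interval near $3n$ for $\beta_n$, and a short shrinking interval near $0$ for $\alpha_n$.

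First I would fix once and for all a function $g\in C^2(\R)$ with $g\equiv 1$ on $(-\infty,0]$, $g\equiv 0$ on $[1,+\infty)$, with $0\le g\le 1$ and $\|g'\|_\infty\le 2$. Concretely one may take $g(u)=1-(6u^5-15u^4+10u^3)$ on $[0,1]$, extended by its boundary values outside $[0,1]$: the quintic here matches the values $1$ and $0$ and has vanishing first \emph{and} second derivatives at $u=0$ and $u=1$, so $g$ is genuinely $C^2$ (this is the reason for using the quintic rather than the cubic Hermite smoothstep $3u^2-2u^3$, which is only $C^1$ when extended); and $g'(u)=-30u^2(1-u)^2$ has sup norm $30/16<2$ on $[0,1]$.

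Next I would set $\beta_n(t)\defeq g\!\left(\frac{t-n}{2n}\right)$. The transition occurs only when $\frac{t-n}{2n}\in[0,1]$, i.e.\ $t\in[n,3n]$, so $\beta_n\equiv 1$ on $(0,n)$ and $\beta_n\equiv 0$ on $(3n,+\infty)$; for fixed $t$ one has $\beta_n(t)=1$ as soon as $n>t$, giving $\beta_n\to 1$ pointwise, and the chain rule gives $|\beta_n'(t)|=\frac{1}{2n}\bigl|g'(\tfrac{t-n}{2n})\bigr|\le\frac1n$. For $\alpha_n$ I would instead compress the transition into the window $[1/n,2/n]$ by setting $\alpha_n(t)\defeq g(2-nt)$: then $2-nt\ge 1$ when $t\le 1/n$ and $2-nt\le 0$ when $t\ge 2/n$, so $\alpha_n\equiv 0$ on $(0,1/n)$ and $\alpha_n\equiv 1$ on $(2/n,+\infty)$, and $\alpha_n\to1$ pointwise (as soon as $n>2/t$). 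Here $|\alpha_n'(t)|=n\,|g'(2-nt)|\le 2n$, which is only an $O(n)$ bound, but $\alpha_n'$ is supported in $[1/n,2/n]$, where $t\le 2/n$ forces $t\,|\alpha_n'(t)|\le \frac{2}{n}\cdot 2n=4$; hence $|\alpha_n'(t)|\le K t^{-1}$ with $K=4$ independent of $n$. Finally, the range condition ($\alpha_n,\beta_n$ take values in $[0,1]$) and $C^2$-regularity on $(0,+\infty)$ are inherited directly from $g$, since $\alpha_n$ and $\beta_n$ are compositions of the $C^2$ function $g$ with affine maps.

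There is no real obstacle in this lemma; the only point requiring a moment's thought is why the crude derivative bound $|\alpha_n'|\le 2n$ is consistent with the requested $t^{-1}$ bound, and the answer is precisely that $\alpha_n$ is allowed to vary only on the interval $[1/n,2/n]$, where $t^{-1}$ is comparable to $n$, so the two bounds agree up to an absolute constant. The second thing to be careful about is choosing the reference profile $g$ to be $C^2$ (not merely $C^1$), which is handled by the quintic above.
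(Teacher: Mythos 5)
Your proof is correct and is, in substance, exactly the paper's construction: after expanding the affine change of variables, $g\!\left(\frac{t-n}{2n}\right)=(1-u)^3(1+3u+6u^2)|_{u=(t-n)/(2n)}$ reproduces the paper's explicit quintic for $\beta_n$ on $(n,3n)$, and likewise for $\alpha_n$. The only difference is presentational — factoring both families through a single $C^2$ smoothstep profile and checking the $|\alpha_n'(t)|\le K t^{-1}$ bound by localizing to the support $[1/n,2/n]$ is cleaner than writing out the expanded polynomials and verifying the bounds by brute force, but the underlying idea (quintic interpolation with vanishing first and second derivatives at the endpoints) is the same.
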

	
	\begin{proof}
		A cumbersome but straightforward calculation shows that the functions
		$$
		\beta_n(t) \defeq
				\begin{cases}
					1, \quad t \leq n \\
					\frac{1}{16n^5} (3n-t)^3 (2n^2 -3nt + 3t^2), \quad n < t < 3n , \\
					0, \quad 3n \leq t,
				\end{cases}
		$$
		and
		$$
			\alpha_n(t) \defeq
			\begin{cases}
			0, \quad t \leq \frac{1}{n},\\
			 (nt-1)^3 (6n^2t^2 - 27nt + 31), \quad \frac{1}{n} \leq t \leq \frac{2}{n} \\
			 1, \quad \frac{2}{n} \leq t
			 	\end{cases}
		$$
		satisfy all conditions above. 
	\end{proof}

For the next result, recall that $C$ is the polynomial defined in \eqref{eq:deffC}, and the polynomials $A$ and $B$ were introduced in \eqref{eq:partialfractionPhiprime}.
	
	\begin{lemma}\label{crit1}
	Fix $M \defeq \deg B + \deg C$ points $x_1,\ldots,x_M$ which are not singularities of $\Phi$, and a function $\chi\in C^2(\C)$ which vanishes in a neighborhood of each point $x_1,\hdots,x_M$, and is identically equal to $1$ in a neighborhood of $\infty$.
If $\mu \in \Mcal_1^\phi(\C)$ is a critical measure, then	Equation \eqref{criteq} remains valid for the choice
		 \begin{equation}\label{eq:CritiEqGen1}
		 	h(z) \defeq \frac{A(z) C(z)}{\prod_{j=1}^M (z-x_j)} \chi(z),\quad z\in \C.
		 \end{equation}
	\end{lemma}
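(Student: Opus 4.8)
The function $h$ in \eqref{eq:CritiEqGen1} does not belong to $\Hcal$: it is not compactly supported, and although the factor $C$ forces it to vanish at $\Ccal$ and the factor $A$ forces it to vanish \emph{along} $\Zcal$, it is not identically zero near $\Zcal$. The plan is to approximate $h$ from within $\Hcal$ by truncating it with the cut-offs of Lemma~\ref{lem:approx}, apply Corollary~\ref{Cor:critical} to each truncation, and pass to the limit. First one checks that \eqref{criteq} even makes sense for this $h$: since $\deg A=\deg Q+n_Q+n_L$ and $\deg B=\deg P+n_Q+n_L-1$, one has $\deg(AC)-M=\deg A-\deg B=1-N\le 0$, so $AC/\prod_{j=1}^M(z-x_j)$ is bounded near $\infty$, and as $\chi$ vanishes near each $x_j$ it cancels the poles there; hence $h$ is a bounded $C^2$ function with bounded gradient, and likewise $\Phi'h=\bigl(BC/\prod_j(z-x_j)\bigr)\chi$ is bounded. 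Consequently $(x,y)\mapsto(h(x)-h(y))/(x-y)$ is bounded by the Lipschitz constant of $h$, and all integrals appearing below converge absolutely because $\mu$ is finite. Finally, near each $\zeta\in\Zcal$ the factor $A$ gives a bound $|h(z)|\le C_\zeta\,|z-\zeta|$ (with exact order $1$ at the points $w_j$); this estimate is the one new ingredient compared with \cite{Kuijlaars2015}, and is exactly what the specific form of $A$ in \eqref{eq:partialfractionPhiprime} provides.

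Fix $\epsilon_0>0$ so small that the discs $D_{\epsilon_0}(\zeta)$, $\zeta\in\Zcal$, are pairwise disjoint and disjoint from $\Ccal\cup\{x_1,\dots,x_M\}$, and for $n$ with $2/n<\epsilon_0$ set
\[
\eta_n(z)\defeq\beta_n(|z|)\prod_{\zeta\in\Zcal}\alpha_n(|z-\zeta|),\qquad h_n\defeq h\,\eta_n .
\]
By Lemma~\ref{lem:approx}, $\eta_n\in C^2(\C)$, $0\le\eta_n\le 1$, $\eta_n\equiv 1$ outside $\Omega_n\defeq\{|z|>n\}\cup\bigcup_{\zeta\in\Zcal}D_{2/n}(\zeta)$, and $\eta_n\equiv 0$ both on a neighbourhood of $\Zcal$ and for $|z|\ge 3n$. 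Hence $h_n\in C^2_c(\C)$, $h_n$ vanishes at $\Ccal$ (since $h$ does) and $h_n\equiv 0$ near $\Zcal$, so $h_n\in\Hcal$; Corollary~\ref{Cor:critical} then gives, for all large $n$,
\[
\iint \frac{h_n(x)-h_n(y)}{x-y}\,\dd\mu(x)\,\dd\mu(y)=2\int\Phi'(x)h_n(x)\,\dd\mu(x).
\]
On the right-hand side, $\Phi'h_n=(\Phi'h)\,\eta_n$ with $\Phi'h$ bounded, $0\le\eta_n\le1$, and $\eta_n\to1$ pointwise off $\Zcal$; since $\mu$ has finite logarithmic energy it has no atoms, so $\mu(\Zcal)=0$ and dominated convergence yields $\int\Phi'h_n\,\dd\mu\to\int\Phi'h\,\dd\mu$.

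It remains to pass to the limit on the left. Put $e_n\defeq h_n-h=h(\eta_n-1)$, so that the left-hand side above minus its analogue for $h$ equals $\iint\frac{e_n(x)-e_n(y)}{x-y}\,\dd\mu(x)\,\dd\mu(y)$, and it suffices to show this tends to $0$. The key claim — and the only genuinely delicate estimate — is that $\{e_n\}$ is uniformly bounded and uniformly Lipschitz on $\C$. Boundedness is immediate from $|e_n|\le|h|$. For the Lipschitz bound, write $\nabla e_n=(\eta_n-1)\nabla h+h\,\nabla\eta_n$: the first term is controlled by $\|\nabla h\|_\infty$; outside $\bigcup_\zeta D_{2/n}(\zeta)$ only the $\beta_n$-factor of $\eta_n$ is non-constant, so $|\nabla\eta_n|\le 1/n$ there, whereas on $D_{2/n}(\zeta)$ the bound $|\alpha_n'(t)|\le K t^{-1}$ gives $|\nabla\eta_n(z)|\le K/|z-\zeta|$, which is absorbed by $|h(z)|\le C_\zeta|z-\zeta|$ from the first paragraph, so $|h\,\nabla\eta_n|$ is bounded uniformly in $n$. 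Granting this, $(e_n(x)-e_n(y))/(x-y)$ is bounded by a constant independent of $n$, and it vanishes unless $x$ or $y$ lies in $\Omega_n$ (because $e_n\equiv 0$ off $\Omega_n$); since $\mu(\Omega_n)\to0$ — as $\mu$ is finite and $\mu(\{\zeta\})=0$ for each $\zeta$ — the double integral is $O(\mu(\Omega_n))\to0$. Combining the two limits gives \eqref{criteq} for $h$. The principal obstacle is precisely this uniform Lipschitz control of $e_n$ near $\Zcal$: one must check that the blow-up rate $|z-\zeta|^{-1}$ of $\nabla\eta_n$ is matched by the vanishing of $h$ to order at least $1$ along $\Zcal$, which is where the factor $A$ — carrying $(z-w_j)$ and $(z-z_j)^{m_j+1}$ — is used.
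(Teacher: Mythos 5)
Your proof is correct and follows essentially the same route as the paper: the same cutoff $\eta_n(z)=\beta_n(|z|)\prod_{\zeta\in\Zcal}\alpha_n(|z-\zeta|)$ (the paper calls it $\psi_n$), and the same key estimate, namely that the order-$\geq 1$ vanishing of $h$ along $\Zcal$ (supplied by the factor $A$) absorbs the $|z-\zeta|^{-1}$ blow-up of $\nabla\eta_n$ coming from $|\alpha_n'(t)|\leq Kt^{-1}$. The only cosmetic difference is in the final limit for the double integral: the paper applies dominated convergence to $(h_n(x)-h_n(y))/(x-y)$ directly, whereas you bound $e_n=h_n-h$ via its small support, giving $O(\mu(\Omega_n))\to 0$; both work once the uniform Lipschitz bound is in hand.
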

	
	\begin{proof}
	We shall approximate $h$ by a sequence of functions $(h_n)$ in $\Hcal$, such that the integrals in \eqref{criteq} with respect to $h_n$ converge to the corresponding integrals with respect to $h$.

With $(\alpha_n),(\beta_n)$ as in Lemma~\ref{lem:approx}, define
		\begin{equation}\label{eq:hnhapproxlemma}
			\psi_n(z) \defeq \beta_n(|z|) \prod_{w \in \Zcal} \alpha_n(|z-w|), \quad 
			h_n(z) \defeq    \psi_n(z) h(z)   , \quad z \in \C.
		\end{equation}

		Note that since $\alpha_n' \equiv 0$ in a neighborhood of $0$,
		\begin{equation*}
		\begin{split}
			\frac{\partial}{\partial \overline{z}} \alpha_n(|z-w|) &= \alpha_n'(|z-w|) \frac{1}{2} \frac{z-w}{|z-w|} \in C^1, \\
			\frac{\partial}{\partial z} \alpha_n(|z-w|) &= \alpha_n'(|z-w|) \frac{1}{2} \frac{\overline{z-w}}{|z-w|} \in C^1.
		\end{split}
		\end{equation*}
		Thus $\alpha_n \in C^2$. Analogously one sees that $\beta_n \in C^2_c$. This implies that $h_n \in \Hcal$, so \eqref{criteq} is valid for it.

		From \eqref{eq:CritiEqGen1} and $M = \deg B + \deg C$, we see that $\Phi'h$ is continuous and bounded, and thus belongs to $L^1(\dd\mu)$. Since $|\alpha_n|,|\beta_n|\leq 1$, we have $|\Phi'h_n|\leq |\Phi' h|$, and the Dominated Convergence Theorem implies that
		\begin{equation}
			\lim_{n \to \infty} \int \Phi'(x) h_n(x) \, \dd\mu(x) = \int \Phi'(x) h(x) \, \dd\mu(x).
		\end{equation}
		
		Because $\mu\in \Mcal_1^\phi(\C)$, the measure $\mu$ has finite logarithmic energy, so it has no mass points. In particular, the diagonal $\{(x,x); x \in \C\}$ has zero $\mu \times \mu$ measure, and therefore
		$$
			\lim_{n \to \infty} \frac{h_n(x) - h_n(y)}{x-y} = \frac{h(x) - h(y)}{x-y}, \quad \mu \times \mu- \text{a.e.}.
		$$
		
		The Mean Value Inequality implies that
		$$
			\left| \frac{h_n(x) - h_n(y)}{x-y} \right| 
			\leq \sup_{z \in \C} \| D h_n(z)\|.
		$$
	Thus, provided that we show that the right-hand side is finite and bounded uniformly in $n \in \N$, the Dominated Convergence Theorem gives
		$$
			\lim_{n \to \infty} \iint \frac{h_n(x) - h_n(y)}{x-y} \, \dd\mu(x) \, \dd\mu(y) = \iint \frac{h(x) - h(y)}{x-y} \, \dd\mu(x) \, \dd\mu(y).
		$$
		
		We now verify that $\| Dh_n(z)\|$ is bounded uniformly in $n$.
From a direct calculation,
		\begin{multline*}
			2 \frac{\partial}{\partial \overline{z}} \psi_n(z) 
				 = \frac{z}{|z|}\beta_n'(|z|)    \prod_{w \in \Zcal} \alpha_n(|z-w|) \\ 
					+ \beta_n(|z|)   \sum_{w \in \Zcal} \left( \frac{z - w}{|z- w|} \alpha_n'(|z-w|) \left[ \prod_{ \substack{w' \in \Zcal \\ w' \neq w}} \alpha_n(|z-w'|)\right] \right),
		\end{multline*}
and therefore using Lemma~\ref{lem:approx},
\begin{equation}\label{eq:boundgradientpsin}
			 \| \nabla \psi_n(z) \| 
			= 2 \left|\frac{\partial }{\partial \overline{z}} \psi_n(z)\right| \leq 1  + K \sum_{w \in \Zcal} |z-w|^{-1}
\end{equation}
We view $h=h_1+\ii h_2\simeq (h_1,h_2)$ as a function $\R^2\to \R^2$, with $h_1=\Re h, h_2=\Im h$ and Jacobian $Dh$. For $z \in \C$  and $(u,v) \in \R^2$, we differentiate \eqref{eq:hnhapproxlemma} and obtain
		\begin{equation*}
		\begin{split}
			D h_n(z) \cdot (u,v) 
%				&= \begin{pmatrix}
%					 \psi_n(z) \frac{\partial }{\partial x} h_1(z) + h_1(z) \frac{\partial }{\partial x} \psi_n(z)	& \psi_n(z) \frac{\partial }{\partial y}h_1(z) + h_1(z) \frac{\partial }{\partial y} \psi_n(z)		\\
%					\psi_n(z) \frac{\partial }{\partial x} h_2(z) + h_2(z) \frac{\partial }{\partial x} \psi_n(z)	&	\psi_n(z) \frac{\partial }{\partial y}h_2(z) + h_2(z) \frac{\partial }{\partial y} \psi_n(z)	
%				\end{pmatrix} \cdot (u,v) \\
%				&= \begin{pmatrix}
%					\psi_n(z) \nabla h_1(z) \cdot (u,v) + h_1(z) \nabla \psi_n(z) \cdot (u,v) \\
%					\psi_n(z) \nabla h_2(z) \cdot (u,v) + h_2(z) \nabla \psi_n(z) \cdot (u,v)
%				\end{pmatrix} \\
				&= \left[ \psi_n(z) D h(z) + 
				\begin{pmatrix}
				h_1(z) \nabla \psi_n(z) \\ h_2(z) \nabla \psi_n(z)
				\end{pmatrix}  \right] \cdot (u,v).
				\end{split} 
		\end{equation*}
Thus, taking operator norm
		$$
			\| D h_n(z)\| \leq \|D h(z)\| + \left\| \begin{pmatrix}
				h_1(z) \nabla \psi_n(z) \\ h_2(z) \nabla \psi_n(z) 
				\end{pmatrix}\right\|.
		$$
		Since $h_1(z) = \Real h(z)$, $h_2(z) = \Img h(z)$ and $h$ has a zero of order at least $1$ at each $w \in \Zcal$, we use \eqref{eq:boundgradientpsin} and conclude that the second term in the r.h.s. above is bounded.
		
		For the first term, note that $\|D h(z)\|$ is bounded on compact sets since $h \in C^2$. Using that $h$ is holomorphic for sufficiently large $z$, we see that
		$$
			Dh(z) \cdot (u,v) = h'(z) (u+iv).
		$$
		Therefore $\|Dh(z)\| = |h'(z)|$ is bounded a neighborhood of infinity and, thus, on the whole complex plane.	
		This concludes the proof.
	\end{proof}	
	
Recall that $\Zcal$ denotes the set of singularities of $\Phi$, see \eqref{eq:defZ}.	
	
	\begin{lemma}\label{crit2}
		Let $\mu\in \Mcal_1^\phi(\C)$ be a critical measure. If $z \in \C \setminus \Zcal$ satisfies
		\begin{equation}\label{zcondition}
			\int \frac{1}{|x-z|}\, \dd\mu(x) < \infty,
		\end{equation} 
then Equation (\ref{criteq}) remains valid for 
		$$
			h(x) = \frac{g(x)}{x-z}, \quad g \in \Hcal.
		$$
		\end{lemma}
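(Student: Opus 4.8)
The plan is to approximate the singular test function $h(x)=g(x)/(x-z)$ by a sequence $h_n\in\Hcal$, for which \eqref{criteq} is already available via Corollary~\ref{Cor:critical}, and then let $n\to\infty$. Write $C_z\defeq\int|x-z|^{-1}\,\dd\mu(x)$, which is finite by \eqref{zcondition}. Using the functions $\alpha_n$ from Lemma~\ref{lem:approx}, I would take the radial cutoffs $\chi_n(x)\defeq\alpha_n(|x-z|)$, so that $\chi_n\equiv 0$ on $D_{1/n}(z)$, $\chi_n\equiv 1$ off $D_{2/n}(z)$, $0\le\chi_n\le 1$, $|\nabla\chi_n(x)|\le K/|x-z|$ and $\|\nabla\chi_n\|_\infty\lesssim n$, and set $h_n(x)\defeq g(x)\chi_n(x)/(x-z)$. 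Since $\chi_n$ vanishes near $z$ and $z\notin\Zcal$, one checks directly that $h_n\in C_c^2(\C)$, that $h_n$ vanishes on $\Ccal$, and that it vanishes near $\Zcal$ (because $g$ does); hence $h_n\in\Hcal$ and \eqref{criteq} holds for $h_n$. It remains to pass both sides to the limit.

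The right-hand side is the easy part: as $z\notin\Zcal$, $\Phi'$ is bounded near $z$, so $|\Phi'(x)h(x)|\lesssim |x-z|^{-1}$ on $\supp h$ and thus $\Phi'h\in L^1(\mu)$ by \eqref{zcondition}; since $|\Phi'h_n|\le|\Phi'h|$ and $h_n\to h$ off the $\mu$-null set $\{z\}$, dominated convergence gives $\int\Phi'h_n\,\dd\mu\to\int\Phi'h\,\dd\mu$. For the left-hand side I would use the algebraic identity (valid $\mu\times\mu$-a.e., since $\mu$ has no atoms)
$$
\frac{h_n(x)-h_n(y)}{x-y}=\frac{(g\chi_n)(x)-(g\chi_n)(y)}{(x-y)(x-z)}-\frac{(g\chi_n)(y)}{(x-z)(y-z)},
$$
together with the analogous identity for $h$. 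Each summand is in $L^1(\mu\times\mu)$, because $(g\chi_n(x)-g\chi_n(y))/(x-y)$ is bounded, $|g(y)|/|y-z|\in L^1(\mu)$ and $|x-z|^{-1}\in L^1(\mu)$; so (and this already shows the target expression is absolutely convergent) Fubini applies to each piece and reduces the left-hand side of \eqref{criteq} for $h_n$ to
$$
\int\frac{G_n(x)}{x-z}\,\dd\mu(x)-\CT^\mu(z)\int\frac{(g\chi_n)(y)}{y-z}\,\dd\mu(y),\qquad G_n(x)\defeq\int\frac{(g\chi_n)(x)-(g\chi_n)(y)}{x-y}\,\dd\mu(y),
$$
with the corresponding formula for $h$ (put $\chi_n\equiv 1$, $G_n\rightsquigarrow G(x)=\int\frac{g(x)-g(y)}{x-y}\dd\mu(y)$). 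The second term converges by dominated convergence. Setting $e_n\defeq g(1-\chi_n)$, which is supported in $\overline{D_{2/n}(z)}$ with $\|\nabla e_n\|_\infty\lesssim n$, one has $G_n=G-\Xi_n$ where $\Xi_n(x)\defeq\int\frac{e_n(x)-e_n(y)}{x-y}\dd\mu(y)$, so the whole matter collapses to the single assertion
$$
\int\frac{\Xi_n(x)}{x-z}\,\dd\mu(x)\longrightarrow 0 .
$$

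This last assertion is the technical heart, and the step I expect to be the main obstacle. The difficulty is that $\chi_n$ unavoidably has gradient of size $\sim n$ on $D_{2/n}(z)\setminus D_{1/n}(z)$, so the crude bound is only $|\Xi_n|\lesssim n$, which is not integrable against $|x-z|^{-1}\dd\mu$. The cure is to use \eqref{zcondition} quantitatively, through the two consequences $\int_{D_{r/n}(z)}|x-z|^{-1}\dd\mu\to 0$ and $n\,\mu(D_{r/n}(z))\le 2\int_{D_{r/n}(z)}|x-z|^{-1}\dd\mu\to 0$ (for each fixed $r>0$). I would split the integral over $\{|x-z|\le 3/n\}$ and $\{|x-z|>3/n\}$. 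On the small disc, splitting the inner integral defining $\Xi_n$ at $|y-z|=4/n$ shows $|\Xi_n|$ is bounded uniformly in $n$ there: the near part is $\le\|\nabla e_n\|_\infty\,\mu(D_{4/n}(z))\lesssim n\cdot n^{-1}=O(1)$, while the far part is $\le 4\|g\|_\infty C_z$ because $e_n$ vanishes for $|y-z|>2/n$; hence the contribution is $\lesssim\int_{D_{3/n}(z)}|x-z|^{-1}\dd\mu\to 0$. Off the disc $e_n(x)=0$, so $|\Xi_n(x)|\le 3\|g\|_\infty\,\mu(D_{2/n}(z))\,|x-z|^{-1}$ (using $|x-y|\ge|x-z|/3$ on $\supp e_n$), and the contribution is at most $3\|g\|_\infty\,\mu(D_{2/n}(z))\int_{|x-z|>3/n}|x-z|^{-2}\dd\mu\le\|g\|_\infty C_z\cdot n\,\mu(D_{2/n}(z))\to 0$, the decisive gain of a factor $n$ coming from $\int_{|x-z|>3/n}|x-z|^{-2}\dd\mu\le\tfrac n3\int|x-z|^{-1}\dd\mu$. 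Once $\int\Xi_n(x)/(x-z)\,\dd\mu(x)\to 0$ is established, both sides of \eqref{criteq} pass to the limit and the lemma follows. Relative to \cite{Kuijlaars2015}, whose argument this adapts, the only novelty is that the cutoffs must be kept away from $\Zcal$, which is harmless since $z\notin\Zcal$.
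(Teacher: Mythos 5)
Your proof is correct, but it takes a genuinely different (and more laborious) route than the paper. The paper desingularizes $h$ by writing $h(x)=g(x)\overline{(x-z)}/|x-z|^2$ and setting
\[
h_n(x)=\frac{g(x)\,\overline{(x-z)}}{|x-z|^2+\epsilon_n^2},
\]
which has the decisive advantage that the approximants are \emph{pointwise dominated} by the target: $|h_n(x)|\le |g(x)|/|x-z|$, and, more to the point,
\[
\left|\frac{h_n(x)-h_n(y)}{x-y}\right|\le \frac{M}{|x-z|}+\frac{M}{|y-z|}+\frac{M}{|x-z|\,|y-z|},
\]
uniformly in $n$, which is $\mu\times\mu$-integrable thanks to \eqref{zcondition}. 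Dominated convergence then closes the argument in a few lines, with no need to quantify the rate at which $\mu$ puts mass near $z$. Your radial hard cutoff $\chi_n(x)=\alpha_n(|x-z|)$ destroys exactly this domination: $\nabla\chi_n\sim n$ on the transition annulus, so the difference quotients are not uniformly integrable, and you are forced into the quantitative splitting (the $\{|x-z|\le 3/n\}$ versus $\{|x-z|>3/n\}$ dichotomy, with the $n\,\mu(D_{r/n}(z))\to 0$ and $\int_{D_{r/n}(z)}|x-z|^{-1}\dd\mu\to 0$ consequences of \eqref{zcondition}) to rescue the limit. That analysis is correct — the key gain of a factor $n$ from $\int_{|x-z|>3/n}|x-z|^{-2}\dd\mu\le\frac{n}{3}C_z$ is exactly right, and the near-field part is controlled by $\|\nabla e_n\|_\infty\,\mu(D_{4/n}(z))=O(1)$ plus a far tail bounded by $C_z$ — but it is work the paper avoids entirely by a smarter choice of approximant. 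In short: same strategy (approximate $h$ by $h_n\in\Hcal$ and pass to the limit in \eqref{criteq}), different approximating sequence, and the paper's choice makes the limit passage essentially automatic where yours requires a delicate splitting. One cosmetic nit: the inequality $n\,\mu(D_{r/n}(z))\le 2\int_{D_{r/n}(z)}|x-z|^{-1}\dd\mu$ should have the constant $r$ rather than $2$, which is immaterial.
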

	
\begin{proof}
When $\Zcal=\emptyset$, that is, when $\Phi$ is a polynomial, the result reduces to \cite[Lemma~3.5]{Kuijlaars2015}. The proof presented here is essentially the same, we briefly outline it and refer to \cite{Kuijlaars2015} for more details.

	The idea behind the proof is the same as the one on Lemma \ref{crit1}, we shall approximate $h$ by a sequence of functions $h_n \in \Hcal$ whose integrals in $\eqref{criteq}$ with respect to $h_n$ converge to the corresponding integrals with respect to $h$.
	
	Write
	\[
		h(x) = \frac{g(x) \overline{(x-z)}}{|x-z|^2}
	\]
	and define
	\[
		h_n(x) =\frac{g(x) \overline{(x-z)}}{|x-z|^2 + \epsilon_n^2}
	\]
	where $\epsilon_n$ is a sequence of positive numbers converging to zero. Note that $h_n \in \Hcal$ and thus \eqref{criteq} holds for it. Now, Inequality \eqref{zcondition} implies that $h \Phi'$ is integrable with respect to $\mu$ and the dominated convergence theorem implies
	\[
		\int h_n(x) \Phi'(x) \, \dd \mu(x) \to \int h(x) \Phi'(x) \, \dd \mu(x).
	\]
	
	One can show that
	\[
		\left| \frac{h_n(x) - h_n(y)}{x-y} \right| \leq \left( \frac{1}{|x-z|} + \frac{1}{|y-z|} \right) \left| \frac{g(x) - g(y)}{x-y}\right| + \frac{2}{|y-z|} \left|\frac{g(x)}{x-z}\right|
	\]
	and since $g \in \Hcal$, there exists $M > 0$ such that 
	\[
		\left| \frac{h_n(x) - h_n(y)}{x-y} \right| \leq  \frac{M}{|x-z|} + \frac{M}{|y-z|}  + \frac{M}{|x-z||y-z|}.
	\]
	Now, Inequality \eqref{zcondition} implies that the right-hand side above is $\mu \times \mu$ integrable, and thus the Dominated convergence theorem implies
	\[
		\iint \frac{h_n(x) - h_n(y)}{x-y} \, \dd \mu(x) \, \dd \mu(y) \to \iint \frac{h(x) - h(y)}{x-y} \, \dd \mu(x) \, \dd \mu(y).
	\]
	
\end{proof}

With a combination of the previous results, we now obtain that the function $\chi$ may be removed in Lemma~\ref{crit1} when the chosen points $x_1,\ldots,x_M$ satisfy Equation \eqref{zcondition}.
			
	\begin{corollary}\label{crit3}
		Let $\mu\in \Mcal_1^\phi(\C)$ be a critical measure. 
		Fix a set $X = \{x_1,\ldots,x_{M} \}$ of $M \defeq \deg B + \deg C$ distinct points, none of them singularities of $\Phi$, satisfying
		$$
			\int \frac{1}{|x - x_j|} \, \dd\mu(x) < +\infty, \quad j = 1 , \ldots,  M.
		$$
		Then, Equation \eqref{criteq} holds for 
		\begin{equation}\label{criteq2}
			h(x) = \frac{A(x) C(x)}{\prod_{j=1}^M(x-x_j)}.
		\end{equation}
	\end{corollary}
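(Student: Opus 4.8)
The plan is to reduce to the two preceding lemmas: the function in \eqref{criteq2} is exactly the function of Lemma~\ref{crit1} with the cut-off $\chi$ deleted, and the difference is supported near $x_1,\dots,x_M$, where the integrability hypothesis together with Lemma~\ref{crit2} take over. The glue is the (evident) $\C$-linearity of both sides of \eqref{criteq} in $h$.

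First I would build a tailored cut-off. Since $x_1,\dots,x_M$ are distinct and none of them lies in $\Zcal$, choose pairwise disjoint closed discs $B_j\ni x_j$ with each $B_j$ disjoint from $\Zcal$ and from $\{x_l:l\neq j\}$, and pick $\eta_j\in C^2_c(\C)$ with $\eta_j\equiv 1$ near $x_j$ and $\supp\eta_j\subset B_j$. Put $\chi\defeq 1-\sum_{j=1}^M\eta_j$. Then $\chi\in C^2(\C)$, it vanishes in a neighborhood of each $x_j$, and it equals $1$ outside $\bigcup_j B_j$, in particular for all sufficiently large $z$; thus $\chi$ meets the hypotheses of Lemma~\ref{crit1}, so \eqref{criteq} holds for $h_0\defeq \tfrac{A C}{\prod_{j=1}^M(x-x_j)}\,\chi$.

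Next I would handle each local piece. Set $g_j\defeq \tfrac{A C}{\prod_{l\neq j}(x-x_l)}\,\eta_j$ for $j=1,\dots,M$. Then $g_j\in\Hcal$: on $\supp\eta_j=B_j$ the rational prefactor is smooth (its poles $x_l$, $l\neq j$, lie outside $B_j$), so $g_j\in C^2_c(\C)$; $g_j$ vanishes on $\Ccal$ because it carries the factor $C$; and $g_j\equiv 0$ in a neighborhood of $\Zcal$ because $\supp g_j\subset B_j$ is disjoint from $\Zcal$. Since $x_j\notin\Zcal$ and, by hypothesis, $\int |x-x_j|^{-1}\,\dd\mu(x)<\infty$, Lemma~\ref{crit2} gives that \eqref{criteq} holds for $h_j\defeq \tfrac{g_j(x)}{x-x_j}=\tfrac{A C}{\prod_{l=1}^M(x-x_l)}\,\eta_j$.

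Finally, since $\chi+\sum_{j=1}^M\eta_j\equiv 1$, one has $h_0+\sum_{j=1}^M h_j=\tfrac{A C}{\prod_{l=1}^M(x-x_l)}$, which is precisely the function in \eqref{criteq2}; as all the integrals involved are finite (by the two lemmas) and both sides of \eqref{criteq} are linear in $h$, adding the instances of \eqref{criteq} for $h_0$ and for the $h_j$ proves the corollary. The only points needing a little care are the verification that each $g_j$ lies in $\Hcal$ — in particular that it vanishes in a whole neighborhood of $\Zcal$, which is exactly why the discs $B_j$ are taken disjoint from $\Zcal$ — and checking that the relevant integrals remain absolutely convergent, so that the concluding linearity step is legitimate.
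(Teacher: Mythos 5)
Your proof is correct and follows essentially the same decomposition as the paper: split $h$ into a far-field part with cutoff $\chi = 1-\sum_j\eta_j$ handled by Lemma~\ref{crit1}, plus local pieces $h_j=g_j/(x-x_j)$ near each $x_j$ handled by Lemma~\ref{crit2}, then add by linearity. The paper builds the bump functions $\chi_j$ explicitly from discs of radius $\delta=\tfrac14\min\{|x-y|:x,y\in X\cup\Zcal,\,x\neq y\}$, but this is only a cosmetic difference from your choice of the discs $B_j$.
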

	
	\begin{proof}
		For this proof, set
		$$
			\delta \defeq \frac{1}{4} \min \left\{ |x - y|, x,y \in X \cup \Zcal, x \neq y  \right\}.
		$$
		For each $1 \leq j \leq M$, fix a $C_c^2$ function $\chi_j: \C \to [0,1]$, supported on $D_{2\delta}(x_j)$ with $\chi_j \equiv 1$ on $D_\delta(x_j)$. Introduce
		$$
			g_j(x) \defeq \left(\prod_{\substack{i=1\\ i \neq j}}^M \frac{1}{x - x_i} \right)  A(x) C(x) \chi_j(x)  \quad \text{and}\quad 	
			h_j(x) \defeq \chi_j(x) h(x) = \frac{g_j(x)}{x - x_j}.
		$$
		Then $g_j\in \Hcal$, so $h_j$ satisfies the condition of Lemma \ref{crit2}. On the other hand, 
		$$
			h(z) - \sum_{j=1}^M h_j(z) = \frac{A(z)C(z)}{\prod_{j=1}^M(x-x_j)}\chi(z), \quad \chi(z) \defeq 1 - \sum_{j=1}^M \chi_j(z)
		$$ 
		satisfies the hypotheses of Lemma \ref{crit1}. Thus \eqref{criteq} is valid for each $h_j$ and also for $h-\sum h_j$. Consequently, by linearity, \eqref{criteq} also holds for $h$.
	\end{proof}
	
	\begin{remark}
	The introduction of the points $x_1,\hdots, x_M$ was done for technical reasons. In essence, it is present to ensure that with the choice \eqref{criteq2}, the product $h\Phi'$ is integrable near $\infty$. Later on, we will show that critical measures are in fact compactly supported, which then will mean that we can replace the factor $\prod (x-x_j)$ simply by $1$ in all the proofs.
	\end{remark}
		
For the next result, recall that $\CT^\mu$ denotes the Cauchy transform of the measure $\mu$, see \eqref{deff:logpotCauchytransf}. The next result is one of the key steps in establishing Theorem~\ref{maintheorem}. Its main consequence is that the Cauchy transform of a critical measure satisfies an algebraic equation, namely Equation~\eqref{QuadEq}. The latter is known in the context of random matrix theory as the spectral curve or master loop equation.

Different versions of this result have been obtained in \cite{MartinezFinkelshtein2011} and \cite{Kuijlaars2015}. Here we adapt the proof of both mentioned papers, to accommodate for the singularities of $\Phi$ and the set of fixed points $\Ccal$.

	\begin{proposition}\label{prop:QuadraticEquation}
		Let $\mu\in \Mcal_1^\phi(\C)$ be a critical measure. Then, there exists a rational function $R$ such that
		\begin{equation}\label{QuadEq}
			\left( \CT^\mu(z) +  \Phi'(z)  \right)^2 = R(z), \quad \Leb-\text{a.e.}
		\end{equation}
		where $\Leb$ is the Lebesgue measure on $\C$. Furthermore, $R$ has the following properties.
		\begin{enumerate}[(i)]
		\item Each $z_j$ of order $m_j$ of $\Phi$ is a pole of order $2m_j+2$ of $R$.
		\item Each $w_i \in \Wcal$ is a double pole of $R$.
		\item Simple poles of $R$ are contained in the set of fixed points $\Ccal$.		
		\item There are no poles other than the ones in (i)--(iii).
		\item $R(z)=b^2 z^{2\deg B-2\deg A}(1+\Ord(z^{-1})), z\to \infty$, where $b\neq 0$ is the leading coefficient of the polynomial $B$.
		\end{enumerate}
	\end{proposition}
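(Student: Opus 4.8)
The plan is to feed into the criticality relation \eqref{criteq} a one-parameter family of \emph{Schiffer variations} and to extract the algebraic equation by comparing the two sides as functions of the parameter. Fix once and for all $M-1=\deg B+\deg C-1$ distinct points $x_1,\dots,x_{M-1}$ lying outside $\Zcal\cup\Ccal$ and satisfying $\int|x-x_j|^{-1}\,\dd\mu(x)<\infty$; such points are abundant, since $\Zcal\cup\Ccal$ is finite and the last condition fails only on a Lebesgue-null set. For any further point $\zeta$ of the same kind, $\{x_1,\dots,x_{M-1},\zeta\}$ is a set of $M=\deg B+\deg C$ admissible points, so Corollary~\ref{crit3} guarantees that \eqref{criteq} holds for
\[
	h_\zeta(x) \defeq \frac{A(x)C(x)}{(x-\zeta)\prod_{j=1}^{M-1}(x-x_j)} = \frac{r(x)}{x-\zeta}, \qquad r(x)\defeq \frac{A(x)C(x)}{\prod_{j=1}^{M-1}(x-x_j)}.
\]
Writing $s\defeq\CT^\mu$, the decisive point is that $s(x_1),\dots,s(x_{M-1})$ are mere constants, so evaluating both sides of \eqref{criteq} isolates all of the $\zeta$-dependence.

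On the right-hand side, $\Phi'(x)h_\zeta(x)$ is the ratio of $B(x)C(x)$ to $(x-\zeta)\prod_{j=1}^{M-1}(x-x_j)$, i.e.\ of two polynomials of equal degree $M$; its partial fraction decomposition has constant term $b$ (the leading coefficient of $B$), a term $\Phi'(\zeta)r(\zeta)/(x-\zeta)$, and simple-pole terms at the $x_j$, namely $\Phi'(x_j)\rho_j/[(x_j-\zeta)(x-x_j)]$ with $\rho_j\defeq\Res_{x=x_j}r$. Integrating against the probability measure $\mu$ and using $\int(x-a)^{-1}\dd\mu(x)=s(a)$ yields $2\int\Phi'h_\zeta\,\dd\mu=2b+2\Phi'(\zeta)r(\zeta)s(\zeta)+2\sum_j\Phi'(x_j)\rho_j s(x_j)/(x_j-\zeta)$. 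On the left-hand side, the elementary identity
\[
	\frac{h_\zeta(x)-h_\zeta(y)}{x-y} = -\frac{r(\zeta)}{(x-\zeta)(y-\zeta)} + \frac{1}{x-y}\left(\frac{r(x)-r(\zeta)}{x-\zeta} - \frac{r(y)-r(\zeta)}{y-\zeta}\right)
\]
splits off a term integrating to $-r(\zeta)s(\zeta)^2$, while in the remaining double divided difference the polynomial part of $r$ (of degree $\deg r=2-N\le1$) cancels, leaving $\sum_j\rho_j/[(\zeta-x_j)(x-x_j)(y-x_j)]$, whose double integral is $\sum_j\rho_j s(x_j)^2/(\zeta-x_j)$. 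Equating the two sides, completing the square in $s(\zeta)$, and dividing through by $r(\zeta)=A(\zeta)C(\zeta)/\prod_k(\zeta-x_k)$ gives, for Lebesgue-a.e.\ $\zeta$,
\[
	\big(s(\zeta)+\Phi'(\zeta)\big)^2 = \Phi'(\zeta)^2 - \Big(2b + \sum_{j=1}^{M-1}\frac{\kappa_j}{\zeta-x_j}\Big)\frac{\prod_{k=1}^{M-1}(\zeta-x_k)}{A(\zeta)C(\zeta)},
\]
for suitable constants $\kappa_j$ depending only on $\mu$ and the $x_j$. The right-hand side is manifestly rational; calling it $R$ establishes \eqref{QuadEq}.

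It remains to read off the poles. In the formula above, the factor $\prod_k(\zeta-x_k)$ annihilates the would-be poles at the auxiliary points $x_j$, so every pole of $R$ lies among the zeros of $A$ or $C$, i.e.\ in $\Zcal\cup\Ccal$. At a pole $z_j$ of $\Phi$ of order $m_j$, the term $\Phi'(\zeta)^2=B(\zeta)^2/A(\zeta)^2$ has a pole of order $2m_j+2$ whose leading singular coefficient is a nonzero multiple of $B(z_j)^2$ (this is precisely where the hypothesis that $B$ does not vanish on $\Zcal$ enters), and this strictly dominates the pole of order at most $m_j+1$ coming from the second term; hence $R$ has a pole of order exactly $2m_j+2$ there, which is even and, since $m_j\ge1$, at least $4$. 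This proves~(i). The identical argument at a logarithmic singularity $w_i$, where $A$ vanishes simply and $B(w_i)\neq0$, produces a pole of $R$ of order exactly $2$, which is~(ii). At a fixed point $c_i\in\Ccal$ the term $\Phi'(\zeta)^2$ is regular while $C$ vanishes simply, so $R$ has a pole of order at most $1$; this gives~(iii) and, together with the preceding,~(iv). Finally,~(v) follows by inspecting the formula for $R$ near $\infty$: since $N-1=\deg B-\deg A$, one has $\Phi'(\zeta)^2=b^2\zeta^{2N-2}\big(1+\Ord(\zeta^{-1})\big)$, while the second term is $\Ord(\zeta^{N-2})$ and hence of lower order, so $R(\zeta)=b^2\zeta^{2\deg B-2\deg A}\big(1+\Ord(\zeta^{-1})\big)$.

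The principal difficulty is to justify the convergence of all the integrals above and the legitimacy of the rearrangements when $\mu$, which only satisfies the growth condition, may fail to have compact support, so that polynomial moments of $\mu$ are unavailable. This is exactly the role of the auxiliary points: the denominator $\prod_{j=1}^{M-1}(x-x_j)$ --- where $M=\deg B+\deg C$ is taken minimally so that $\Phi'h_\zeta$ stays bounded at $\infty$, as in the Remark following Corollary~\ref{crit3} --- forces every integrand to decay there, while the integrability hypotheses at the $x_j$ and at $\zeta$ control their local singularities; this makes the dominated-convergence arguments behind Corollary~\ref{crit3} and the applications of Fubini above work, the only bookkeeping subtlety being the harmless polynomial parts of $r$ and of $\Phi'h_\zeta$. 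The second, more structural, point is the exact determination of the pole orders: the strict inequality $2m_j+2>m_j+1$ together with the non-vanishing of $B$ on $\Zcal$ is what rules out cancellation in the dominant singular term, pinning the orders down rather than merely bounding them.
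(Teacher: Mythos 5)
Your proof is correct and follows essentially the same route as the paper: both fix $M-1$ auxiliary points $x_j$ with $\int|x-x_j|^{-1}\,\dd\mu<\infty$, apply Corollary~\ref{crit3} to the Schiffer variation $h_\zeta(x)=r(x)/(x-\zeta)$, identify the terms quadratic and linear in $\CT^\mu(\zeta)$, and then read off the rational structure and pole orders from the non-vanishing of $B$ on $\Zcal$ and the degree bookkeeping. The only difference is presentational — you carry out an explicit partial-fraction decomposition of $r$ and of $\Phi'h_\zeta$, arriving at a closed formula for $R$, whereas the paper packages the residual contributions into the auxiliary integrals $D_1,D_2$ and shows abstractly (via a polynomial divided-difference argument) that they have only simple poles at the $x_j$; both yield the identity \eqref{eq:aux1} and the same conclusion.
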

	\begin{proof}
		Recall that we denote $M =\deg B + \deg C$. The function
		$$
		z\mapsto \int \frac{\dd\mu(x)}{|x-z|}
		$$
		may be viewed as the convolution of $1/|x|\in L^1_{\mathrm{loc}}(\dd \Leb)$ with the measure $\mu$. Hence, this function belongs to $L^1(\dd \Leb)$, so it is finite $\Leb$-a.e.. In particular, we can choose points $x_1,\hdots, x_{M-1} \notin \Zcal$ such that
		\begin{equation}\label{condzj}
			\int \frac{1}{|x-x_j|} \, \dd \mu(x) < +\infty, \quad 1 \leq j \leq M-1,
		\end{equation}
		and furthermore it suffices to prove Equation \eqref{QuadEq} for $z\in  \C \setminus (\{x_1,\ldots,x_{M-1}\}\cup \Zcal )$ for which 
		\begin{equation}\label{condz2}
		\int \frac{1}{|x-z|} \, \dd \mu(x) < +\infty.
		\end{equation}

		Define
		$$
			p(x) \defeq \prod_{j=1}^{M-1} \frac{1}{(x - x_j)}, \quad
			h(x) = h(x;z) \defeq \frac{p(x)}{x-z} A(x) C(x).
		$$
The function $h$ satisfies the conditions of Corollary \ref{crit3}, and therefore Equation \eqref{criteq} is valid for it. Our next goal is to rewrite both sides of \eqref{criteq} for this $h$.
		
We start by writing
		\begin{equation}\label{D_1}
			\int \Phi'(x) h(x) \, \dd \mu(x) = p(z) B(z) C(z) \CT^\mu(z) + D_1(z),
		\end{equation}
		where  
		$$
			D_1(z) \defeq \int \frac{p(x) B(x) C(x) -  p(z) B(z) C(z)}{x-z} \, \dd\mu(x).
		$$
		We claim that the only possible poles of $D_1$ are the points $x_1,\ldots,x_{M-1}$. To see that, start by putting $q(w) = 1/p(w)$ and write 
		\begin{align*}
			\frac{p(x) B(x) C(x) -  p(z) B(z) C(z)}{x-z} 
			&= p(x)p(z) \left[B(x)C(x)q(z) - B(z)C(z)q(x) \right] \frac{1}{x-z}
		\end{align*}
		The expression in brackets is a polynomial in $x$ and $z$. Fixing $z$, we see it as a polynomial in $x$ that has a root in $x=z$. Thus, it can be written as 
		$$
			(x-z) P_1(x,z)
		$$
		where $P_1$ is a polynomial in $x$ and $z$ of degree $M - 1$, so
		$$
			\frac{p(x) B(x) C(x) - p(z) B(z)C(z)}{x-z} = p(x)p(z) P_1(x,z).
		$$
		This yields the representation
		\begin{equation}\label{eq:reprD1int}
			D_1(z) = p(z) \int p(x) P_1(x,z) \, \dd \mu(x).
		\end{equation}
		Note that the integral above converges. In fact, it is sufficient to show that it converges in a neighborhood of $x = x_j$ and $x = \infty$. As  $x \to x_j$, $P_1(x,z)$ is bounded and $(\ref{condzj})$ implies that it converges in a neighborhood of $x_j$. Also, since $p(x) P_1(x,z) = \Ord(1)$ as $x \to \infty$, the integral too converges near $x=\infty$. Thus, the integral on the right-hand side of \eqref{eq:reprD1int} yields a polynomial of degree $M-1$ in $z$, hence $D_1$ has possible poles only at the poles of $p$ itself, and which are necessarily simple. Furthermore we also conclude that
\begin{equation}\label{eq:behD1}
		D_1(z)=\Ord(1)\quad \text{as }z\to\infty.
\end{equation}

		Next, we shift our attention to the quotient $\frac{h(x) - h(y)}{x-y}$. Start by writing
		\begin{multline*}
			\frac{h(x) - h(y)}{x-y} 
			= \\ 
			\frac{(y-z)p(x)A(x)C(x) 
			+ (z-x)p(y)A(y)C(y) 
			+ (x-y)p(z)A(z)C(z)}{(x-y)(x-z)(y-z)} - \frac{p(z)A(z)C(z)}{(x-z)(y-z)}.
		\end{multline*}
		Note that the second term on the right hand side is $\dd \mu(x) \times \dd \mu(y)$ integrable since we are assuming (\ref{condz2}). It follows that
\begin{equation}\label{eq:D2}
			\iint \frac{h(x)-h(y)}{x-y} \, d\mu(x) \, d\mu(y) = -p(z)A(z)C(z)(C^\mu(z))^2 + D_2(z),
\end{equation}
		where
		$$
			D_2(z) \defeq \iint \frac{(y-z)p(x)A(x)C(x) + (z-x)p(y)A(y)C(y) + (x-y)p(z)A(z)C(z)}{(x-y)(x-z)(y-z)} \, d\mu(x) \, d\mu(y).
		$$
An argument similar to the one we just performed for $D_1$ shows that $D_2$ may be represented in the (convergent) double integral formula
$$
D_2(z)=p(z)\iint p(x)p(y)P_2(x,y,z)\dd\mu(x)\dd\mu(y),
$$		
where $P_2$ is a polynomial of degree at most $M-2$ in each of its variables $x,y,z$. This shows that $D_2$ is a rational function as well, with only simple poles, all contained in the set $\{x_1,\hdots, x_{M-1}\}$, and furthermore
\begin{equation}\label{eq:behD2}
			D_2(z) = \Ord(z^{-1}), \quad z \to \infty.
\end{equation}
Combining \eqref{D_1} and \eqref{eq:D2} into \eqref{criteq}, we obtain the identity
		$$
			D_2(z) - p(z)A(z)C(z)(C^\mu(z))^2 = 2p(z)\Phi'(z)A(z)C(z)C^\mu(z) + 2 D_1(z).
		$$
		Now, this is equivalent to (\ref{QuadEq}) if we put
\begin{equation}\label{eq:aux1}
		\begin{aligned}
			R(z) &= \left(\frac{D_2(z) - 2 D_1(z)}{p(z)} \right) \frac{1}{A(z)C(z)} +  [\Phi'(z)]^2 \\
				&= \frac{1}{A(z)^2 C(z)} \left( \frac{D_2(z)- 2 D_1(z)}{p(z)}A(z) + B(z)^2C(z) \right)
		\end{aligned}
\end{equation}
		As we mentioned, the poles of $D_1$ and $D_2$ are simple and contained in the set $\{x_1,\hdots, x_M\}$, which are all zeroes of $p^{-1}$. Thus, the expression inside the parenthesis is a polynomial. Using \eqref{eq:behD1} and \eqref{eq:behD2}, and the fact that $\deg p^{-1}=M=\deg B+\deg C$, we in fact obtain that the term between brackets is a polynomial of degree exactly $2\deg B+\deg C$, so $R$ is rational, and the same analysis also shows that (v) holds. Furthermore, poles of $\Phi'$ are necessarily poles of $R$ with doubled order, and simple poles may only come from the zeros of $A$ and $C$, that is, from the set $\Zcal\cup \Ccal$, and parts (i)--(iv) follow as claimed. 
	\end{proof}

With \eqref{QuadEq} at hand, we obtain qualitative info about the support and density of critical measures in a standard way.
	
	\begin{proposition}\label{QuadDiffChar}
Let $\mu\in \Mcal_1^\phi(\C)$ be a critical measure, with associated algebraic equation \eqref{QuadEq}. 	Then $\mu$ is supported on a union of analytic arcs, that are maximal trajectories of the quadratic differential $\varpi= -R(z) \dd z^2$ on $\overline\C$. Moreover, $\mu$ has no mass points, and in the interior of any arc of $\supp \mu$ it is absolutely continuous with respect to the arclength measure, with density
\begin{equation}\label{eq:densitycritmeasure}
			\dd \mu(s) = \frac{1}{\pi \ii} \sqrt{R(s)} \, \dd s,
\end{equation}
		where $\dd s$ is the complex line element.
		\end{proposition}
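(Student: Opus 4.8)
The plan is the standard two–stage argument. In the first stage I would exploit the algebraic identity \eqref{QuadEq} to show that $\CT^\mu$ is, locally off $\supp\mu$, a single branch of the two–valued function $-\Phi'\pm\sqrt R$, and that $\supp\mu$ must lie on the locus where the two branches get glued — which will turn out to be a finite union of trajectories of $\varpi=-R\,\dd z^2$. In the second stage one applies the Sokhotski--Plemelj formula on each such arc to compute the density and checks that positivity of $\mu$ is equivalent to the trajectory condition. Before anything, note that $\mu$ carries no mass point: since $\mu\in\Mcal_1^\phi(\C)$ has finite logarithmic energy, a would–be atom at $z_0$ gives $\U^\mu(z_0)=+\infty$ while $\mu(\{z_0\})>0$, forcing $\I(\mu)=\int\U^\mu\,\dd\mu=+\infty$, a contradiction.

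For the first stage, set $\Omega\defeq\overline\C\setminus\supp\mu$, which is open and on which $\CT^\mu$ is analytic. Both sides of \eqref{QuadEq} are analytic on $\Omega\setminus\Zcal$ and agree $\Leb$–a.e., hence everywhere on $\Omega\setminus\Zcal$; thus on any simply connected subset of $\Omega$ avoiding $\Zcal$ and the (finitely many) zeros and poles of $R$ one has $\CT^\mu=-\Phi'+\sqrt R$ for a single–valued branch of $\sqrt R$. I would then invoke the structure theory of quadratic differentials as collected in Appendix~\ref{sec:QDs} (following Martínez-Finkelshtein and Rakhmanov, and as in \cite{Kuijlaars2015}): the relation $(\CT^\mu+\Phi')^2=R$ with $R$ rational implies that $\CT^\mu$ continues analytically across $\overline\C$ minus a finite union of open analytic arcs, each a trajectory of $\varpi=-R\,\dd z^2$, and $\supp\mu$ is contained in the closure of this union. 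In particular $\supp\mu$ is a finite union of maximal trajectories of $\varpi$, which away from the finitely many zeros and poles of $R$ are real–analytic arcs.

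For the second stage, fix $s_0$ in the relative interior of such an arc $\gamma\subset\supp\mu$, with $s_0$ not a zero or pole of $R$; orient $\gamma$ and write $\dd\mu=\psi\,\dd s$ near $s_0$. Since $\CT^\mu(z)=\int\dd\mu(x)/(x-z)$, Sokhotski--Plemelj gives $\CT^\mu_+(s_0)-\CT^\mu_-(s_0)=2\pi\ii\,\psi(s_0)$. On each side of $\gamma$ the identity \eqref{QuadEq} persists with continuous boundary values, so $\CT^\mu_\pm(s_0)+\Phi'(s_0)=\varepsilon_\pm R_+(s_0)^{1/2}$ with $\varepsilon_\pm\in\{+1,-1\}$ for a fixed boundary branch $R_+^{1/2}$; if $\varepsilon_+=\varepsilon_-$ the two boundary values coincide and $\psi(s_0)=0$, so at points genuinely charged by $\mu$ the signs are opposite and $\CT^\mu_+(s_0)-\CT^\mu_-(s_0)=2R_+(s_0)^{1/2}$, giving
$$
\dd\mu(s)=\frac{1}{\pi\ii}\sqrt{R(s)}\,\dd s
$$
on the interior of each arc, with $\sqrt R$ the branch making the right–hand side a nonnegative measure. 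Finally, this nonnegativity means $\frac{1}{\pi\ii}\sqrt{R(s)}\,\dd s\ge 0$ along $\gamma$, equivalently $R(s)\,(\dd s)^2<0$, i.e.\ $-R(s)\,\dd z^2>0$ along $\gamma$ — which is precisely the defining condition for $\gamma$ to be a horizontal trajectory of $\varpi=-R\,\dd z^2$, consistent with the first stage.

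I expect the main obstacle to be the first stage: passing from the $\Leb$–a.e.\ algebraic identity to the assertion that $\supp\mu$ is a \emph{finite} union of \emph{analytic} arcs which are trajectories of $\varpi$. This is where the global and local theory of quadratic differentials on $\overline\C$ enters — one must exclude recurrent, domain–filling trajectories from the support and analyze the behavior of trajectories near zeros and poles of $R$ and near the points of $\Zcal\cup\Ccal$ — and it is for this reason that the relevant facts are relegated to Appendix~\ref{sec:QDs}. By comparison, the Plemelj computation of the density and the positivity reformulation are routine, though the branch and orientation bookkeeping must be tracked carefully to land the constant $1/(\pi\ii)$ and the correct sign.
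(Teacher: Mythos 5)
Your proposal is correct and follows essentially the same route as the paper, which likewise records the algebraic relation \eqref{QuadEq} and the no-atoms observation and then defers the remaining argument to \cite[Proposition~3.8]{Kuijlaars2015} (itself modeled on Martínez-Finkelshtein and Rakhmanov). One small remark on your exposition: the statement in your ``first stage'' that the a.e.\ identity $(\CT^\mu+\Phi')^2=R$ by itself implies that $\CT^\mu$ extends analytically off a finite union of trajectories is not an automatic consequence of the algebra or of quadratic-differential theory alone — it uses the continuity and superharmonicity of $\U^\mu+\phi$ (so that the branch-switching set is a level set of $\Re\int\sqrt R$) together with the positivity of $\mu$, which is precisely the input you only bring in during your ``second stage''; the two stages are not independent but form one intertwined argument, exactly as in the cited reference.
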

	\begin{proof}
	The key point here is that a critical measure $\mu$ satisfies an algebraic equation of the form \eqref{QuadEq}. Furthermore, by definition of the class $\Mcal_1^\phi(\C)$, the critical measure $\mu$ has finite log energy and therefore it cannot have mass points. Once these properties are known, the proof follows in the same way as in the proof of the analogous result \cite[Proposition~3.8]{Kuijlaars2015}, so we skip the details.
	\end{proof}

With Proposition~\ref{QuadDiffChar} at hand, we are able to establish some additional information on the support of critical measures.
	
	\begin{corollary}\label{cor:muIscompact}
		Let $\mu$ be a critical measure. The support of $\mu$ is a compact set made of a union of analytic arcs, that do not contain any of the zeroes of $Q$.
	\end{corollary}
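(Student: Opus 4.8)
The plan is to extract everything from Proposition~\ref{QuadDiffChar} together with the description of the poles of $R$ in Proposition~\ref{prop:QuadraticEquation}. Proposition~\ref{QuadDiffChar} tells us that $\supp\mu$ is a union of analytic arcs, each a trajectory of $\varpi=-R(z)\,\dd z^2$, on the interior of which $\mu$ has density $\frac{1}{\pi\ii}\sqrt{R(s)}\,\dd s$ with respect to arclength. The key reinterpretation is that, for any sub-arc $\gamma\subseteq\supp\mu$, the mass $\mu(\gamma)$ equals (up to the factor $1/\pi$) the length of $\gamma$ measured in the conformal metric $|R(z)|^{1/2}|\dd z|$. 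Since $\mu$ is a probability measure, no sub-arc of $\supp\mu$ can have infinite length in this metric, and the proof then amounts to showing that this forces $\supp\mu$ to stay away from the points where the metric degenerates.

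First I would classify those dangerous points. By Proposition~\ref{prop:QuadraticEquation}, every point of $\Zcal$ is a pole of $R$ of order at least $2$ --- order $2m_j+2\ge4$ at a pole $z_j$ of $\Phi$ and order $2$ at a logarithmic singularity $w_j$ --- so near such a point $p$ one has $|R(z)^{1/2}|\ge c\,|z-p|^{-1}$, the even order of the pole making $\sqrt R$ single-valued and meromorphic near $p$. Moreover, by part (v) of the same proposition $R(z)=b^2 z^{2(N-1)}(1+\Ord(z^{-1}))$ as $z\to\infty$ with $N\ge1$, so $|R(z)^{1/2}|$ is bounded below by a positive constant for large $|z|$. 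The only remaining poles of $R$ are simple ones lying in $\Ccal$, near which $|R^{1/2}|$ is integrable against arclength, so those are harmless --- and will in fact turn out to lie in $\supp\mu$.

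The core step is the contradiction argument. Suppose $p\in\Zcal\cap\supp\mu$. Since $\mu$ has no mass points, $p$ is not isolated in $\supp\mu$, and since a trajectory cannot pass through a pole of $R$, points of $\supp\mu$ accumulate at $p$ along trajectory arcs. Fix a disc $D_{r_0}(p)$ containing no zero of $R$ and no other point of $\Zcal$. For each small dyadic scale $2^{-k}r_0$ there is a point of $\supp\mu$ within distance $2^{-k}r_0$ of $p$; the maximal trajectory through it cannot terminate inside $D_{r_0}(p)$ (no critical points there) and cannot reach $p$ --- were it to approach $p$ it would already have infinite length, hence infinite mass --- so it must exit $D_{2^{-k+1}r_0}(p)$, and therefore crosses the annulus $\{2^{-k}r_0\le|z-p|\le 2^{-k+1}r_0\}$, leaving there a sub-arc of Euclidean length at least $2^{-k}r_0$. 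Combining with $|R^{1/2}|\gtrsim 2^{k}/r_0$ on that annulus and summing over $k$ gives $\mu(D_{r_0}(p))=+\infty$, a contradiction; hence $\supp\mu\cap\Zcal=\emptyset$, and in fact $\supp\mu$ avoids a whole neighbourhood of $\Zcal$. The same mechanism handles $\infty$, and here more cheaply: if $\supp\mu$ were unbounded it would contain an unbounded maximal trajectory arc, which has infinite Euclidean length and a tail of which lies where $|R^{1/2}|\ge c>0$, so again infinite mass. Thus $\supp\mu$ is bounded, and being closed it is compact. (Relative to \cite{Kuijlaars2015}, only the $\infty$ part of this argument was needed there; the exclusion of the finite singularities of $\Phi$ is the new ingredient.)

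It remains to see that $\supp\mu$ is a \emph{finite} union of analytic arcs. Now that $\supp\mu$ is compact and bounded away from all poles of $\varpi$ of order $\ge2$, it is a compact union of trajectories of a rational quadratic differential whose only critical points it can meet or accumulate at are the finitely many zeros of $R$ and the simple poles in $\Ccal$; by the structure theory for trajectories of quadratic differentials (Appendix~\ref{sec:QDs}; cf.\ \cite[Proposition~3.8]{Kuijlaars2015}) such a set is a finite union of analytic arcs joining these critical points. I expect the one genuinely delicate point to be the length/mass estimate near a higher-order pole --- specifically, confirming that a trajectory which may spiral into a neighbourhood of such a pole nonetheless yields the claimed lower bound on the arclength of $\supp\mu$ in each dyadic annulus; the rest is bookkeeping built on Propositions~\ref{prop:QuadraticEquation} and \ref{QuadDiffChar}.
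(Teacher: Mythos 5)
Your plan is sound and rests on exactly the same pillars as the paper: Proposition~\ref{QuadDiffChar} (support is a union of trajectories of $-R\,\dd z^2$ with density $\frac{1}{\pi\ii}\sqrt{R}\,\dd s$), Proposition~\ref{prop:QuadraticEquation} (poles of $R$ of order $\ge 4$ at finite poles of $\Phi$, order $2$ at logarithmic singularities, order $2(N+1)\ge 4$ at $\infty$, simple poles only at $\Ccal$), and the divergence of $\int|R|^{1/2}|\dd s|$ along an arc approaching a pole of order $\ge 2$. Where you diverge from the paper is in how you get a trajectory arc near a bad point. The paper simply quotes the local structure theorem for quadratic differentials: around each pole of order $\ge 2$ (including $\infty$) there is a neighbourhood $U$ such that \emph{every} trajectory meeting $U$ ends at that pole; hence if $\supp\mu$ met $U$ it would contain an arc approaching the pole, and the density integral would diverge. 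You instead run a dyadic-annulus mass count that avoids quoting this theorem directly, and a separate argument at $\infty$ using the trivial lower bound $|R^{1/2}|\gtrsim 1$.

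Two remarks on your version. First, the step ``the trajectory must exit $D_{2^{-k+1}r_0}(p)$'' is carrying more load than you acknowledge: you have excluded termination at a critical point (none in $D_{r_0}(p)$ if you also keep $D_{r_0}(p)$ clear of $\Ccal$, which you should say explicitly, though it is automatic since $\Ccal\cap\Zcal=\emptyset$ and $\Ccal$ is finite) and excluded the trajectory reaching $p$; but you also need to rule out a maximal trajectory staying forever in the punctured disc $D_{2^{-k+1}r_0}(p)\setminus\{p\}$ without exiting (closed or recurrent behaviour). That exclusion is again supplied by the same local structure theorem near a pole of order $\ge 2$ (closed trajectories are excluded since $\rho_j\notin\R$, and trajectories near the pole limit onto the pole), so in the end you are leaning on exactly the fact the paper cites, just less visibly; stated honestly, your dyadic argument becomes a somewhat longer rendition of the paper's. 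Second, the concern you flag at the end about spiralling is not actually an issue: the annulus-crossing lower bound on Euclidean arclength is all that is needed, and spiralling only makes the length larger. Overall your proof is correct in substance and takes essentially the paper's route; the dyadic bookkeeping is an equivalent but longer way to package the blow-up of the density near a pole of order $\ge 2$.
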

	\begin{proof}
	Thanks to Proposition~\ref{QuadDiffChar}, we already know that $\supp\mu$ is a union of analytic arcs, and it remains to prove
\begin{enumerate}[(1)]
\item $\supp\mu$ is compact,
\item $\supp\mu$ does not contain poles of $\Phi$.
\end{enumerate}	
	
		Propositions \ref{prop:QuadraticEquation} and \ref{QuadDiffChar} tell us that $\mu$ is supported on maximal trajectories of the quadratic differential $-R(z)\dd z^2$. Still from Proposition~\ref{prop:QuadraticEquation}, $R(z)\dd z^2$ has a pole of order $2(N+1)\geq 4$ at $z_0\defeq\infty$ (with $N$ as in \eqref{eq:PhiDef}), a pole of order $2(m_j+1)$ at each finite pole $z_j$ of order $m_j$ of $\Phi$, double poles at each  $w_j$ of $\Phi$, possibly simple poles at elements of $\Ccal$, and no other poles. 

From the general theory of quadratic differentials \cite{Pommerenke1975}, at each pole $z_j$ there is neighborhood $U$ of $z_j$ such that every trajectory intersecting $U$ ends up at $z = z_j$ in at least one direction. In particular, if $\gamma \cap U \neq \emptyset$ for some maximal trajectory $\gamma$ of $\supp \mu$, then this arc $\gamma$ ends at $z_j$. On the other hand, from the explicit expression \eqref{eq:densitycritmeasure} for the density of $\mu$ 
and the fact that $R(s)$ has growth of order at least $4$ when $s \to z_j$ over the trajectories, we see that if $\gamma$ were to extend to a pole $z_j$, then $\mu$ would not be a finite measure. Analogously, near $w_i \in \Wcal$ whose residue of $\sqrt{-R(z)}$ is not purely imaginary, the density \eqref{eq:densitycritmeasure} would blow up linearly, and therefore $\mu$ would not be finite. This shows the claimed properties (1) and (2).

%$R$ is a rational function and therefore the possible trajectories are critical trajectories that connect zeroes and simple poles of $R$. Since there are only a finite number of them, the union is over a finite number of arcs.		

\end{proof}	

The quadratic differential $-R(z)\dd z^2$ has double poles at the points $w_j$'s, and around these points, as said, the trajectories consist of closed curves winding around $w_j$. As such, it could be possible that the support of a critical measure contains infinitely many of these closed trajectories. However, if in addition we assume that $\mu$ is also an equilibrium measure, then we can rule out such possibility, as claimed by the next proposition. For our final goal of understanding max-min problems, the critical measures of relevance will always be equilibrium measures as well, and the next proposition will become particularly useful.

\begin{proposition}\label{prop:critequilibrium}
	Assume that $\mu$ is a critical measure that is also an equilibrium measure on the external field $\phi = \Re \Phi$ for a set $S$. Suppose that the support of $\mu$ contains a connected component $\Gamma$ that is a trajectory that winds around a singularity $w_j$. Let $U$ be the bounded connected component of $\C \setminus \Gamma$. Then $\supp \mu \cap U = \emptyset$. In particular, a critical measure that is an equilibrium measure has at most one connected component winding around a  $w_j \in \Wcal$.
\end{proposition}

\begin{proof}

	Trajectories that wind around a singularity can only be found when $w_j$ is a double pole of $R$ with residue of $\sqrt{-R(z)}$ purely imaginary, and in this case the whole neighborhood $U$ is composed of trajectories with the same behavior. For such a neighborhood $U$, one has
    \[
        R(s) = \frac{R_0(s)}{(s - w_j)^2}, \quad R_0(w_j) \neq 0
    \]
    where $R_0$ is analytic on $U$, thus the function $\sqrt{R}$ is well defined and meromorphic on $U$.

    For $\delta > 0$ let 
    \[
        U_\delta = U \setminus B(z_0,\delta).
    \] 
    
    We claim that for sufficiently small $\delta < 0$, the set $\supp \mu \cap U_\delta$
    is composed of a finite amount of trajectories. To prove that first note that if $\eta$ is a trajectory contained in $U_\delta \cap \supp \mu$, then
    \[
        \mu(\eta) 
              % = \int_{\eta} \sqrt{R(s)} \, \dd s 
            = \int_{\eta} | \sqrt{R(s)} | \, | \dd s | 
            \geq 2 \pi \delta \cdot \inf_{s \in \overline{U_\delta}} |\sqrt{R(s)} |   > 0.
    \] 
      Therefore, if $\eta_n$ denotes an enumerable pair-wise disjoint family of trajectories contained in $U_\delta \cap \supp \mu$, then
    \[
        \mu(\C) 
            \geq \sum_{n \geq 1} \mu(\eta_n) 
            = +\infty
    \]
    a contradiction. This proves the claim.

    We now prove the proposition. Assume that $\supp \mu \cap U \neq \emptyset$. Since $\supp \mu$ is composed of trajectories, there exist $\eta \subset U$ a trajectory that is entirely contained in $\supp \mu$. Let $\Gamma'$ be the trajectory of $\supp \mu \cap U$ that is the closest to $\Gamma$, i.e., that there is no trajectory of $\supp \mu \cap U$ in the ``ring-like'' domain whose boundary is given by $\Gamma \cap \Gamma'$.

    For each $y \in \Gamma'$, let $\alpha = \alpha_y$ be the orthogonal trajectory passing through $y$. We see $\alpha$ as a oriented contour that intersects $\Gamma$ once at a point $x$, passes through $y$ and then goes to $w_j$. 
    
    The function
    \[
        f(z) = \int_x^z \sqrt{R(s)} \, \dd s .
    \]
    is well defined in the ``ring-like'' domain whose boundary is given by  $\Gamma \cup \Gamma'$.
    The identity
    \[
        \Re f(z) = U^\mu(z) + \phi(z) - l_x, \quad l_x = U^\mu(x) + \phi(x).
    \]
    immediately follows from the definition of $f$ and the algebraic equation satisfied by $C^\mu$, $\phi$ and $R$. 
    Since $\Im f$ is constant along $\alpha$, $\Re f$ is monotone along $\alpha$ by the Cauchy-Riemann equations, thus, for all $z \in \alpha$
    \[
        0 = \Re f(x) < \pm \Re f(z) = \pm \left[ U^\mu(z) + \phi(z) - l_x \right]
    \]
    where the sign depends on the monotonicity of $\Re f$. In any case, identity above contradicts the Euler-Lagrange variational conditions \eqref{eq:EulerLagrange} for $\mu$ when one takes $z = y$. Thus $\supp \mu \cap U = \emptyset.$

	% Let $x \in O$ and $y \in S \cap U$. Let $\gamma$ be a curve connecting $w_j$ to $\Gamma$, not passing through $x$ nor $y$. 
	% Take a branch of $\sqrt{R}$ on $U \setminus \gamma$ and define the function
	% \[
	% 	f(z) = \int_x^z \sqrt{R(u)} \, \dd u + l_x , \quad l_x \defeq U^\mu(x) + \phi(x)
	% \]
	% Then for each $z \in U \cup \Gamma$,
	% \[
	% 	\Re f(z) = U^\mu(z) + \phi(z).
	% \]
	
	% In particular, for each $z \in \Gamma$
	% \[
	% 	\Re f(z) \equiv l_\mu
	% \]
	% where $l_\mu$ is the Euler-Lagrange variational constant of $\mu$.
	
	%  Now, let $\beta$ be the trajectory passing through $y$ and $\alpha$ be the orthogonal trajectory passing through $x$. Then $\alpha \cap \beta \neq \emptyset$, since $\beta$ winds around $w_j$, and $\alpha$ goes to $w_j$ in one direction. Moreover, $\alpha \cap \Gamma \neq \emptyset$.
	
	%  The level sets of $f$ are exactly the trajectories of $-R(s) \dd s^2$ contained in $U$. In particular, $f(z)$ must be monotonous on $\alpha$. 
	%  On the other hand, for each $z \in \beta$,
	%  \[
	%  	\Re f(z) = \Re f(y) = U^\mu(y) + \phi(y) \geq l_\mu
	%  \]	
	%  since $y \in S$ satisfy the Euler-Lagrange inequality. This shows that $f$ is monotonically increasing as $z$ traverses $\alpha$.
	 
	%  In particular, for each $z \in U$, if $z_\alpha$ is the intersection of $\alpha$ with the trajectory passing through $z$, we obtain
	%  \[
	%  	U^\mu(z) + \phi(z) = U^\mu(z_\alpha) + \phi(z_\alpha) > l_\mu  
	%  \]
	%  and therefore $z \notin \supp \mu$. 
\end{proof}

\begin{corollary}\label{cor:critequi}
	Let $\mu$ be a critical equilibrium measure. The support of $\mu$ is a compact set made of a finite union of analytic arcs.
\end{corollary}

\begin{proof}
	In Corollary \ref{cor:muIscompact} it was shown that the support of $\mu$ is compact, made of a union of analytic curves, composed of trajectories of the quadratic differential $-R(z) \dd z^2$, not containing any pole of $\Phi$, nor double poles of $R(z)$ whose residue of $\sqrt{-R(z)}$ is not purely imaginary.

	Since $R$ is a rational function, the remaining possible trajectories are critical trajectories that connect zeroes and simple poles of $R$, and trajectories that wind around a $w_j$. By the  Proposition \ref{prop:critequilibrium}, the amount of curves of the later case is bounded by $|\Wcal|$. 
\end{proof}

\subsection{Critical Sets}\label{sec:CritSets}

Recall that $\Hcal$ denotes the class of test functions introduced in \eqref{deff:testfunctions}.	

Variations $z \mapsto z + th(z)$ induce variations of measures, which we studied in detail in Section~\ref{sec:criticalmeasures}. In a similar way, it also defines variations of sets, that is, the analogue of Definition~\ref{deff:variationenergymeasure}, as we study next.
	
\begin{definition}\label{def:criticalset}
For $h\in \Hcal$ and a set $F\subset \overline\C$, let $F^t$ be the image of $F$ under the mapping $z\mapsto z+th(z)$, $t\in \R$.
We define the \emph{derivative of the energy functional at $F$ in the direction of $h$} by
	$$
		\D_h \I^\phi(F) \defeq \lim_{t \to 0}  \frac{\I^\phi(F^t) - \I^\phi(F)}{t},
	$$
	and say that a set $F$ is {\it critical} if $\D_h \I^\phi(F)=0$ for every $h\in \Hcal$.
\end{definition}	
	
It turns out that the derivative of the energy functional at a given set coincides with the derivative at the equilibrium measure of $F$.
	\begin{theorem}\label{teo:CritSet}
		Let $F \subset \C$ be a closed set with $\I^\phi(F) \in \R$, and $\mu(\phi,F)$ its equilibrium measure. Fix $h\in\Hcal$. Then $\D_h \I^\phi(F)$ exists and
		\begin{equation}\label{eq:critset}
			\D_h \I^\phi(F) = \D_h \I^\phi(\mu(\phi,F)).
		\end{equation}
	\end{theorem}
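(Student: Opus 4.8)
The plan is to reduce the statement to Proposition~\ref{MuDerivative} by exploiting the extremality of the equilibrium measure, which kills the first-order contribution of the variation of $\mu(\phi, F)$ itself. Write $\mu = \mu(\phi,F)$ and let $\mu^t$ denote the pushforward of $\mu$ under $z\mapsto z+th(z)$, and $F^t$ the image of $F$. The key structural observation is: since $h\in\Hcal$ vanishes at the fixed points $\Ccal$ and near $\Zcal$, the map $z\mapsto z+th(z)$ is, for $|t|$ small, a $C^2$-diffeomorphism of $\overline\C$ onto itself (the Jacobian is $I + tDh$, invertible for small $t$ since $Dh$ is bounded), carrying $F$ onto $F^t$; hence it carries $\Mcal_1^\phi(F)$ onto $\Mcal_1^\phi(F^t)$ and in particular $\mu^t\in\Mcal_1^\phi(F^t)$. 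Therefore
\begin{equation}\label{eq:sandwichplan}
\I^\phi(F^t) = \inf_{\nu\in\Mcal_1^\phi(F^t)}\I^\phi(\nu) \leq \I^\phi(\mu^t),
\end{equation}
which gives the one-sided bound $\I^\phi(F^t)-\I^\phi(F)\le \I^\phi(\mu^t)-\I^\phi(\mu)$ for all small $t$ (using $\I^\phi(F)=\I^\phi(\mu)$). Dividing by $t>0$ and $t<0$ separately and letting $t\to 0$, and using that $\D_h\I^\phi(\mu)$ exists by Proposition~\ref{MuDerivative}, this already shows that \emph{if} $\D_h\I^\phi(F)$ exists it must equal $\D_h\I^\phi(\mu)$; but one still has to prove existence of the limit defining $\D_h\I^\phi(F)$, i.e.\ the matching lower bound.

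For the reverse inequality I would run the same argument backwards in a symmetric fashion. For small $t$, let $\mu_t := \mu(\phi, F^t)$ be the equilibrium measure of $F^t$ (which exists and has finite energy: $\I^\phi(F^t)$ is finite because $F^t$ is a bi-Lipschitz-with-small-distortion image of $F$, so its weighted energy is close to $\I^\phi(F)\in\R$ — this needs a short continuity argument, e.g.\ comparing logarithmic energies under a near-isometry). Push $\mu_t$ \emph{back} by the inverse map $z\mapsto$ (the inverse of $w\mapsto w+th(w)$), obtaining a measure $\widetilde\mu_t\in\Mcal_1^\phi(F)$, whence $\I^\phi(F)\le\I^\phi(\widetilde\mu_t)$. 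The map pushing $\mu_t$ back to $\widetilde\mu_t$ is, to first order in $t$, the variation $z\mapsto z - th(z)$; so one wants to say $\I^\phi(\widetilde\mu_t) = \I^\phi(\mu_t) + t\,\D_{-h}\I^\phi(\mu_t) + o(t) = \I^\phi(F^t) - t\,\D_h\I^\phi(\mu_t) + o(t)$. Combining with $\I^\phi(F)\le\I^\phi(\widetilde\mu_t)$ yields $\I^\phi(F^t)-\I^\phi(F)\ge t\,\D_h\I^\phi(\mu_t)+o(t)$, which together with \eqref{eq:sandwichplan} pins down the limit provided $\D_h\I^\phi(\mu_t)\to\D_h\I^\phi(\mu)$ as $t\to 0$.

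The main obstacle, and the part requiring genuine care rather than bookkeeping, is controlling the error terms and the convergence $\mu_t\to\mu$ uniformly enough to conclude $\D_h\I^\phi(\mu_t)\to\D_h\I^\phi(\mu)$ and that the $o(t)$ in the expansion of $\I^\phi(\widetilde\mu_t)$ is uniform in $t$. Two ingredients make this tractable: first, the explicit formula in Proposition~\ref{MuDerivative} shows $\D_h\I^\phi(\nu)$ depends on $\nu$ only through the bilinear quantity $\iint\frac{h(x)-h(y)}{x-y}\,\dd\nu(x)\dd\nu(y)$ and the linear one $\int\Phi'h\,\dd\nu$, both with bounded continuous integrands (here boundedness of $\frac{h(x)-h(y)}{x-y}$ and of $\Phi' h$ on $\supp h$ uses $h\in\Hcal$, exactly as in the proof of Lemma~\ref{crit1}), so it is weak-$*$ continuous in $\nu$; second, a standard $\Gamma$-convergence / lower-semicontinuity argument (upper semicontinuity of $\I^\phi$ from Theorem~\ref{RakhmanovTheorem} applied along the Hausdorff-convergent family $F^t\to F$, plus tightness from the growth condition \eqref{eq:GrowthCondGamma} which is stable under the perturbation since $h$ is compactly supported) forces $\mu_t\wconv\mu$. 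With these two facts the error analysis closes, and one reads off $\D_h\I^\phi(F)=\D_h\I^\phi(\mu)$ as claimed. I would also remark that the inequality \eqref{eq:sandwichplan} alone, applied with both signs of $t$, is what does the real work; the lower-bound half is essentially the same estimate with the roles of $F$ and $F^t$ interchanged, so the write-up can be organized to avoid duplicating the perturbation estimates.
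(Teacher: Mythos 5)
Your high-level structure — a two-sided sandwich argument reducing everything to comparisons $\I^\phi(F^t)\le\I^\phi(\mu_F^t)$ and $\I^\phi(F)\le\I^\phi(\mu_{F^t}^{-t})$, Taylor-expanding the energies of the pushforwards, and then needing $\D_h\I^\phi(\mu_{F^t})\to\D_h\I^\phi(\mu_F)$ — is exactly what the paper does. The differences are in how you propose to establish the two technical facts that make the sandwich close, and one of those two is where your plan has a genuine gap.

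For the convergence $\mu_{F^t}\to\mu_F$, you propose a semicontinuity-plus-tightness route (Rakhmanov's upper semicontinuity of $\I^\phi$ along $F^t\to F$ in Hausdorff distance, lower semicontinuity of the energy on measures, uniqueness of the minimizer). This is a plausible, genuinely different route from the paper. The paper instead shows that the mutual energy $\I(\mu_F^t-\mu_{F^t})$ tends to zero (by comparing $\I^\phi(\mu_{F^t})$, $\I^\phi(\mu_F^t)$, and $\I^\phi(\tfrac{1}{2}(\mu_F^t+\mu_{F^t}))$ and using that $\mu_{F^t}$ minimizes on $F^t$) and then invokes Lemma~\ref{lem:MutualEnergy}; this gives vague convergence of $\mu_{F^t}-\mu_F^t$ to zero, hence $\mu_{F^t}\to\mu_F$ vaguely, with no appeal to Rakhmanov's theorem, tightness, or subsequence arguments. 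The paper's route is more elementary and produces exactly the (vague) mode of convergence that the downstream lemma needs.

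The gap is in your claim that $\nu\mapsto\D_h\I^\phi(\nu)$ is ``weak-$*$ continuous'' because the integrands $\frac{h(x)-h(y)}{x-y}$ and $\Phi'h$ are bounded and continuous. That is not sufficient: vague convergence of probability measures only controls integrals against functions vanishing at infinity, and neither $\frac{h(x)-h(y)}{x-y}$ nor, more delicately, the double-integral kernel is compactly supported (for $x\in\supp h$ with $h(x)\ne0$ the kernel behaves like $h(x)/(x-y)$ in $y$ and has no compact support in the pair $(x,y)$). This is precisely the subtlety the paper singles out as a flaw in the corresponding step of the earlier reference \cite{Kuijlaars2015}, and Lemma~\ref{lem:WeakConvDoubleIntegral} is devoted to fixing it: one has to split off the far-field contribution, show it is $\Ord(1/R)$ uniformly in $n$, and then apply Portmanteau-type arguments to the near-field part. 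Without an argument of this type your plan does not yet yield the needed convergence $\D_h\I^\phi(\mu_{F^t})\to\D_h\I^\phi(\mu_F)$ even under weak (as opposed to vague) convergence of $\mu_{F^t}$, since the kernel is not in $C_0(\C\times\C)$. You should either reprove the content of Lemma~\ref{lem:WeakConvDoubleIntegral} or cite it explicitly; simply asserting weak-$*$ continuity repeats the very error the paper corrects.
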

	
	This result was first presented in \cite{Rakhmanov2012}. The proof follows the ideas of \cite{Kuijlaars2015} very closely.  There is a minor mistake on the proof of Proposition 4.2 in \cite{Kuijlaars2015}, where it is said that if $h \in C^2_c$, then
	$$
		(x,y) \mapsto \frac{h(x) - h(y)}{x-y}
	$$ 
	is compactly supported. This is not true: for $x\in \supp h$ with $h(x)\neq 0$ and $y\in \C\setminus \supp h$, this quotient reduces to $h(x)/(x-y)\neq 0$. Before we move to the proof of Theorem~\ref{teo:CritSet} we solve this issue with Lemma \ref{lem:WeakConvDoubleIntegral} below.

	 We shall denote by $\Mcal_0$ the set of signed measures defined on $\C$, whose positive and negative parts belong to $\Mcal_1(\C)$ and have finite logarithmic energy. A sequence of finite signed measures $(\sigma_n)$ \emph{converges vaguely} (resp. weakly) to a finite signed measure $\sigma$ when
	$$
		\lim_{n \to \infty} \int f \, \dd \sigma_n = \int f \, \dd \sigma
	$$
	for every compactly supported continuous function $f$ (resp. for every bounded continuous function $f$). 
	
In what follows, we also talk about weak and vague convergence of a family of measures $(\mu_t)=(\mu_t)_{t\in \R\setminus\{0\}}$ as $t\to 0$ to a measure $\mu_0$. By such convergence, we mean that that $\mu_{t_n}\to \mu_0$ in the weak/vague sense, for any sequence of indices $(t_n)$ with $t_n\to 0$ as $n\to \infty$, and write $\mu_t\stackrel{t\to 0}{\to}\mu_0$ in the weak/vague sense.
	
	The following result is well known for compactly supported signed measures, and it was extended to allow for unbounded support in \cite{Kuijlaars2015}.
	
\begin{lemma}[{\cite[Lemma~4.3]{Kuijlaars2015}}]\label{lem:MutualEnergy}
	The functional
	$$
		\Mcal_0 \ni \sigma \mapsto \I(\sigma) = \iint \log |x-y|^{-1} \, \dd\sigma(x) \, \dd \sigma(y)
	$$
	is well-defined on $\Mcal_0$ and strictly positive for $\sigma \neq 0$. If $(\sigma_n)$ is a sequence of signed measures in $\Mcal_0$ with
	$$
		\lim_{n \to \infty} \I(\sigma_n) = 0,
	$$
	then the sequence $(\sigma_n)$ converges vaguely to the null measure.
\end{lemma}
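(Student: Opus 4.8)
The statement is the classical energy principle for signed measures of total mass zero, here extended to allow unbounded support. The plan is to first prove well-definedness, then strict positivity, and finally deduce the vague-convergence assertion by a Cauchy--Schwarz argument against potentials of smooth test functions.

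\textbf{Well-definedness.} I would write $\sigma=\sigma^+-\sigma^-$ for the Jordan decomposition, so that $\sigma^\pm\in\Mcal_1(\C)$ and in particular $\sigma(\C)=0$, and expand
$$
\I(\sigma)=\I(\sigma^+)+\I(\sigma^-)-2\iint\log\tfrac1{|x-y|}\,\dd\sigma^+(x)\,\dd\sigma^-(y).
$$
The self-energies are finite by hypothesis, and the mutual energy is finite: its negative part $\iint\log^+|x-y|\,\dd\sigma^+\,\dd\sigma^-$ is controlled by the growth condition \eqref{eq:GrowthCond} satisfied by $\sigma^\pm$, while its positive part is controlled by the positive parts of $\I(\sigma^\pm)$ using the layer-cake identity $\log^+\tfrac1{|x-y|}=\int_0^1\mathbf 1_{\{|x-y|<r\}}\tfrac{\dd r}{r}$ together with the elementary estimate $(\sigma^+\times\sigma^-)(|x-y|<r)\lesssim(\sigma^+\times\sigma^+)(|x-y|<2r)+(\sigma^-\times\sigma^-)(|x-y|<2r)$. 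The same bounds show $\I$ is finite on the real vector space $V$ of all finite signed measures of total mass $0$ whose positive and negative parts have finite energy and satisfy \eqref{eq:GrowthCond}; note $\Mcal_0\subset V$.

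\textbf{Strict positivity.} For $0\neq\sigma\in V$ I would invoke the Frullani/subordination identity $\log\tfrac1{|x-y|}=\tfrac12\int_0^\infty\big(\ee^{-t|x-y|^2}-\ee^{-t}\big)\tfrac{\dd t}{t}$. Because $\sigma(\C)=0$, integrating against $\dd\sigma\,\dd\sigma$ annihilates the $\ee^{-t}$ term (and likewise allows replacing $\ee^{-t|x-y|^2}$ by $\ee^{-t|x-y|^2}-1$), so after interchanging integrals
$$
\I(\sigma)=\frac12\int_0^\infty G(t)\,\frac{\dd t}{t},\qquad G(t)\defeq\iint\ee^{-t|x-y|^2}\,\dd\sigma(x)\,\dd\sigma(y)=\frac{1}{4\pi t}\int_{\R^2}\ee^{-|\xi|^2/(4t)}\,|\widehat\sigma(\xi)|^2\,\dd\xi .
$$
Thus $G(t)\geq0$ (Bochner's theorem), and $G(t)>0$ for every $t>0$ unless the Fourier transform $\widehat\sigma$ vanishes identically, i.e.\ unless $\sigma=0$; hence $\I(\sigma)>0$. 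The one genuinely delicate point is the Fubini interchange: one splits $\int_0^\infty=\int_0^1+\int_1^\infty$ and combines $|\ee^{-t|x-y|^2}-1|\leq\min(1,t|x-y|^2)$ with the mass-zero cancellation and \eqref{eq:GrowthCond} to produce an integrable dominating function. I expect this — the control near $t\to\infty$ for noncompactly supported $\sigma$, not the positivity itself — to be the main obstacle, and it is exactly the ingredient that upgrades the classical compact-support statement to $\Mcal_0$.

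\textbf{Vague convergence.} By the positivity just established and polarization, $B(\nu,\tau)\defeq\iint\log\tfrac1{|x-y|}\,\dd\nu\,\dd\tau$ is a finite, symmetric, positive-definite bilinear form on $V$, so $|B(\sigma_n,\tau)|^2\leq\I(\sigma_n)\,\I(\tau)$ for every $\tau\in V$. Given $f\in C_c^\infty(\C)$, I would set $\tau\defeq-\tfrac1{2\pi}\Delta f$: then $\tau$ is a compactly supported signed measure with smooth density, $\tau(\C)=0$ by the divergence theorem, it has finite energy, so $\tau\in V$; moreover $\U^\tau-f$ is a bounded harmonic function on $\C$ vanishing at $\infty$ (since $\U^\tau(z)=\Ord(|z|^{-1})$ because $\tau$ has zero mass), whence $\U^\tau\equiv f$. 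Therefore $B(\sigma_n,\tau)=\int\U^\tau\,\dd\sigma_n=\int f\,\dd\sigma_n$, and Cauchy--Schwarz gives $\big|\int f\,\dd\sigma_n\big|^2\leq\I(\sigma_n)\,\I(\tau)\to0$. Since each $\sigma_n$ has total variation $2$, a uniform approximation of an arbitrary $f\in C_c(\C)$ by functions in $C_c^\infty(\C)$ with supports in a fixed compact set upgrades this to $\int f\,\dd\sigma_n\to0$ for all $f\in C_c(\C)$, i.e.\ $\sigma_n\to0$ vaguely.
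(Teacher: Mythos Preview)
The paper does not give its own proof of this lemma; it is quoted verbatim as \cite[Lemma~4.3]{Kuijlaars2015} and used as a black box. So there is nothing to compare your argument against in this paper.

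Your outline is correct and standard. The strict-positivity part via the Frullani/Gaussian subordination identity is the usual route, and you correctly flag that the only nontrivial point beyond the compactly supported case is the Fubini justification, which is handled exactly by the growth condition \eqref{eq:GrowthCond}. Your vague-convergence argument is clean: representing $f\in C_c^\infty$ as $\U^\tau$ with $\tau=-\tfrac{1}{2\pi}\Delta f$, then applying Cauchy--Schwarz for the positive-definite form $B$, is precisely the right idea, and the density/total-variation step at the end is routine.

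One small point to tighten in the well-definedness paragraph: the ``elementary estimate'' $(\sigma^+\times\sigma^-)(|x-y|<r)\lesssim(\sigma^+\times\sigma^+)(|x-y|<2r)+(\sigma^-\times\sigma^-)(|x-y|<2r)$ is not obvious as stated and may not hold pointwise in $r$. A safer route is to note that finiteness of $\I(\sigma^\pm)$ forces $\U^{\sigma^\pm}\in L^1(\sigma^\pm)$, and then use the maximum-principle/lower-envelope bound $\U^{\sigma^+}\leq c_1+c_2\,\U^{\sigma^-}$ on $\supp\sigma^-$ (or simply invoke the positive-definiteness you are about to prove, applied to $\sigma^+ +\sigma^-$ after renormalization) to conclude the mutual integral is finite. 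This is a presentation issue, not a genuine gap.
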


We now study the convergence of one of the terms in \eqref{criteq} for weakly convergent sequences. The analysis is technical, and we split its core argument in a separate lemma.

\begin{lemma}\label{lem:WeakConvDoubleIntegral}
	Let $(\mu_n)$ be a sequence of Borel probability measures that vaguely converges to a measure $\mu_0$. Then, for every $h \in C_c^2$,
	$$
		\lim_{n\to\infty} \iint \frac{h(x) - h(y)}{x-y} \, \dd \mu_n(x) \, \dd \mu_n(y) = \iint \frac{h(x) - h(y)}{x-y} \, \dd \mu_0(x) \, \dd \mu_0(y).
	$$
\end{lemma}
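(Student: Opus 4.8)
The difficulty is that the kernel $F(x,y) = \frac{h(x)-h(y)}{x-y}$ is continuous on $\C\times\C$ (the singularity on the diagonal is removable since $h\in C^2_c$), but it is \emph{not} compactly supported: for $x\in\supp h$ and $y$ far away it behaves like $h(x)/(x-y)$, which decays only like $|y|^{-1}$. So one cannot simply invoke the portmanteau theorem for vague convergence. The plan is to split $F$ into a compactly supported piece plus a tail piece, control the compactly supported piece by vague convergence, and control the tail piece by a uniform decay estimate that survives passage to the limit.

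Concretely, I would fix a large radius $R$ and a cutoff $\eta\in C_c^\infty(\C)$ with $\eta\equiv 1$ on $D_R$, $\supp\eta\subset D_{2R}$, chosen so that $\supp h\subset D_R$. Write
$$
F(x,y) = \underbrace{F(x,y)\,\eta(y)}_{=:F_R(x,y)} + \underbrace{F(x,y)\,(1-\eta(y))}_{=:G_R(x,y)}.
$$
Since $\supp h\subset D_R$ and $\eta\equiv 1$ there, $F_R$ is continuous and compactly supported on $\C\times\C$ (its support lies in $D_R\times D_{2R}$ union the region where $y\in D_{2R}$), so vague convergence of $\mu_n\times\mu_n$ to $\mu_0\times\mu_0$ — which follows from vague convergence of $\mu_n$ to $\mu_0$, using that the $\mu_n$ are probability measures (mass is preserved in the limit, no escape to infinity in the relevant sense, or more carefully: test against $F_R$ which vanishes for $x\notin D_R$) — gives
$$
\iint F_R\,\dd\mu_n\,\dd\mu_n \longrightarrow \iint F_R\,\dd\mu_0\,\dd\mu_0.
$$
For the tail term, note $G_R(x,y)=0$ unless $x\in\supp h\subset D_R$ and $y\notin D_R$, in which case $|x-y|\geq |y|-R$ and hence $|G_R(x,y)|\leq \|h\|_\infty/(|y|-R)$, which tends to $0$ uniformly in $x$ as $R\to\infty$ (uniformly over all $y\notin D_{2R}$, say, with a separate easy bound on the annulus $R\le |y|\le 2R$ where we can instead use that $\eta$ interpolates and $|x-y|$ is bounded below by... — actually simplest: just bound $|G_R(x,y)|\le \|h\|_\infty\,|x-y|^{-1}\,\mathbf 1_{x\in D_R}\mathbf 1_{y\notin D_R}$ everywhere). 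Thus
$$
\left|\iint G_R\,\dd\mu_n\,\dd\mu_n\right| \le \|h\|_\infty \int_{|y|>R}\frac{\dd\mu_n(y)}{|y|-R}\cdot\mu_n(D_R)\le \|h\|_\infty\,\varepsilon(R),
$$
where, since the $\mu_n$ are probability measures, $\varepsilon(R)\to 0$ as $R\to\infty$ — but here is the one genuine subtlety: the rate $\varepsilon(R)$ a priori depends on $n$, because mass of $\mu_n$ could drift to infinity. I expect the honest fix is to exploit that these are \emph{probability} measures with $\mu_0$ also a probability measure: vague convergence of probability measures to a probability measure forces tightness (no mass escapes to $\infty$), so $\sup_n \mu_n(\{|y|>R\})\to 0$ as $R\to\infty$. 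That tightness is the main obstacle to make rigorous; once it is in hand, the estimate becomes uniform in $n$.

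With tightness established, the proof concludes by the standard $3\varepsilon$ argument: given $\delta>0$, choose $R$ so large that the tail contribution for $G_R$ is $<\delta/3$ for all $n$ and also $<\delta/3$ for $\mu_0$; then choose $n$ large so that the $F_R$-integrals differ by $<\delta/3$; combine. I would remark that if one does not want to argue tightness separately, an alternative is to note that $\mu_0$ being a probability measure and the vague limit means $\mu_n(D_R)\to\mu_0(D_R)\to 1$, so $\mu_n(\{|y|>R\})$ is small for $R$ large and $n$ large simultaneously, handling the finitely many remaining $n$ by hand since each individual $\mu_n$ is a probability measure hence tight on its own. Either route works; the tightness observation is the crux, and everything else is the cutoff bookkeeping described above.
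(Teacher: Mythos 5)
Your plan — split the kernel into a compactly supported piece plus a tail, handle the compact piece by vague convergence and the tail by a decay estimate — is the same as the paper's. But there is a genuine gap in how you execute the tail estimate, and the tightness you reach for to fix it is neither available nor needed.

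The crucial inefficiency is the bound $|x-y|\ge |y|-R$ for $x\in D_R$, $|y|>R$. This produces the factor $1/(|y|-R)$, which has a non-integrable singularity as $|y|\to R^+$, and so cannot give a bound that is uniform in $n$ (some $\mu_n$ might put a lot of mass just outside $D_R$). You diagnose this correctly and propose to rescue the argument via tightness: \emph{vague convergence of probability measures to a probability measure forces tightness}. But the lemma only assumes that $\mu_0$ is \emph{a measure}, not a probability measure. Vague limits of probability measures can and do lose mass at infinity, and the lemma must hold in that generality (indeed, in the application to Theorem~\ref{teo:CritSet} one only knows $\mu_{F^t}\to\mu_F$ vaguely). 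So tightness is simply not in your hypotheses, and the sentence ``$\mu_n(D_R)\to\mu_0(D_R)\to 1$'' has no justification.

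The correct fix — which the paper uses, via the auxiliary functions $C_{R,n}$, $C_{R,0}$ — is to observe that the kernel vanishes unless $x$ lies in $\supp h$, which sits inside a \emph{fixed} disk $D_{R_0}$ with $R_0=R_0(h)$ independent of the cutoff radius. Then for $x\in D_{R_0}$ and $|y|>R>2R_0$, one has $|x-y|\ge R-R_0\ge R/2$, so the tail of the double integral is bounded by $2\|h\|_\infty/R$ uniformly in $n$, just from the $\mu_n$ having total mass $1$ and $\mu_0$ having mass at most $1$. No tightness. Your trouble comes precisely from letting a single radius $R$ play both roles (containing $\supp h$ and being the cutoff radius), which degenerates the estimate $|x-y|\ge R - R_0$ to the useless $|x-y|\ge 0$.

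A secondary issue: $F_R(x,y)=F(x,y)\eta(y)$ is not compactly supported. For $x\notin D_R$ one has $h(x)=0$, so $F(x,y)=-h(y)/(x-y)$, which is nonzero whenever $y\in\supp h$ (where $\eta\equiv 1$). You would need a cutoff in both variables. The paper handles the compactly supported part by restricting the integral to $D_R\times D_R$, then using the portmanteau arguments of Propositions~\ref{prop:appmeaszero} and \ref{prop:vagueconvergencerestriction} after choosing $R$ so that $\mu_0(\partial D_R)=0$.
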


\begin{proof}
	The function
	$$
		(x,y) \mapsto \frac{h(x) - h(y)}{x-y}
	$$
	is continuous and bounded.  Fix $R_0$ sufficiently large such that $\supp h \subset D_{R_0 - 1}$. For $R > R_0 + 1$ write
	\begin{multline*}
		\iint \frac{h(x) - h(y)}{x-y} \, \dd \mu_{n}(x) \, \dd \mu_{n}(y)  = \\
\iint_{ \substack{|x| < R \\ |y| < R }} \frac{h(x) - h(y)}{x-y} \, \dd \mu_{n}(x) \, \dd \mu_{n}(y)+ 2 \iint_{ \substack{|x| < R \\ |y| \geq R }} \frac{h(x)}{x-y} \, \dd \mu_{n}(x) \, \dd \mu_{n}(y)  
		\end{multline*}
For $R>0, n\in \N$, introduce the auxiliary quantities
\begin{equation*}
		 	C_{R,n}(x) 
		 	\defeq \int_{|y| \geq R} \frac{1}{x-y} \, \dd \mu_{n}(y),\quad C_{R,0}(x)\defeq \int_{|y| \geq R} \frac{1}{x-y} \, \dd \mu_{0}(y) 
\end{equation*}
which are continuous functions of $x\in  D_{R-1}\supset D_{R_0} $. 
Having in mind that $h\equiv 0$ in $D_R\setminus D_{R_0}$, we write
\begin{multline}\label{breakingh}
\left| \iint \frac{h(x) - h(y)}{x-y} \, \dd \mu_{n}(x) \, \dd   \mu_{n}(y) - \iint \frac{h(x) - h(y)}{x-y} \, \dd \mu_{0}(x) \, \dd   \mu_{0}(y) \right| \\
\begin{aligned}
\leq &  
		 			\left|
		 			\iint_{ \substack{|x| < R \\ |y| < R }} \frac{h(x) - h(y)}{x-y} \, \dd \mu_{n}(x) \, \dd \mu_{n}(y) - \iint_{ \substack{|x| < R \\ |y| < R }} \frac{h(x) - h(y)}{x-y} \, \dd \mu_{0}(x) \, \dd \mu_{0}(y)  
		 			\right|\\
&+ 2 \left| \int_{|x| < R_0} h(x) C_{R,n}(x) \, \dd \mu_{n}(x) - \int_{|x| < R_0} h(x) C_{R,0}(x) \, \dd \mu_{n}(x) \right| \\
&+ 2 \left| \int_{|x| < R_0} h(x) C_{R,0}(x) \, \dd \mu_{n}(x) - \int_{|x| < R_0} h(x) C_{R,0}(x) \, \dd \mu_0(x) \right|,
\end{aligned}
\end{multline}
and to finish the lemma we show that the right-hand side can be made arbitrarily small. 
		 	
We first examine the second term on the right-hand side, starting with the obvious inequality
		 	$$
		 		\left|\int_{|x| < R_0} h(x) C_{R,n}(x) \, \dd \mu_{n}(x) - \int_{|x| < R_0} h(x) C_{R,0}(x) \, \dd \mu_{n}(x) \right| \leq 
		 	\|h\|_\infty \sup_{|x| < R_0} |C_{R,n}(x) - C_{R,0}(x)| .
		 	$$
		 	Note that 
		 	$$
		 		|C_{R,n}(x) - C_{R,0}(x)| 
		 		\leq \int_{|w| \geq R} \frac{1}{|x-w|} \, \dd | \mu_{n} - \mu_0 |(w) 
		 	$$
		 	So, if $R > 2R_0$ and $|x| \leq R_0$, then $|x-w| \geq R - R_0 \geq \frac{R}{2}$ and
		 	$$
		 		|C_{R,n}(x) - C_{R,0}(x)| \leq \frac{4}{R}.
		 	$$
			This way, we conclude that for every $R>2R_0$, the inequality
			\begin{equation}\label{eq:vagueconv1}
				\left| \int_{|x| < R_0} h(x) C_{R,n}(x) \, \dd \mu_{n}(x) - \int_{|x| < R_0} h(x) C_{R,0}(x) \, \dd \mu_{n}(x) \right| 
				\leq \frac{4 \|h\|_\infty}{R},\quad \text{for every }n,
			\end{equation}
			is valid.

Next, for the third term in the right-hand side of \eqref{breakingh}, for any fixed $R>R_0+1$ we have
		 	\begin{equation}\label{eq:vagueconvhn2}
		 		\int_{|x| < R_0} h(x) C_{R,0}(x) \, \dd \mu_{n}(x) 
		 		\stackrel{n\to \infty}{\to} \int_{|x| < R_0} h(x) C_{R,0}(x) \, \dd \mu_0(x) \\
		 	\end{equation}
		 	because $h(x) C_{R,0}(x)$ is continuous on $D_{R-1} \supset D_{R_0}$ and with support on $D_{R_0}\supset \supp h$. 
		 	
Finally, to handle the first term in the right-hand side of \eqref{breakingh} we use some auxiliary measure-theoretic results. Thanks to Proposition~\ref{prop:appmeaszero} we can always choose $R>R_0+1$ for which $\mu(\partial D_R)=0$. In particular, using that $\partial (D_R\times D_R)\subset (\partial D_R\times \C)\cup (\C\times \partial D_R)$, we see that $\mu\times \mu (\partial (D_R\times D_R))=0$. From the vague convergence $\mu_n\times \mu_n\to \mu\times \mu$ and Proposition~\ref{prop:vagueconvergencerestriction}, we conclude
\begin{equation}\label{eq:vagueconvhn1}
\iint_{\substack{ |x|\leq R \\ |y|\leq R }} \frac{h(x)-h(y)}{x-y}\dd\mu_n(x)\dd\mu_n(y) \stackrel{n\to\infty}{\to} 
\iint_{\substack{ |x|\leq R \\ |y|\leq R }} \frac{h(x)-h(y)}{x-y}\dd\mu(x)\dd\mu(y).
\end{equation}

We now combine all the info we have given in a detailed way. Given $\epsilon>0$, we choose $R>2(R_0+1)$ such that
$$
\frac{8\|h\|_\infty}{R}<\frac{\epsilon}{3}.
$$
This way, $R=R(R_0,\epsilon,h)$, but nevertheless as mentioned we can make sure that $\mu(\partial D_R)=0$. When plugged into \eqref{eq:vagueconv1} this inequality ensures that 
$$
\left| \int_{|x| < R_0} h(x) C_{R,n}(x) \, \dd \mu_{n}(x) - \int_{|x| < R_0} h(x) C_{R,0}(x) \, \dd \mu_{n}(x) \right| 
				<\frac{\epsilon}{6},\quad \text{for every }n.
$$
For this fixed $R$, we now choose $n_1=n_1(R)=n_1(\epsilon)$ for which
$$
\left|\iint_{\substack{ |x|\leq R \\ |y|\leq R }} \frac{h(x)-h(y)}{x-y}\dd\mu_n(x)\dd\mu_n(y)-
\iint_{\substack{ |x|\leq R \\ |y|\leq R }} \frac{h(x)-h(y)}{x-y}\dd\mu(x)\dd\mu(y)\right| <\frac{\epsilon}{3},
$$
for every $n>n_1$, which is possible thanks to \eqref{eq:vagueconvhn1}. Next, and finally, we now choose $n_2=n_2(\epsilon)>n_1$ in such a way that
		 	\begin{equation}\label{breakingh3}
		 		\left| \int_{|x| < R_0} h(x) C_{R,0}(x) \, \dd \mu_{n}(x) - \int_{|x| < R_0} h(x) C_{R,0}(x) \, \dd \mu_0(x) \right| < \frac{\epsilon}{6},\quad \text{for} \quad n\geq n_2(R,\epsilon),
		 	\end{equation}
which is now possible due to \eqref{eq:vagueconvhn2}. Combining these last three inequalities, we see that for every $n\geq n_2$, the left-hand side of \eqref{breakingh} is bounded by $\epsilon$, which concludes the proof.

\end{proof}

	Finally, we have necessary results to prove Theorem \ref{teo:CritSet}.

\begin{proof}[{Proof of Theorem \ref{teo:CritSet}}]
For simplicity, for the rest of this proof we denote by $\mu_F=\mu(F,\phi)$ the equilibrium measure of a set $F$ in the (fixed) external field $\phi$, and by $\mu^{-t}$ the pushforward of the measure $\mu$ by the inverse mapping $h_t^{-1}$, $h_t(x)=x+th(x)$.

We start by writing
	$$
		\I^\phi(F^t) - \I^\phi(F) 
			= \I^\phi(\mu_{F^t}) - \I^\phi(\mu_F) 
			= \I^\phi(\mu_{F^t}) - \I^\phi(\mu_F^t) + \I^\phi(\mu_F^t) - \I^\phi(\mu_F). 
	$$
Recalling Definition~\ref{deff:variationenergymeasure},
	$$
		\lim_{t \to 0} \frac{\I^\phi(\mu_{F}^t) - \I^\phi(\mu_F)}{t} \eqdef \D_h \I^\phi(\mu_F).
	$$
Thus, \eqref{eq:critset} follows once we verify that 
	$$
		\I^\phi(\mu_{F^t}) - \I^\phi(\mu_F^t) = o(t),\quad t \to 0.
	$$
For $h \in C_c^2$, the expansions
	\begin{equation*}
		(h^{-1})_t(x) = x - th(x) + o(t), \quad \text{and}\quad 
		\frac{(h^{-1})_t(x) - (h^{-1})_t(y)}{x-y} = 1 - t \frac{h(x) - h(y)}{x-y} + o(t),
	\end{equation*}
are of simple verification, where both $o(t)$ terms are uniform in $x, y \in \C$. In practice, these approximations mean that we may treat the pushforward by $x\mapsto x+th(x)$ as being the same as the pushforward by $x\mapsto x-th(x)$, except for $o(t)$ terms. In other words, $(h^{-1})_t=h_{-t}+o(t)$. With this approximation in mind, we mimic the  proof of Lemma \ref{MuDerivative} (see for instance the key estimates \eqref{logbh} and \eqref{phibh}) and obtain
	$$
		\I^\phi(\mu_{F^t}^{-t}) - \I^\phi(\mu_{F^t}) = -t \D_h\I^\phi(\mu_{F^t}) + o(t), \quad t \to 0.
	$$
	On the other hand, from the very definition of $\D_h^\phi$ (see Definition~\ref{deff:variationenergymeasure}),
	$$
		\I^\phi(\mu_F^t) - \I^\phi(\mu_F) = t \D_h \I^\phi(\mu_F) + o(t), \quad t \to 0.
	$$
	The fact that $\mu_F^t$ is supported in $F^t$ and that $\mu_{F^t}^{-1}$ is supported on $F$ yields the inequalities $\I^\phi(\mu_F^t) - \I^\phi(\mu_{F^t}) \geq 0$ and $\I^\phi(\mu_F)  - \I^\phi(\mu_{F^t}^{-t})\leq 0$. Combined with the equations above, we obtain that as $t \to 0$,
	\begin{equation}\label{EnergyDifference}
		\begin{split}
			0 
				&\leq \I^\phi(\mu_F^t) - \I^\phi(\mu_{F^t}) \\
				&= \I^\phi(\mu_F)  - \I^\phi(\mu_{F^t}^{-t}) + t (\D_h \I^\phi(\mu_F) - \D_h \I^\phi(\mu_{F^t}) ) + o(t) \\
				&\leq t \left( \D_h \I^\phi(\mu_F) - \D_h \I^\phi(\mu_{F^t}) \right) + o(t),
		\end{split}
	\end{equation}
	Thus, to finish the proof, it is sufficient to show that $\D_h \I^\phi(\mu_F) - \D_h \I^\phi(\mu_{F^t}) = o(1)$ as $t \to 0$. 
	
Assume for a moment that $\mu_{F^t} \to \mu_F$ vaguely as $t \to 0$, fact which will be proved later. The function $h \Phi'$ belongs to $C_c^2$, so
		$$
			\lim_{t \to 0} \int h(x) \Phi'(x) \, \dd \mu_{F^t}(x) = \int h(x) \Phi'(x) \, \dd \mu_{F}(x).
		$$
		On the other hand, Lemma \ref{lem:WeakConvDoubleIntegral} guarantees that
	$$
		\lim_{t \to 0} \iint \frac{h(x) - h(y)}{x-y} \, \dd \mu_{F^t}(x) \, \dd \mu_{F^t}(y) 
		= \iint \frac{h(x) - h(y)}{x-y} \, \dd \mu_{F}(x) \, \dd \mu_{F}(y)
	$$
		
		By Proposition \ref{MuDerivative}, above limits imply
		$$
			\lim_{t \to 0} \D_h \I^\phi(\mu_{F^t}) = \D_h \I^\phi(\mu_F),
		$$
		which then concludes the proof.
		
Finally, it remains to verify that $\mu_{F^t}$ vaguely converges to $\mu_F$ as $t\to 0$. For that we will verify that $(\mu_F^t)$ converges to $\mu_F$ weakly, and that $\mu_{F^t} - \mu_F^t$ converges to the null measure vaguely.

		The first convergence is immediate from the definition of push-forward measure and the Dominated Convergence Theorem:
		$$
			\int f(x) \, \dd \mu_F^t(x) = \int (f \circ h_t)(x) \, \dd \mu_F(x) \stackrel{t \to 0}{\to} \int f(x) \, \dd \mu_F(x),
		$$
		for each bounded continuous function $f$.
		
		By Proposition \ref{MuDerivative}, the difference $D_h \I^\phi(\mu_F) - D_h \I^\phi(\mu_{F^t})$ is bounded. Thus, Equation \eqref{EnergyDifference} implies that $\I^\phi(\mu_F^t) - \I^\phi(\mu_{F^t}) \to 0$ as $t \to 0$. Now, by Lemma \ref{lem:MutualEnergy}, as $t \to 0$,
		\begin{equation*}
		\begin{split}
			0 \leq \I(\mu_F^t - \mu_{F^t}) 
			&= 2 \I^\phi(\mu_F^t) + 2 \I^\phi(\mu_{F^t}) - 4 \I^\phi \left(\frac{\mu_F^t + \mu_{F^t}}{2}\right) \\
			&\leq  2 \I^\phi(\mu_F^t) + 2 \I^\phi(\mu_{F^t}) - 4 \I^\phi(\mu_{F^t}) \\
			&= 2 \I^\phi(\mu_F^t) - 2 \I^\phi(\mu_{F^t}) \to 0,
		\end{split}
		\end{equation*}
		where the inequality above is obtaining by noting that $\frac{1}{2} ( \mu_F^t + \mu_{F^t}) \in \Mcal^\phi_1(F^t)$, so its weighted energy is larger than $\I^\phi(\mu_{F^t})$. Lemma \ref{lem:MutualEnergy} finally implies that $\mu_F^t - \mu_{F^t} \to 0$ vaguely as claimed.
\end{proof}

As a corollary of Theorem \ref{teo:CritSet},
	\begin{theorem}\label{teo:SummaryOfCriticalSection}
A set $F$ is critical in the sense of Definition~\ref{def:criticalset} if, and only if, its equilibrium measure $\mu(\phi,F)$ is critical in the sense of Definition~\ref{def:criticalmeasure}.
	\end{theorem}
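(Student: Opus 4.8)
The plan is to read off this equivalence directly from Theorem~\ref{teo:CritSet}, which is exactly the bridge between the two notions of criticality. I would first record that both sides of the asserted biconditional are non-vacuous only when $\I^\phi(F)\in\R$: the equilibrium measure $\mu(\phi,F)$ can be critical in the sense of Definition~\ref{def:criticalmeasure} only if it belongs to $\Mcal_1^\phi(\C)$, hence only if $\I^\phi(F)=\I^\phi(\mu(\phi,F))$ is finite (it cannot be $-\infty$, since the growth condition bounds the weighted energy from below on $\Mcal_1^\phi(F)$); and if instead $\I^\phi(F)=+\infty$ then, since for small $t$ the map $h_t\colon z\mapsto z+th(z)$ is a bi-Lipschitz homeomorphism of $\C$ equal to the identity near $\Zcal$, it induces a bijection $\Mcal_1^\phi(F)\to\Mcal_1^\phi(F^t)$, so $\I^\phi(F^t)=+\infty$ as well and $\D_h\I^\phi(F)$ is an indeterminate $\infty-\infty$ limit, whence $F$ is not critical either. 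Thus both sides are false in the degenerate case, and it suffices to treat $\I^\phi(F)\in\R$.

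Assume then $\I^\phi(F)\in\R$. The equilibrium measure $\mu(\phi,F)$ lies in $\Mcal_1^\phi(F)\subset\Mcal_1^\phi(\C)$, so Definition~\ref{def:criticalmeasure} applies to it. Theorem~\ref{teo:CritSet} supplies, for every $h\in\Hcal$, the identity
$$
\D_h\I^\phi(F)=\D_h\I^\phi\bigl(\mu(\phi,F)\bigr).
$$
Hence $\D_h\I^\phi(F)=0$ holds for all $h\in\Hcal$ if and only if $\D_h\I^\phi(\mu(\phi,F))=0$ holds for all $h\in\Hcal$. By Definition~\ref{def:criticalset} the left-hand condition is precisely that $F$ is a critical set, and by Definition~\ref{def:criticalmeasure} the right-hand condition is precisely that $\mu(\phi,F)$ is a critical measure. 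This proves the equivalence.

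I do not anticipate a real obstacle: the whole analytic burden --- existence of $\D_h\I^\phi(F)$, the vague convergence $\mu_{F^t}\to\mu_F$ as $t\to0$, and the coincidence of the two directional derivatives --- is already carried by Theorem~\ref{teo:CritSet}, whose proof (via Lemmas~\ref{lem:MutualEnergy} and~\ref{lem:WeakConvDoubleIntegral}) is the genuinely technical ingredient. The only point demanding a bit of attention is the bookkeeping of the degenerate case $\I^\phi(F)\notin\R$ sketched above, so that the clean corollary of Theorem~\ref{teo:CritSet} can be stated without side conditions.
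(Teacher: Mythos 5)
Your proposal is correct and follows the paper's own route: the paper states this result as an immediate corollary of Theorem~\ref{teo:CritSet} with no further argument, and your central step is exactly the read-off from the identity $\D_h\I^\phi(F)=\D_h\I^\phi(\mu(\phi,F))$. The paper leaves the degenerate case $\I^\phi(F)\notin\R$ implicit (since the theorem presupposes the equilibrium measure exists, and in the applications $F$ always has finite energy), so your added bookkeeping is extra care rather than a different approach.
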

In particular, if a set $F$ is critical, then its equilibrium measure $\mu(\phi,F)$ possesses all the properties that we obtained in Section~\ref{sec:criticalmeasures}. These properties will be fundamental later.

\section{Solving the max-min problem}\label{sec:MaxMinSolution}

	Having developed the theory of critical measures, we now go back to solving the max-min problem as posed in \eqref{eq:genPC}, for a class of contours $\Tcal$ for an admissible triple $(\Pcal(\Ccal),\Psi,\Pcal(\Theta_\infty))$. Following \cite{Kuijlaars2015}, such solution will be established in three steps as follows. 
	
The first step is to construct a class of sets $\Fcal_M$ which is different but related to our initial class of contours $\Tcal$. The advantage of $\Fcal_M$ will be that it is closed under the Hausdorff metric, but as a consequence it contains rather arbitrary closed sets, not only contours. We then solve a max-min problem on $\Fcal_M$, which will be essentially a consequence of Theorem~\ref{RakhmanovTheorem}, obtaining a max-min set $F_0$.
	
The second part, which is also the longest, consists of reconstructing a contour $\Gamma_0\in \Tcal$ from the set $F_0$, in such a way that their equilibrium measures coincide. This construction is done with a careful dropping of unnecessary parts of it, while preserving some geometry of the set, in a procedure that relies heavily on the results on critical measures that we previously obtained.

Finally, as the third and last step, we verify that $\Gamma_0$ is indeed a solution to the max-min problem as claimed by Theorem~\ref{maintheorem}.

\subsection{Finding a max-min candidate}

	A solution $\Gamma_0$ of the max-min problem should not intersect parts of the complex plane where the external field is very negative, since then $\Gamma_0$ would have very negative energy, and thus it cannot be the maximum over $\Tcal$. We exploit this in the following, presenting an equivalent max-min problem over a subfamily of $\Tcal$ whose closure on the Hausdorff metric satisfies the hypotheses of Rakhmanov's Theorem. 
	
	For $M$ large enough the set $\phi^{-1}(-M)$ is made of three types of curves (see Figure \ref{fig:TypesOfCurves}):
			\begin{enumerate}[(i)]
				\item $N$ disjoint analytic arcs, each of them stretching to infinity in its both ends;
				\item at each  pole $z_j$ of $\Phi$ of order $m_j$ there are $m_j$ ``tear-like'' curves, which are curves starting and ending at $z_j$;
				\item If $ \rho_k > 0$, there is a circle-like curve winding around $w_k$. 
			\end{enumerate}

	To verify above one writes $z-w = R e^{i \theta}$ and study what happens when $z \to w$, where $w$ is a singularity of $\Phi$. For example, when $w = w_k$, then 
	\[
		\phi(z) = \rho_k \log R  + g(z)
	\]
	where $g$ is analytic at $w_k$. Equating identity above to $-M$ and making $R \to 0$ we see that if $ \rho_k > 0$, the solution set is a curve winding around $w_k$, while if $ \rho_k < 0$, there is a neighborhood of $w_k$ in which the solution set is empty.

	\begin{figure}[!htb]
\minipage{0.3\textwidth}
  \includegraphics[width=\linewidth]{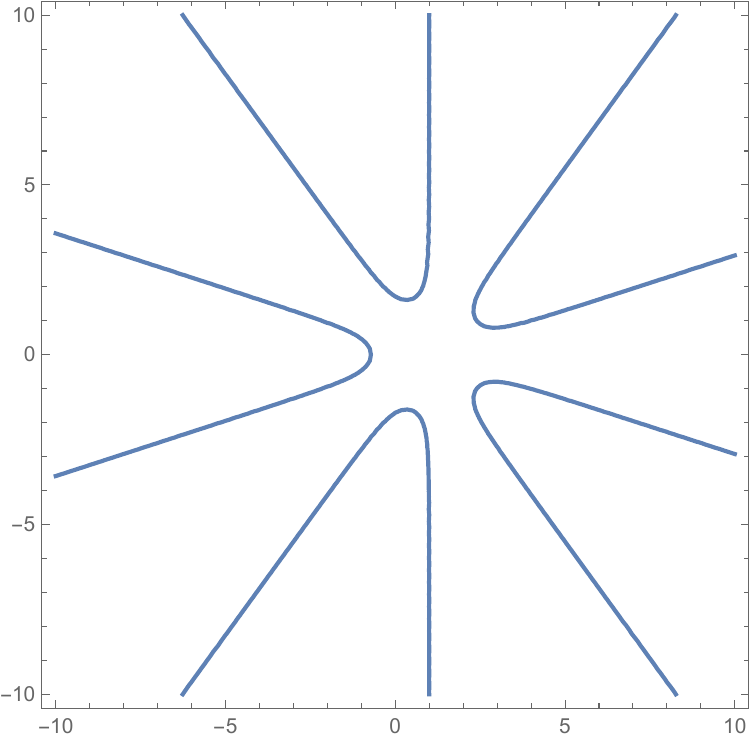}
  \caption*{Type (i) curves for $\Phi(z) = z^5-1$ and $M=15$.}\label{fig:Type1}
\endminipage\hfill
\minipage{0.3\textwidth}
  \includegraphics[width=\linewidth]{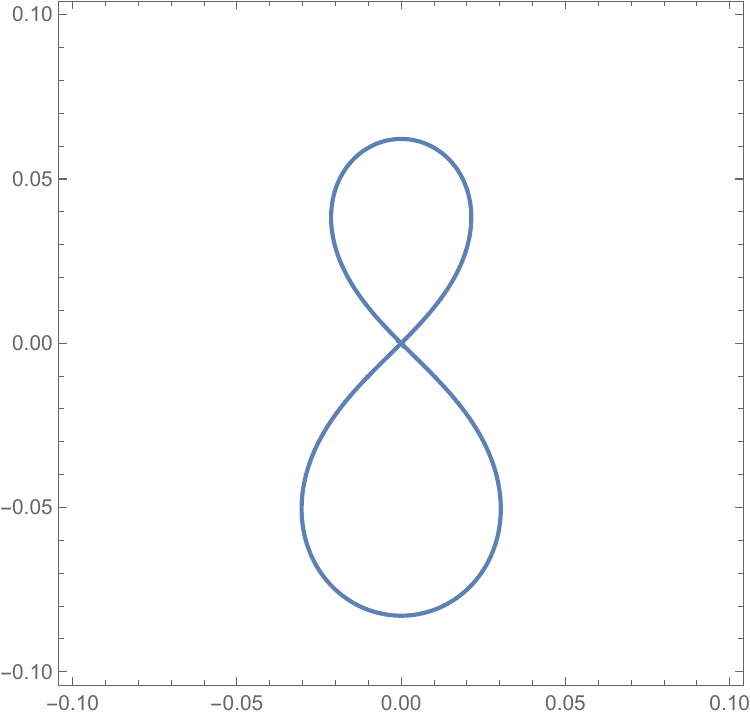}
  \caption*{Type (ii) curve for $\Phi(z) = \frac{(z-\ii)^4}{z^2}$ and $M = 200$.}\label{fig:Type2}
\endminipage\hfill
\minipage{0.3\textwidth}%
  \includegraphics[width=\linewidth]{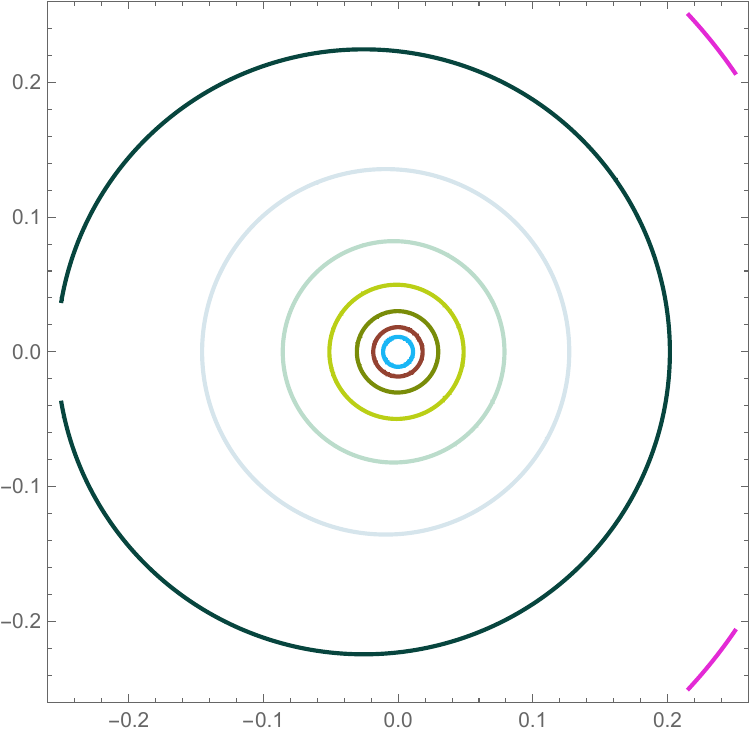}
  \caption*{Type (iii) curves for $\Phi(z) = z- 1 + 2 \log(z)$ and the choices $M=3,4,5,6,7,8$. }\label{fig:Type3}
\endminipage
\caption{Examples of type (i), (ii) and (iii) curves for different choices of $M$ and $\phi$. }\label{fig:TypesOfCurves}
\end{figure}

			Let $S=S_M$ be the union of the type (i) curves. Define
			\[
					r_M \defeq \inf \{ |z|; z \in S\}.
			\]
			For sufficiently large $M$, the set
			\[
				\Delta_{M} \defeq \phi^{-1}(-\infty,-M) \cap \{z \in \C; |z| > r_M \text{ and } \dist(z, S) > 8 \}
			\]
			is a non-empty open set whose boundary consists of a union of $N$ pairwise disjoint analytic arcs. 
			
			 Consider the subclass
			$$
				\Tcal_M \defeq \left\{ \Gamma \in \Tcal; \Gamma \cap \overline{\Delta_M} = \emptyset \right\}
			$$
			and its closure
			$$
				\Fcal_M \defeq \overline{\Tcal_M}
			$$
			with respect to the Hausdorff metric on closed subsets of $\C$ with the chordal distance. 
			
			The next step consists in verifying that the supremum of the weighted logarithmic energy over $\Tcal$ is the same as the supremum over $\Tcal_M$, this allows us to consider the max-min problem defined in $\eqref{eq:genPC}$ over $\Tcal_M$ instead of $\Tcal$.

			\begin{proposition}\label{prop:EquivalentMaxMin}
				For sufficiently large $M$, the properties
				\begin{equation}\label{eq:EquivalentMaxMin}
								\sup_{\Gamma\in \Tcal}\I^\phi(\Gamma)=\sup_{\Gamma\in \Tcal_M} \I^\phi(\Gamma) \leq \sup_{F \in \Fcal_M} \I^\phi(F)<+\infty
				\end{equation}
				hold true.
			\end{proposition}
			
			\begin{proof}
				The first inequality is always true, independent of $M$, simply because $\Tcal_M\subset \Fcal_M$. 
				
%%%%%%%%%%%%%%%%%%%%%%%%%%%%%

We start now verifying the second inequality. Recall that $\Ccal$ is a finite set and $\phi$ is finite at each point in $\Ccal$ (see \eqref{eq:polesfixedpts}), and that $\Zcal$ is the set of singularities of $\Phi$. For the rest of the proof, assume that $M>0$ satisfies  $-2M < \sup_{\Gamma \in \Tcal} \I^\phi(\Gamma)$ and  $-M < \inf \phi(\Ccal)$. At this stage we are allowing this supremum to possibly be equal to $+\infty$. 

Consider a simple closed curve $\gamma$ contained in $\C \setminus \Delta_M$ and intersecting all connected components of $\partial \Delta_M$ exactly once. Then $\C \setminus \gamma$ has two connected components. By making $M>0$ sufficiently large we make sure that the finite set $\Zcal\cup \Ccal$ is contained in the bounded connected component of $\C\setminus \gamma$. 

\begin{figure}
	\centering
	\includegraphics[scale=0.5]{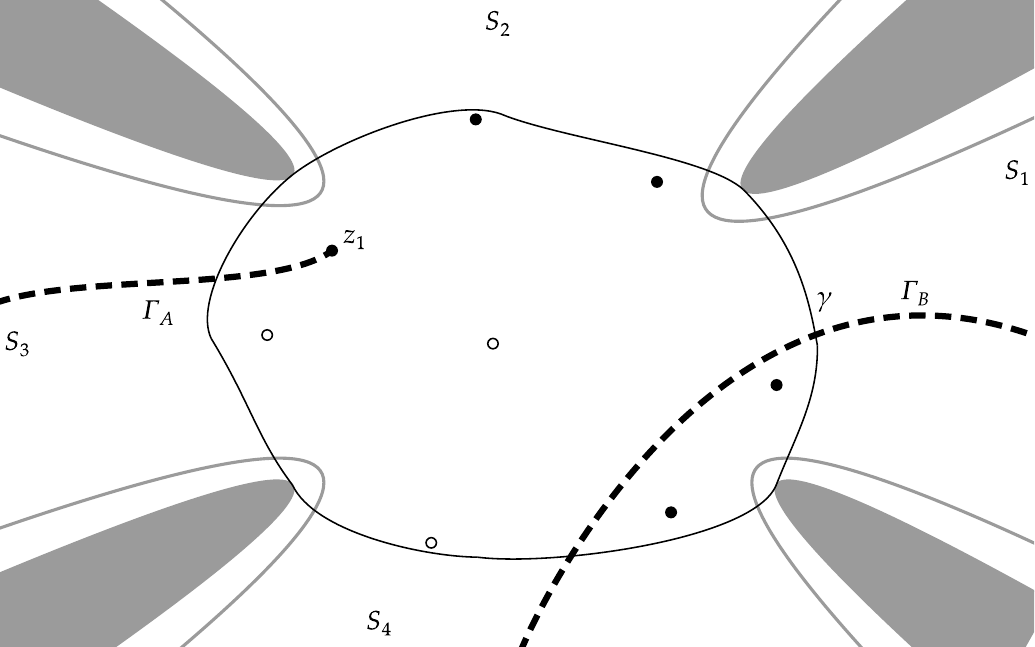}
	\caption{A choice for $\gamma$ when $N = 4$. Sets $\Delta_M$ and $\phi^{-1}(-M)$ are represented by the grey regions and the solid lines, and elements of the sets $\Ccal$ and $\Zcal$ are represented by black and white dots, respectively. If $\Tcal$ connects $z_1$ to infinity then each contour $\Gamma \in \Tcal$ contain a contour similar to $\Gamma_A$. Analogously, if $\Tcal$ connects infinity through $S_1$ and $S_4$, then $\Gamma$ contains a contour similar to $\Gamma_B$.  }
	\label{fig:DeltaM}
\end{figure}

Next, to continue, we split the analysis into two separate cases, namely when each $\Gamma\in \Tcal$ is unbounded, and the remaining case when we do not enforce contours $\Gamma\in \Tcal$ to be unbounded. 
In terms of Definition~\ref{def:AdmissibleClass}, we do not enforce contours $\Gamma \in \Tcal$ to be unbounded if, and only if, $\Pcal_\Psi = \Pcal_0(\Theta) = \emptyset$.
	
Thus, in the former case aforementioned, each $\Gamma \in \Tcal_M$ possesses an unbounded connected component $\wh\Gamma$ that stretches to infinity in a fixed admissible sector, meaning that $\wh\Gamma$ either connects a fixed point to infinity, or connects two distinct admissible sectors at infinity, see Definition~\ref{def:AdmissibleClass}--(v). The contour $\gamma$ is constructed in such a way that $\gamma\cup \partial \Delta_M$ separates $\infty$ from the points of the bounded connected component of $\C \setminus \gamma$, and also separates different admissible sectors from each other. Because contours in $\Tcal_M$ must not intersect $\overline\Delta_M$, it follows that $\wh\Gamma \cap \gamma \neq \emptyset$.  This property is preserved under taking closure with respect to the Hausdorff metric, that is, every $F \in \Fcal_M$ possesses a connected component $\wh F$ for which $\wh F \cap \gamma \neq \emptyset$.

				Now fix $R > 4$ such that $\gamma \subset D_{R-4}$. Then
				$$
					\wh F \cap \partial D_R \neq \emptyset
				$$
				by the same reasoning in which we obtained $\wh F \cap \gamma \neq \emptyset$. Consequently, there is a compact and connected set $F_0 \subset \wh F$ with
				$$
					F_0 \cap \partial D_R \neq \emptyset, \quad F_0 \cap \partial D_{R-4} \neq \emptyset, \quad F_0 \subset \{ R-4 \leq |z| \leq R \}.
				$$
				This implies that $\diam F_0 \geq 4$, thus  $\capp F_0 \geq 1$ which is equivalent to $\I(F_0) \leq 0$. Hence
				\begin{equation}\label{eq:estimate}
					\I^\phi(F) 
					\leq \I^\phi(F_0) 
					\leq \I(F_0) + 2\sup_{z \in F_0} \phi(z) 
					\leq 2\sup_{R - 4 \leq |z| \leq R} \phi(z)  
				\end{equation}
			The last term is finite and independent of $F \in \Fcal_M$, thus the inequality $\sup_{F \in \Fcal_M} \I^\phi(F) < +\infty$ follows.

			Now assume that contours in $\Tcal$ do not necessarily extend to $\infty$, this implies that there exists $\Pcal_0\subset \Pcal(\Ccal)$ with $|\Pcal_0|\geq 2$. We fix distinct points $c,\hat c\in \Pcal_0$, and choose $r > 0$ such that $\overline{D_{8r}(c)}\cap (\Zcal\cup \Ccal)=\{c\}$. 
			
Let $F \in \Fcal_M$. Start by assuming that there exists a uniformly bounded sequence $\Gamma_n \in \Tcal_M$ converging to $F$. Since $\Gamma_n$ has a connected component $\Gamma_0$ with $c,\hat c\in \Gamma_0$ and the sequence is uniformly bounded, then the same is true for $F$, say $F_0$. Since $|c-\hat c|\geq 8r$, we thus conclude that $\wh F \defeq  F_0 \cap \overline{D}_{8r}(c)$ is a compact connected subset of $F$ with $\diam \wh F \geq 8r$. As before, we now compute
			$$
				\I^\phi(F) 
				\leq \I^\phi(\wh F) 
				\leq - \log r + 2 \sup_{|z-c|\leq 8r} \phi(z).
			$$
			The last term is finite and independent of $F$. 
			
			If there is no uniformly bounded sequence $\Gamma_n$ with Hausdorff limit $F$, there exists a sequence $\Gamma_n$ converging to $F$ in the Hausdorff metric such that 
\[
	\Gamma_n \cap \partial D_R \neq \emptyset, \quad \Gamma_n \cap \partial D_{R-4} \neq \emptyset
\]
	and we can use the same arguments used to obtain the bound given by \eqref{eq:estimate}. This finishes the proof of the lemma.

	%%%%%%%%%%%%%%%%%%%%%%%%%%%%%%%%%%%%%%%%%%%%%%%%%%%				

We now prove the equality in \eqref{eq:EquivalentMaxMin}, it suffices to show that
\begin{equation}\label{eq:auxineqenergies}
\sup_{\Gamma\in \Tcal\setminus \Tcal_M}\I^\phi(\Gamma)<\sup_{\Gamma\in \Tcal} \I^\phi(\Gamma).
\end{equation}
Any $\Gamma \in \Tcal \setminus \Tcal_M$ intersects $\overline{\Delta_M}$ and thus, by condition (v) of Definition \ref{def:AdmissibleClass}, one of the following is true:
%					\begin{itemize}
%				\item There is a connected contour $\gamma \subset \Gamma$ that intersects $\overline{\Delta_M}$ and connects a fixed point $c$ to infinity through one of the admissible sectors. Since $\phi(c)>-M$, by the very definition of $\Delta_M$ we see that $\gamma$ contains a compact connected contour $\gamma'$ satisfying
%				\begin{equation}\label{eq:gammaprime}
%				\gamma' \subset \phi^{-1}(-\infty,-M) \quad \text{and}\quad \diam \gamma' 
%				\geq 8.
%				\end{equation}
%				\item There is a connected contour $\gamma \subset \Gamma$ that intersects $\overline{\Delta_M}$ and connects two fixed points $c_1, c_2$. Again, since $\phi(c_1),\phi(c_2) > -M$, the definition of $\Delta_M$ ensures that $\gamma$ contains a finite contour $\gamma'$ satisfying  \eqref{eq:gammaprime}.
%				\item There is a connected contour $\gamma \subset \Gamma$ that intersects $\overline{\Delta_M}$ and connects infinity through two admissible sectors. Once again, there is $\gamma' \subset \gamma$ satisfying (\ref{eq:gammaprime}).
%\end{itemize}

\begin{itemize}
	\item There is a connected contour $\gamma \subset \Gamma$ that intersects $\overline{\Delta_M}$ and connects a fixed point $c$ to infinity through one of the admissible sectors.
	\item There is a connected contour $\gamma \subset \Gamma$ that intersects $\overline{\Delta_M}$ and connects two fixed points $c_i$, $c_j$.
	\item There is a connected contour $\gamma \subset \Gamma$ that intersects $\overline{\Delta_M}$ and connects infinity through two admissible sectors.
\end{itemize}

In each of the cases there exists a point $x \in \gamma$ such that $\phi(x) > -M$. By following a contour starting at $x$ that intersects $\overline{\Delta_M}$, we obtain a subcontour $\gamma'$ such that
\begin{equation}\label{eq:gammaprime}
				\gamma' \cap S \neq \emptyset, \quad 
				\gamma' \subset \phi^{-1}(-\infty,-M) \quad 
				\text{and}\quad \gamma' \cap \overline{\Delta_M} \neq \emptyset.
				\end{equation}
				In particular, $\diam \gamma' > 8$, since $\dist(S, \overline{\Delta_M}) \geq 8$. 
The estimate of capacity in terms of diameter  $\capp(\gamma') \geq \frac{\diam \gamma'}{4} \geq 2$ (see \cite[Theorem 5.3.2]{Ransford}) implies that $\I(\gamma') \leq - \log 2$. Let $\omega$ be the Robin measure of $\gamma'$, that is, the minimizer of $\I$ over the class $\Mcal_1^{\phi\equiv 0}(\gamma')$. Then
\begin{equation}\label{eq:estenergygammaprime}
					\I^\phi(\Gamma) 
					\leq \I^\phi(\gamma') 
					\leq \I^\phi(\omega) 
					= \I(\omega) + 2\int_{\gamma'} \phi \, \dd \omega 
					\leq - \log 2 - 2M  
					<  \sup_{\Gamma' \in \Tcal} \I^\phi(\Gamma') - \log 2.
\end{equation}
				This holds for each $\Gamma \in \Tcal \setminus \Tcal_M$, which shows \eqref{eq:auxineqenergies} and concludes the proof of the claimed equality.

			\end{proof}				
					
\begin{remark}\label{rem:boundenergyint}
The argument in \eqref{eq:estenergygammaprime} shows that if a closed set $F$ is such that $F\cap \overline\Delta_M\neq \emptyset$, then it contains a subset $\gamma'$ satisfying \eqref{eq:gammaprime} and consequently
$$
\I^\phi(F)< \sup_{\Gamma\in \Tcal }\I^\phi(\Gamma).
$$
\end{remark}					
					
As a consequence, we conclude the first step in this section.
			\begin{corollary}\label{MaxCand}
				There exists $F_0 \in \Fcal_M$ such that 
				$$
					\I^\phi(F_0) = \sup_{F \in \Fcal_M} \I^\phi(F).
				$$
			\end{corollary}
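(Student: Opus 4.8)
The plan is to obtain $F_0$ as a maximizer of the upper semicontinuous functional $\I^\phi$ over the compact metric space $\Fcal_M$, combining Rakhmanov's Theorem~\ref{RakhmanovTheorem} with the a priori bound of Proposition~\ref{prop:EquivalentMaxMin}. First I would record that $\Fcal_M$ is compact: it is, by definition, the closure of $\Tcal_M$ in the Hausdorff metric on the closed subsets of $\overline{\C}$, hence a closed subset of that metric space, and the latter is compact because $\overline{\C}$ is compact. It is also nonempty, since for $M$ large one has $\Tcal_M\neq\emptyset$ (an admissible contour can be placed in $\{\phi>-M\}\setminus\overline{\Delta}_M$, stretching to $\infty$ through the admissible sectors, where $\phi\to+\infty$); in particular $\sup_{F\in\Fcal_M}\I^\phi(F)>-\infty$.

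Next I would verify the hypotheses of Theorem~\ref{RakhmanovTheorem} for the family $\Fcal_M$. Harmonicity of $\phi=\re\Phi$ on $\overline{\C}$ away from finitely many points is immediate from \eqref{eq:PhiDef}, the exceptional set being $\Zcal\cup\{\infty\}$. The uniform bound on the number of connected components is the one point that needs a small argument: for $\Gamma\in\Tcal_M\subset\Tcal$ this bound is $k\defeq|\Pcal(\Ccal)|+|\Pcal_0(\Theta)\setminus\Pcal_\Psi|$ by Definition~\ref{def:AdmissibleClass}(v), and I would pass it to the closure as follows. Given $F\in\Fcal_M$ with $\Tcal_M\ni\Gamma_n\to F$, write each $\Gamma_n$ as a union of at most $k$ connected compact sets $\Gamma_n^1,\dots,\Gamma_n^k$ (padding with repetitions or the empty set if necessary); by compactness of the hyperspace of closed subsets of $\overline{\C}$, pass to a subsequence along which $\Gamma_n^i\to G^i$ for each $i$. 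A Hausdorff limit of connected compacta in a compact metric space is connected, so each $G^i$ is connected, and $F=\bigcup_{i=1}^{k}G^i$ because Hausdorff convergence commutes with finite unions; hence $F$ has at most $k$ connected components, as required.

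With these in place, Theorem~\ref{RakhmanovTheorem} gives that $\I^\phi\colon\Fcal_M\to[-\infty,+\infty]$ is upper semicontinuous, and Proposition~\ref{prop:EquivalentMaxMin} gives $\sup_{F\in\Fcal_M}\I^\phi(F)<+\infty$. Finally, an upper semicontinuous function on a nonempty compact space attains its supremum: taking a maximizing sequence $(F_n)\subset\Fcal_M$ and a Hausdorff-convergent subsequence $F_n\to F_0\in\Fcal_M$ (possible by compactness, and $F_0\in\Fcal_M$ since $\Fcal_M$ is closed), upper semicontinuity yields $\I^\phi(F_0)\geq\limsup_n\I^\phi(F_n)=\sup_{F\in\Fcal_M}\I^\phi(F)$, while the reverse inequality holds trivially because $F_0\in\Fcal_M$. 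The only genuinely delicate step is the transfer of the component bound to $\Fcal_M$; the remainder is a direct application of the two cited results.
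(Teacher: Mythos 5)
Your proof is correct and follows the same route as the paper's terse argument: compactness of $\Fcal_M$ in the chordal Hausdorff metric, the uniform bound on connected components, finiteness of the supremum via Proposition~\ref{prop:EquivalentMaxMin}, and Theorem~\ref{RakhmanovTheorem}. The one substantive addition is your justification that the component bound from Definition~\ref{def:AdmissibleClass}(v) survives passage to Hausdorff limits, a point the paper states but does not argue; spelling it out is worthwhile. A minor imprecision in that step: a connected compactum $G^i\subset\overline{\C}$ containing $\infty$ may fall into several pieces in $\C$ once $\infty$ is removed (two disjoint rays joined only at $\infty$, say), so the bound $k=|\Pcal(\Ccal)|+|\Pcal_0(\Theta)\setminus\Pcal_\Psi|$ on components of $F$ taken in $\C$ does not follow literally. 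This is harmless here, since Theorem~\ref{RakhmanovTheorem} only asks for some finite uniform bound, and contours in $\Tcal_M$ avoid $\overline\Delta_M$ and hence reach $\infty$ only through the $N$ admissible channels, so a bound like $k+N$ works (alternatively, count components in $\overline{\C}$, which is the natural ambient space for the chordal Hausdorff metric).
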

			\begin{proof}

$\Fcal_M$ is compact in the Hausdorff metric, each one of its elements has at maximum $|\Pcal(C)| + |\Pcal_0(\Theta) \setminus \Pcal_\Psi|$ connected components and the supremum is finite by Proposition~\ref{prop:EquivalentMaxMin}, so the result follows from Theorem~\ref{RakhmanovTheorem}.
							
			\end{proof}

	Any $F_0 \in \Fcal_M$ who maximizes the energy functional on $\Fcal_M$ will be called a \emph{maximizing candidate}. Note that at the moment a maximizing candidate is not guaranteed to be a solution to our max-min problem of interest: convergence in the Hausdorff metric does not preserve contours, so $F_0$ is not necessarily an element of $\Tcal$.
	
Our next step is to ensure that maximizing candidates do not come to the ``boundary" of $\Fcal_M$, to ensure we are able to apply the variational theory developed in Section~\ref{sec:Critical}. Recall that the perturbation $F^t$ of a set $F$ was introduced in Definition~\ref{def:criticalset}.
			
\begin{lemma}\label{lem:MaximizationVar}
				Let $F_0 \in \Fcal_M$ be a maximizing candidate. For any given $h \in \Hcal$ there exists $t_0 > 0$ such that $F_0^t \in \Fcal_M$, for every $t \in (-t_0,t_0)$. In particular, every maximizing candidate is a critical set, whose equilibrium measure is compactly supported, and composed of finitely many analytic arcs.
\end{lemma}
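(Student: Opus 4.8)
The plan is to show that for a fixed $h \in \Hcal$, the perturbation $F_0^t = h_t(F_0)$ with $h_t(z) = z + th(z)$ stays inside $\Fcal_M$ for all sufficiently small $t$; the criticality statement then follows immediately, since $F_0$ maximizes $\I^\phi$ over $\Fcal_M$, hence $t \mapsto \I^\phi(F_0^t)$ has a maximum at $t=0$, and by Theorem~\ref{teo:CritSet} the derivative $\D_h \I^\phi(F_0)$ exists; a two-sided maximum of a differentiable function of $t$ forces the derivative to vanish, giving $\D_h\I^\phi(F_0)=0$ for every $h\in\Hcal$, i.e. $F_0$ is critical.

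The core task is thus membership $F_0^t \in \Fcal_M$. First I would record the basic structural facts about $h_t$: since $h \in C_c^2(\C)$, for $|t|$ small enough $h_t$ is a $C^1$-diffeomorphism of $\C$ that equals the identity outside a fixed compact set $\supp h$, and moreover $h_t$ fixes every point of $\Ccal$ (because $h$ vanishes on $\Ccal$) and is the identity in a fixed neighborhood of $\Zcal$ (because $h \equiv 0$ there). Next I would check $F_0 \in \Fcal_M$ implies $F_0^t \in \Fcal_M$ by analyzing what $\Fcal_M = \overline{\Tcal_M}$ means. The key observation is that $F_0$, being a maximizing candidate, cannot meet $\overline{\Delta}_M$: by Remark~\ref{rem:boundenergyint}, if $F_0 \cap \overline{\Delta}_M \neq \emptyset$ then $\I^\phi(F_0) < \sup_{\Gamma \in \Tcal}\I^\phi(\Gamma) = \sup_{F\in\Fcal_M}\I^\phi(F)$ (using Proposition~\ref{prop:EquivalentMaxMin}), contradicting maximality. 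So $\dist(F_0, \overline{\Delta}_M) =: d_0 > 0$ (the distance is attained in the chordal metric on closed sets). Since $h_t \to \mathrm{id}$ uniformly on $\C$ in the chordal metric as $t\to 0$, choosing $t_0$ small enough guarantees $d_H(F_0^t, F_0) < d_0$, hence $F_0^t \cap \overline{\Delta}_M = \emptyset$ as well.

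It remains to see that $F_0^t$ is actually in $\Fcal_M$, not merely disjoint from $\overline\Delta_M$; here I would argue that $h_t$ maps $\Fcal_M$ into $\Fcal_M$. Indeed $\Fcal_M$ is the Hausdorff-closure of $\Tcal_M$, and $h_t$ is a homeomorphism of $\overline{\C}$, so it maps $\overline{\Tcal_M}$ onto $\overline{h_t(\Tcal_M)}$; thus it suffices to show $h_t(\Gamma) \in \Tcal_M$ for each $\Gamma \in \Tcal_M$ with $|t|$ small. But $h_t(\Gamma)$ is again a finite union of piecewise smooth Jordan arcs (diffeomorphic image), it still contains $\Ccal$ (fixed pointwise by $h_t$), and since $h_t$ is the identity outside $\supp h$, all the behavior at $\infty$ — stretching into the prescribed sectors $S_j$, the emptiness condition (iv) of Definition~\ref{def:AdmissibleClass} outside a large disc, and the connectedness/connection structure encoded by $(\Pcal(\Ccal),\Psi,\Pcal(\Theta))$ — is preserved verbatim, because $h_t$ acts as a homeomorphism that fixes a neighborhood of $\infty$ and permutes nothing. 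Hence $h_t(\Gamma) \in \Tcal$, and disjointness from $\overline\Delta_M$ (again by the uniform smallness of the perturbation, using that $\dist(\Gamma,\overline\Delta_M)>0$ for $\Gamma\in\Tcal_M$ — here one needs a slightly more careful argument since this distance is not uniform over $\Gamma\in\Tcal_M$, so I would instead directly verify $h_t(F_0)\in\Fcal_M$ via $F_0$ itself rather than via all of $\Tcal_M$) gives $h_t(\Gamma)\in\Tcal_M$.

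The main obstacle I anticipate is the last point: $\Fcal_M$ is a closure and need not be invariant under $h_t$ in an obvious way, and the distance to $\overline\Delta_M$ is not uniform across $\Tcal_M$, so the cleanest route is to work with the fixed set $F_0$ directly — establish $F_0 \in \Fcal_M$ has a positive chordal distance $d_0$ to $\overline\Delta_M$, approximate $F_0$ by $\Gamma_k \in \Tcal_M$ with $d_H(\Gamma_k, F_0) \to 0$, note that for $k$ large $\Gamma_k$ also has distance $\geq d_0/2$ from $\overline\Delta_M$, apply $h_t$ (which for small $t$ moves points by less than $d_0/4$ in the chordal metric) to get $h_t(\Gamma_k) \in \Tcal_M$, and finally pass to the limit $d_H(h_t(\Gamma_k), h_t(F_0)) \to 0$ (continuity of $h_t$ in Hausdorff metric) to conclude $h_t(F_0) = F_0^t \in \overline{\Tcal_M} = \Fcal_M$. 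One must double-check that $h_t$ genuinely preserves the sector-stretching and finiteness-of-components conditions defining $\Tcal_M$, which is where the hypotheses $h\in C^2_c$, $h|_\Ccal = 0$, and $h\equiv 0$ near $\Zcal$ are each used.
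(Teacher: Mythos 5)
Your proposal is correct and follows essentially the same approach as the paper: both establish that $F_0$ (hence $F_0^t$ for small $t$) avoids $\overline{\Delta}_M$ via Remark~\ref{rem:boundenergyint}, approximate $F_0$ by a sequence $\Gamma_n\in\Tcal_M$, push the sequence forward by $h_t$ to get a sequence in $\Tcal_M$ converging to $F_0^t$, and then conclude criticality from $\D_h\I^\phi(F_0)=0$ via Theorem~\ref{teo:CritSet}. Your extra verification that $h_t(\Gamma)\in\Tcal$ (using that $h_t$ fixes $\Ccal$, is the identity near $\Zcal$ and near $\infty$) is a detail the paper leaves implicit, but the argument is the same.
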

			
		\begin{proof}
Thanks to Remark~\ref{rem:boundenergyint}, we know that $F_0 \cap \overline{\Delta}_M=\emptyset $ for any maximizing candidate $F_0\in \Fcal_M$. Since functions in $\Hcal$ are compactly supported, we learn that we can always choose $t_0>0$ sufficiently small such that $F_0^t \cap \overline{\Delta}_M = \emptyset$ for every $t \in (-t_0,t_0)$. On the other hand, by the very definition of $\Fcal_M$ as a closure, there is a sequence $(\Gamma_n) \subset \Tcal_M$ with $\Gamma_n \stackrel{n\to\infty}{\to} F_0$. It is straightforward to see that $\Gamma_n^t \to F_0^t$ as $n \to \infty$ as well, for each $t$ fixed. Since, as observed, $F_0^t\cap \overline{\Delta}_M = \emptyset$, and both of the sets on the left-hand side are closed in $\overline\C$, we learn that for each $t$ fixed there is $n_0=n_0(t)>0$ such that $\Gamma_n^t\cap \overline\Delta_M=\emptyset$ for every $n\geq n_0$. This shows that the sequence $(\Gamma_n^t)_{n\geq n_0}$ is in $\Tcal_M$, and therefore its limit $F_n^t$ belongs to $\Fcal_M=\overline\Tcal_M$.

			For every maximizing candidate $F_0$ and small enough $t$, $F_0^t$ is an element of $\Fcal_M$. Because the energy of $F_0$ is maximal within $\Fcal_M$, we obtain directly from Definition~\ref{def:criticalset} and Theorem~\ref{teo:CritSet} that $	D_h I^\phi(F_0) = 0$. Thus $F_0$ is a critical set, and the rest follows from Corollary \ref{cor:critequi}, concluding the proof.
\end{proof}		

\subsection{Constructing a solution for the max-min problem}

With Corollary~\ref{MaxCand}  we obtained the existence of $F_0 \in \Fcal_M$ that maximizes the energy functional. Remember that $F_0$ is not necessarily in $\Tcal$ as it may not be a finite union of contours. In this section we shall obtain a set $\Gamma_0 \in \Tcal$ such that $\mu_0(\phi,\Gamma_0) = \mu_0(\phi,F_0)$, and consequently $\I^\phi(F_0)=\I^\phi(\Gamma_0)$. Using Proposition~\ref{prop:EquivalentMaxMin}, we then obtain
$$
\I^\phi(\Gamma_0)\leq \sup_{\Gamma\in \Tcal}\I^\phi(\Gamma)\leq \sup_{F\in \Fcal_M}\I^\phi(F)=\I^\phi(F_0)=\I^\phi(\Gamma_0),
$$
so that the inequalities all turn into equalities, and $\Gamma_0$ is a solution to the max-min problem for $\Tcal$.

With $\mu_0=\mu_0(\phi,F_0)$, consider the set
$$
	\Lambda \defeq \left\{ z \in \C; U^{\mu_0}(z) +  \phi(z) > \ell_0 \right\}
$$
where $\ell_0$ is the Euler-Lagrange variational constant for $\mu_0$ (see \eqref{eq:EulerLagrange}). The new set $\Gamma_0$ will be constructed by connecting $\supp \mu_0$ to the fixed points and chosen admissible sectors through contours contained in $\overline{\Lambda}$.

Since $\mu_0$ is the equilibrium measure of $F_0$ in the external field $\phi$, the variational conditions \eqref{eq:EulerLagrange} imply that
		$$
			F_0 \subset \left\{ z \in \C; U^{\mu_0}(z) +  \phi(z) \geq \ell_0 \right\}.
		$$
		If $z \in F_0 \setminus \supp \mu_0$ then $U^{\mu_0} +\phi$ is harmonic near $z$, so it cannot have a local maximum in $z$. This implies that
		$$
			F_0 \subset \overline{\Lambda} \cup \supp \mu_0.
		$$

	The geometry of $\Lambda$ around infinity was described in \cite{Kuijlaars2015} in the context of a polynomial external field. The proof is local, in the sense that it only depends on the polynomial behavior of $\Phi$ near $z=\infty$. Thus, the same result is valid here, with the same proof, and we quote.

\begin{lemma}[{\cite[~Lemma 5.5]{Kuijlaars2015}}]\label{LambdaBehaviour}
  Given $\epsilon > 0$, define
\[
	S_j^\epsilon \defeq \left\{ z \in \C; |z| \geq R, |\arg z - \theta_j| \leq \frac{\pi}{2N} + \epsilon \right\}, \quad j=1,\ldots,N,
\]
where the angles $\theta_j$ were defined in \eqref{eq:deffsecSj}.
For each $\epsilon > 0$, there exists $R_\epsilon > 0$ such that for every $R > R_\epsilon$ the set $\Lambda \setminus D_R$ has exactly $N$ connected components $\lambda_1, \ldots, \lambda_N$, where
	$$
		L_{j,R} \defeq \left\{z = r\ee^{\ii \theta_j}; r \geq R \right\} \subset \lambda_j \subset S_j^\epsilon, \quad 1 \leq j \leq N.
	$$
	
\end{lemma}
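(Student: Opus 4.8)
The plan is to reduce the statement to the asymptotics of $g(z)\defeq\U^{\mu_0}(z)+\phi(z)-\ell_0$ near $z=\infty$, the point being that the geometry of $\Lambda=\{g>0\}$ out there is governed entirely by the leading term $\Real(\alpha z^N)$. By Corollary~\ref{cor:muIscompact} the measure $\mu_0$ is compactly supported with total mass $1$, hence $\U^{\mu_0}(z)=-\log|z|+\Ord(1/|z|)$; combined with the expansion of $\phi$ recorded just after \eqref{eq:PhiDef} this gives, for $z=r\ee^{\ii\theta}$,
\begin{equation*}
g(r\ee^{\ii\theta})=|\alpha|\,r^{N}\cos\bigl(N\theta+\arg\alpha\bigr)+\Ord\bigl(r^{N-1}\bigr)+\Ord(\log r),\qquad r\to\infty,
\end{equation*}
uniformly in $\theta$. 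The subdominant terms, including the bounded contributions of the branch cuts of the logarithmic part $L$, are $o(r^{N})$ and will not affect any sign consideration below; moreover, for $R$ large $\Lambda\setminus D_R$ stays away from the finite singularity set $\Zcal$ and from $\supp\mu_0$, so $g$ is harmonic there. We may also assume $0<\epsilon<\pi/(2N)$, so that the closed sectors $S_j^\epsilon$ are pairwise disjoint; the statement for larger $\epsilon$ follows a fortiori.

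Two consequences fix the outer shape. On $L_{j,R}$, where $\theta=\theta_j$, one has $\cos(N\theta_j+\arg\alpha)=1$, so $g\to+\infty$ along it and $L_{j,R}\subset\Lambda$ for $R$ large. Conversely, outside $\bigcup_j S_j^\epsilon$ we have $\cos(N\theta+\arg\alpha)\le-\sin(N\epsilon)<0$, hence $g\to-\infty$ there; thus there is $R_\epsilon$ with $\Lambda\setminus D_R\subset\bigcup_j S_j^\epsilon$ for every $R>R_\epsilon$. It remains to see that inside each sector the set we capture is connected: I would fix $r>R$ and show that $\{\theta:\,g(r\ee^{\ii\theta})>0\}$, restricted to the arc $\{|z|=r\}\cap S_j^\epsilon$, is a single subarc containing $\theta_j$. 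The endpoint values are already under control ($g<0$ there, $g>0$ at $\theta_j$), so one only needs to rule out oscillation, and for that I would differentiate in $\theta$: since the rational part of $\Phi$ is analytic near $\infty$ its expansion may be differentiated term by term, giving $\partial_\theta g=-N|\alpha|r^{N}\sin(N\theta+\arg\alpha)+\Ord(r^{N-1})$, while $\partial_\theta\U^{\mu_0}=\Ord(1)$. At any zero of $g$ the leading term of $g$ must itself be $o(r^N)$, so $\cos(N\theta+\arg\alpha)=o(1)$, hence $|\sin(N\theta+\arg\alpha)|$ is bounded away from $0$ and, on the half-arc $\theta\in[\theta_j,\theta_j+\pi/(2N)+\epsilon]$, has a fixed sign; therefore for $r$ large $\partial_\theta g<0$ at every zero of $g$ on this half-arc. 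A short argument then shows $g$ has exactly one zero on each of the two half-arcs, so $\{g>0\}$ on the arc is the announced subarc.

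Finally I would assemble the components. Let $\lambda_j$ be the union over $r>R$ of the subarcs just found; it contains the connected ray $L_{j,R}$, each of its points lies on a subarc meeting $L_{j,R}$, so $\lambda_j$ is connected, and by construction $L_{j,R}\subset\lambda_j\subset S_j^\epsilon$. Since $\Lambda\setminus D_R\subset\bigcup_j S_j^\epsilon$ and on every circle of radius $r>R$ the positivity set of $g$ inside $S_j^\epsilon$ is exactly this $j$-th subarc, we conclude $\Lambda\setminus D_R=\lambda_1\cup\cdots\cup\lambda_N$, a disjoint union. The one genuinely delicate point in this scheme is the angular non-oscillation step — one must check that differentiating the asymptotic expansion of $\phi$ is legitimate and that $\partial_\theta\U^{\mu_0}$ is of strictly smaller order than $r^N$ near the sign changes; everything else is bookkeeping. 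As the excerpt notes, this is precisely the argument of \cite[Lemma~5.5]{Kuijlaars2015}, which uses only the behaviour of $\Phi$ at $\infty$ and so carries over verbatim.
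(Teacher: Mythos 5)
The paper does not actually prove this lemma — it is quoted verbatim from \cite[Lemma 5.5]{Kuijlaars2015}, with the remark that the argument is purely local at $z=\infty$, where $\Phi$ behaves polynomially, and therefore carries over unchanged. Your proof is a faithful reconstruction of exactly that argument (leading polynomial term of $\phi$ dictates sign of $g=\U^{\mu_0}+\phi-\ell_0$ sector by sector, $\U^{\mu_0}=-\log|z|+\Ord(1/|z|)$ by compactness of $\supp\mu_0$ from Corollary~\ref{cor:muIscompact}, angular monotonicity kills oscillation inside each sector), and you correctly observe at the end that this is precisely the proof of the cited lemma; so the two are the same approach, with yours supplying the details the paper elects to omit.
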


	Now, we fix $\epsilon>0$, also ensuring that the sets $S_j^\epsilon$ are pairwise disjoint.
In addition, fix $R > R_\epsilon$ sufficiently large so that $\Zcal\cup\Ccal\cup \supp\mu_0\subset D_R$. 
	
	For $1 \leq j \leq N$, let $\Lambda(j)$ be the connected component of $\overline{\Lambda} \cup \supp \mu_0$ for which
	$$
		L_{j,R} \subset \Lambda(j).
	$$
	The set $\Lambda$ is open and has finitely many components whose boundary consists of a finite number of smooth arcs, whereas $\supp\mu_0$ consists of finitely many analytic arcs (see Lemma \ref{lem:MaximizationVar}). These properties ensure that $\Lambda(j)$ is pathwise connected, and we can connect any point of $\Lambda(j)$ to infinity through a curve entirely contained in $\Lambda(j)$ and stretching out to infinity in the sector $S_j$.
	
	The following lemma characterizes $\Lambda(j)$ in terms of the fixed connections. With a slight abuse of notation, for a fixed point $c\in \overline{\Lambda} \cup \supp \mu_0$ we denote by $\Lambda(c)$ the connected component of $\overline{\Lambda} \cup \supp \mu_0$ that contains $c$.
	
	\begin{lemma}\label{lem:Lambdaj}
	The following properties hold.
	\begin{enumerate}[(1)]
	\item If $\Tcal$ connects infinity through sectors $S_j$ and $S_k$ then $\Lambda(j) = \Lambda(k)$. 
	\item If $\Tcal$ connects a fixed point $c$ to infinity through $S_j$, then $\Lambda(c) = \Lambda(j)$. 
	\item If $\Tcal$ connects fixed points $c_1$ and $c_2$, then $\Lambda(c_1) = \Lambda(c_2)$.
	\end{enumerate}
	\end{lemma}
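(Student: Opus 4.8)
The plan is to transfer the connectivity prescribed by Definition~\ref{def:AdmissibleClass} from the approximating contours to the limit set $F_0$. Since $F_0\in\Fcal_M=\overline{\Tcal_M}$, there is a sequence $\Gamma_n\in\Tcal_M$ with $\Gamma_n\to F_0$ in the Hausdorff metric on $\overline\C$, and each $\Gamma_n$ carries the required connected components. Two elementary facts drive everything: a Hausdorff limit of connected closed subsets of the compact space $\overline\C$ is connected; and if $\sigma_n\subseteq\Gamma_n$ and $\sigma_n\to G$ (Hausdorff), then $G\subseteq F_0$, while any point lying in all the $\sigma_n$ lies in $G$. We fix the radius $R$ as in the text (so $\Zcal\cup\Ccal\cup\supp\mu_0\subset D_R$ and Lemma~\ref{LambdaBehaviour} applies), enlarging it if necessary to also arrange the confinement statement below. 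Since $\supp\mu_0\subset D_R$, Lemma~\ref{LambdaBehaviour} gives $(\overline\Lambda\cup\supp\mu_0)\setminus D_R\subseteq\bigsqcup_{j=1}^N\overline{\lambda_j}$ with $\overline{\lambda_j}\subset S_j^\epsilon$ pairwise disjoint, $L_{j,R}\subset\lambda_j$, and $\overline{\lambda_j}\subseteq\Lambda(j)$; recall also that $F_0\subseteq\overline\Lambda\cup\supp\mu_0$.

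The key geometric input is a \emph{confinement at infinity}: after enlarging $R$, every $\Gamma\in\Tcal_M$ satisfies $\Gamma\setminus D_R\subseteq\bigsqcup_j S_j^\epsilon$. This holds because $\Gamma\cap\overline\Delta_M=\emptyset$ forces $\Gamma\subseteq\{\phi\ge-M\}\cup\{\dist(\cdot,S)\le 8\}$, and outside a large disk both of these sets lie in $\bigsqcup_j S_j^\epsilon$: the first by the asymptotics $\phi(z)=|z|^N|\alpha|\cos(N\arg z+\arg\alpha)(1+\Ord(|z|^{-1}))$, and the second because the type~(i) curves $S$ are asymptotic to the boundary rays $\arg z=\theta_j\pm\pi/(2N)$ of the cones $S_j$. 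Using this, I claim that whenever $\Gamma\in\Tcal_M$ has a connected component $\Gamma'$ stretching to infinity in $S_j$, one can find a connected set $\sigma\subseteq\Gamma'$ which is unbounded, whose closure in $\overline\C$ meets $\partial D_R$, and with $\sigma\setminus D_R\subseteq S_j^\epsilon$: take the component of $\Gamma'\setminus D_R$ that carries the divergence towards $\theta_j$; confinement puts it in a single $S_l^\epsilon$, hence in $S_j^\epsilon$, and its closure meets $\partial D_R$ because $\Gamma'$ meets $D_R$ — which holds since $\Ccal\subset D_R$ (in parts (2) and (3)), and since a connected subset stretching to infinity in two disjoint cones cannot avoid $D_R$ (in part (1)). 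Applying this to $\Gamma_n$ and passing to a subsequence, $\overline{\sigma_n}$ converges to a connected set $G_j\subseteq F_0\subseteq\overline\Lambda\cup\supp\mu_0$ that contains a subsequential limit $v$ of points of $\sigma_n$ on $\partial D_{R+1}$; since $|v|=R+1>R$ and $v\in S_j^\epsilon$, such $v$ must lie in $\overline{\lambda_j}\subseteq\Lambda(j)$, so $G_j\subseteq\Lambda(j)$.

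With this in hand the three assertions are immediate. For~(1), if $\Tcal$ connects infinity through $S_j$ and $S_k$, then Definition~\ref{def:AdmissibleClass}(ii)--(iii) gives for each $n$ a connected component $\Gamma^{(n)}\subseteq\Gamma_n$ stretching to infinity in both $S_j$ and $S_k$; along a common subsequence $\Gamma^{(n)}\to G$ with $G$ connected, $G\subseteq F_0$, and $G\supseteq G_j\cup G_k$, so $G$ is a connected subset of $\overline\Lambda\cup\supp\mu_0$ meeting both $\Lambda(j)$ and $\Lambda(k)$, forcing $\Lambda(j)=\Lambda(k)$. For~(2), Definition~\ref{def:AdmissibleClass}(ii) gives a connected component $\Gamma^{(n)}\ni c$ stretching to infinity in $S_j$, whose subsequential limit $G$ is connected, lies in $F_0\subseteq\overline\Lambda\cup\supp\mu_0$, contains $c$, and contains $G_j\subseteq\Lambda(j)$; hence $\Lambda(c)=\Lambda(j)$. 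For~(3), Definition~\ref{def:AdmissibleClass}(ii) gives a connected component $\Gamma^{(n)}\ni c_1,c_2$, whose subsequential limit $G\subseteq F_0\subseteq\overline\Lambda\cup\supp\mu_0$ is connected and contains $c_1$ and $c_2$, so $\Lambda(c_1)=\Lambda(c_2)$. I expect the confinement-at-infinity step — matching the $\Delta_M$-construction against the cones $S_j^\epsilon$ of Lemma~\ref{LambdaBehaviour} — to be the only point requiring genuine care; the rest is bookkeeping with Hausdorff limits of connected sets.
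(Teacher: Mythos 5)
Your setup is in the right spirit, and the confinement-at-infinity observation ($\Gamma\setminus D_R\subset\bigsqcup_j S_j^\epsilon$ for $\Gamma\in\Tcal_M$ once $R$ is large) is a clean way to package the geometry; the derivation of $G_j\subseteq\Lambda(j)$ via a limit point $v$ on $\partial D_{R+1}$ also works. But the final step in each of (1)--(3) rests on the unjustified assertion that the subsequential Hausdorff limit $G$ of $\Gamma^{(n)}$ (restricted to $\C$) is \emph{connected}, and this can fail. Hausdorff limits of connected compacta in $\overline{\C}$ are connected in $\overline{\C}$, but removing $\infty$ can disconnect the limit. A model example: the ``staples'' $\Gamma^{(n)}=\{-1+\ii y:0\le y\le n\}\cup\{x+\ii n:|x|\le 1\}\cup\{1+\ii y:0\le y\le n\}$ are connected in $\C$, yet their Hausdorff limit in $\overline\C$ restricted to $\C$ is two disjoint vertical rays. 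In our setting nothing prevents $\Gamma^{(n)}$ from making deeper and deeper excursions into some $S_l^\epsilon$ before reaching $S_j$, so that the limit $G\cap\C$ breaks into several pieces joined only at $\infty$ --- and then ``$G$ connected, containing $c$ and $G_j$'' is not available.

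What rescues the argument is precisely the chaining that the paper performs: record the successive crossings $\omega_{0,n}=c,\alpha_{1,n},\omega_{1,n},\dots,\alpha_{l,n}$ of a fixed contour $\Gamma_n'\subset\Gamma_n$ with the circular arcs $\eta_i\subset\partial D_R$, observe that each arc piece $\Gamma_n'[\omega_{k,n},\alpha_{k+1,n}]$ lies in the \emph{compact} set $\overline{D_R}\setminus\Delta_M$, take Hausdorff limits $F'_k\subset F_0$ of these compact pieces (which \emph{are} connected in $\C$), note from \eqref{eq:etai} that the limit endpoints $\omega_k,\alpha_{k+1}\in\eta_{i_k},\eta_{i_{k+1}}$ lie in $\Lambda(i_k),\Lambda(i_{k+1})$ respectively, and chain $\Lambda(c)=\Lambda(i_1)=\cdots=\Lambda(i_l)=\Lambda(j)$. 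Your confinement lemma is compatible with this, and your construction of the $G_j$ essentially handles the ``last link'' of the chain; what is missing is the decomposition into compact links between successive $\eta$-crossings so that every intermediate piece stays connected in $\C$. Without that, the proof has a genuine gap.
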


We postpone the proof of Lemma~\ref{lem:Lambdaj} to the following subsection, and now use it to construct $\Gamma_0$.

	\begin{proof}[Construction of $\Gamma_0$]
		Let $\epsilon$ and $R > R_\epsilon$ as previously described, and denote
		$$
			x_j = R \ee^{\ii \theta_j}, \quad 1 \leq j \leq N.
		$$
		From Lemma \ref{LambdaBehaviour} we see that $x_j \in \Lambda(j)$. 
		
		Let $C \in \Pcal(\Ccal)$ with $\Psi(C) = \emptyset$. Then Lemma \ref{lem:Lambdaj} implies that all points of $C$ are contained in the same connected component of $\overline{\Lambda} \cup \supp \mu_0$. So there is a connected compact set $\gamma_C \subset \overline{\Lambda} \cup \supp \mu_0$, which is a finite union of bounded piecewise smooth arcs, such that $C \subset \gamma_C$.

		For each $A \in \Pcal(\Ccal)$ with $\Psi(A) \neq \emptyset$, Lemma \ref{lem:Lambdaj} implies that all points of $A$ and $x_j$, with $\theta_j \in \Psi(A)$, are in the same connected component of $\overline{\Lambda} \cup \supp \mu_0$. There is a connected compact set $\gamma_A \subset \overline{\Lambda} \cup \supp \mu_0$ which is a finite union of bounded piecewise smooth arcs, such that $A \subset \gamma_A$ and $x_j \in \gamma_A$ for each $\theta_j \in \Psi(A)$. For $L_{j,R}$ as in Lemma \ref{LambdaBehaviour}, define
		$$
			\Gamma_A = \left[ \bigcup_{\theta_j \in \Psi(A)} L_{j,R} \right] \cup \gamma_A.
		$$
		Each $\Gamma_A$ is a finite union of piecewise smooth arcs, $A \subset \Gamma_A$, $\Gamma_A$ is connected and stretches out to infinity on each sector $S_j$ with $\theta_j \in \Psi(A)$. 
		
		Given $B \in \Pcal_0(\Theta) \setminus \Pcal_\Psi$, Lemma \ref{lem:Lambdaj} implies that the points $x_j$ with $j \in B$ belong to the same connected component of $\overline{\Lambda} \cup \supp \mu_0$. Then there exists a compact connected set $\gamma_B \subset \overline{\Lambda} \cup \supp \mu_0$, which is a finite union of bounded piecewise smooth arcs, satisfying $x_j \in \gamma_B$ for each $\theta_j \in B$.
		Define
		$$
			\Gamma_B = \left[ \bigcup_{\theta_j \in B} L_{j,R} \right] \cup \gamma_B.
		$$
		Each $\Gamma_B$ is a finite union of piecewise smooth arcs. $\Gamma_B$ is connected and stretches out to infinity on each sector $S_j$ with $\theta_j \in B$. 
		
		The union
		$$
			\Gamma_0 = 
				\left( \bigcup_{\substack{C \in \Pcal(\Ccal) \\ \Psi(C) = \emptyset}} \gamma_C  \right) 
				\bigcup \left( \bigcup_{\substack{A \in \Pcal(\Ccal) \\ \Psi(A) \neq \emptyset}} \Gamma_A
				 \right) 
				\bigcup \left( \bigcup_{B \in \Pcal_0(\Theta) \setminus \Pcal_\Psi} \Gamma_B \right)
		$$
		satisfies all requirements of Definition \ref{def:AdmissibleClass}, thus $\Gamma_0 \in \Tcal$. In particular,
		$$
			\I^\phi(\Gamma_0) \leq \sup_{\Gamma \in \Tcal} \I^\phi(\Gamma).
		$$
		Since $\Gamma_0 \subset \Gamma_0 \cup \supp \mu_0$, 
		\begin{equation}
			\I^\phi(\Gamma_0 \cup \supp \mu_0) \leq \I^\phi(\Gamma_0).
		\end{equation}
		On the other hand, since $\Gamma_0 \subset \overline{\Lambda} \cup \supp \mu_0$, the variational conditions (\ref{eq:EulerLagrange}) are valid for $\Gamma_0 \cup \supp \mu_0$, which means that $\mu_0$ is the equilibrium measure of $\Gamma_0 \cup \supp \mu_0$ in the external field $\phi$. So
		\begin{equation}
			\I^\phi(\Gamma_0 \cup \supp \mu_0) = \I^\phi(\mu_0) = \I^\phi(F_0)
		\end{equation}
		since $\mu_0$ is the equilibrium measure of $F_0$ as well. Recall that $F_0$ is a maximizer of the energy functional $\I^\phi$ over the class $\Fcal_M$ which contains $\Tcal_M$. Then, by Proposition \ref{prop:EquivalentMaxMin},
		$$
			\I^\phi(F_0) \geq \sup_{\Gamma \in \Tcal_M} \I^\phi(\Gamma) = \sup_{\Gamma \in \Tcal} \I^\phi(\Gamma).
		$$
		Combining inequalities above, we finally get
		$$
			\I^\phi(\Gamma_0) \leq \sup_{\Gamma \in \Tcal} \I^\phi(\Gamma) \leq \I^\phi(F_0) = \I^\phi(\Gamma_0 \cup \supp \mu_0 ) \leq \I^\phi(\Gamma_0)
		$$
		and so equality holds throughout these inequalities.
		
		Let $\mu_1$ be the equilibrium measure of $\Gamma_0$ in the external field $\phi$. Then
		$$
			\I^\phi(\mu_1) = \I^\phi(\Gamma_0) = \I^\phi(\Gamma_0 \cup \supp \mu_0).
		$$
		Then $\mu_1$ is a measure supported on $\Gamma_0 \subset \Gamma_0 \cup \supp \mu_0$ whose weighted energy is the minimum energy for measures on $\Gamma_0 \cup \supp \mu_0$. By uniqueness of the equilibrium measure it follows that $\supp \mu_0 \subset \Gamma_0$ and $\mu_0 = \mu_1$ is the equilibrium measure in the external field $\phi$ of $\Gamma_0$. 
		\subsection{Proof of Lemma \ref{lem:Lambdaj}}
				
		 Let $\Gamma_n$ be a sequence in $\Tcal_M$ that converges to $F_0$ in the Hausdorff metric.

		Let $R$ be large enough such that the circle $|z| = R$ intersects each of the components of $\Delta_M$ along $N$ circular arcs. Then there are $N$ circular arcs of $|z|=R$ outside of $\Delta_M$, that we denote by $\eta_1,\ldots,\eta_N$, where $\eta_k$ is in the direction of $S_k$. If $R$ is large enough, Lemma \ref{LambdaBehaviour} ensures that
		\begin{equation}\label{eq:etai}
			\left\{ z \in \eta_i; U^{\mu_0}(z) + \phi(z) \geq l_0 \right\} \subset \Lambda(i)
		\end{equation}
		for each $i$. We fix $R$ satisfying all conditions above and such that each fixed point $c$ is in $D_R$.
		
		We start by proving the second item. Let $c$ be a fixed point that $\Tcal$ connects to infinity through $S_j$. For each $n \in \N$, $\Gamma_n$ has a connected component that connects $c$ to infinity through $S_j$. On this connected component we can find a subset $\Gamma_n'$ which is a simple piece-wise $C^1$ contour connecting $c$ to $\infty$ through $S_j$. We consider $\Gamma_n'$ as an oriented contour, going from $c$ to $\infty$, and say that $\alpha$ lies before a point $\omega \in\Gamma_n'$ if $\alpha$ appears first when following the contour. By definition of $\Tcal_M$,
		$$
			\Gamma_n' \cap \Delta_M = \emptyset.
		$$				
		
		Now put $\eta = \cup_{i=1}^N \eta_i$. We follow the contour $\Gamma_n'$ starting at $c$. Put $\omega_{0,n} = c$. Let $\alpha_{1,n}$ be the first point of $\Gamma_n'$ after $\omega_{0,n}$ that intersects with $\eta$, then $\alpha_{1,n} \in \eta_{i_1}$ for some index $i_1$. If $i_1 = j$ we stop our process. Otherwise, since $\Gamma_n'$ goes to $\infty$ through $S_j$, there exists $\omega_{1,n}$ that is the last point of intersection of $\Gamma_n'$ with $\alpha_{i_1}$. After that, $\Gamma_n'$ enters $D_R$ and, since it must go to $\infty$ through $S_j$, it must intersect $\eta$ again. Let $\alpha_{2,n}$ be the first point after $\omega_{1,n}$ of this intersection, then $\alpha_{2,n} \in \alpha_{i_2}$ for some index $i_2 \neq i_1$. If $i_2 = j$ we are finished. Otherwise, there exists $\omega_{2,n}$ that is the last point of intersection of $\Gamma_n'$ with $\alpha_{i_2}$.

		The process is clear, and can be followed until there is some $l \in \{1,\ldots,N-1\}$ where $i_l = j$. This produces indexes $i_1,\ldots,i_l$, and points $c=\omega_{0,n}, \alpha_{1,n}, \omega_{1,n} \ldots, \omega_{l-1,n}, \alpha_{l,n}$ on $\Gamma_n'$ such that
		\begin{equation}
			i_l = j, \quad \alpha_{k,n}, \omega_{k,n} \in \eta_{i_k}, \quad 1 \leq k \leq l.	
		\end{equation}
		
		Denote by $\Gamma_n'(\omega, \alpha)$ the part of $\Gamma_n'$ strictly between $\omega$ and $\alpha$, then
		$$
			\Gamma_n'(\omega_{k,n}, \alpha_{k+1,n}) \subset D_R \setminus \Delta_M, \quad k=0, \ldots, l-1.
		$$	
		
		The numbers $i_1,\ldots,i_{l-1}$ depends on the contour $\Gamma_n$, so they also depends on $n$. Since these numbers are integers between $1$ and $N$, we can pass to a subsequence and assume that they are independent of $n$.
		
		The set
		$$
			\Gamma_n'[\omega_{k,n}, \alpha_{k+1,n}] = \Gamma_n'(\omega_{k,n}, \alpha_{k+1,n}) \cup \{ \omega_{k,n}, \alpha_{k+1,n} \} 
		$$
		is a connected closed subset of $\Gamma_n'$ contained in the compact set $\overline{D_R} \setminus \Delta_M$. By compactness of the Hausdorff metric, we can pass to a further subsequence and assume that there exists
		$$
			F_k' = \lim_{n \to \infty} \Gamma_n'[\omega_{k,n}, \alpha_{k+1,n}], \quad 0 \leq k \leq l-1.
		$$
		Note that $F_k'$ is a connected set since this property is preserved by taking limits in the Hausdorff metric if the sequence of sets is contained in a fixed compact set of $\C$. Of course, one also has
		$$
			F_i' \subset F_0
		$$
		since $\Gamma_n'[\omega_{k,n}, \alpha_{k+1,n}] \subset \Gamma_n$ for each $n$.
		
		By compactness of $\eta_i$ we can also assume (taking further subsequences) that $(\alpha_{i,n})_n$ and $(\omega_{i,n})_n$ converge to, say,
		$$
			\alpha_i = \lim_{n \to \infty} \alpha_{i,n}, \quad \omega_i = \lim_{n \to \infty} \omega_{i,n}.
		$$
		Then
		$$
			\omega_0 = c, \quad \alpha_k \in F_{k-1}' \cap \eta_k, \quad \omega_k \in F_k' \cap \eta_k.
		$$
		Then (\ref{eq:etai}) implies that
		$$
			\alpha_k, \omega_k \in \Lambda(i_k), 1 \leq k \leq l.
		$$
		Each $F_k'$ is connected subset of $F_0 \subset \overline{\Lambda} \cup \supp \mu_0$, so $\omega_k$ and $\alpha_{k+1}$ belong to the same connected component of $\overline{\Lambda} \cup \supp \mu_0$. Thus
		$$
			\Lambda(c) = \Lambda(i_1) = \Lambda(i_2) = \cdots = \Lambda(i_l) = \Lambda(j),
		$$
		which is exactly what we wanted to prove.
		
%		The idea of the proof is similar for the other 2 cases, we prove the third case here. The first one have a similar proof on \cite{Rakhmanov2012}.		
		
		The third case is similar. Let $c_1,c_2$ be fixed points that are connected by $\Tcal$. For each $n$ there exists a simple piece-wise $C^1$ contour $\gamma_n \subset \Gamma_n$ connecting $c_1$ to $c_2$. 
		
		Assume that there exists a subsequence of $\gamma_n$ that does not intersect $\eta$. We pass to this subsequence, and thus $\gamma_n$ is contained in $D_R$ for each $n \in \N$. Going to a further subsequence we can assume that $\gamma_n$ converges to a set $\gamma$ in the Hausdorff metric. This $\gamma$ is a connected set that contains $c_1$ and $c_2$, and is contained in $F_0 \subset \overline{\Lambda} \cup \supp \mu_0$. This obviously implies that $\Lambda(c_1) = \Lambda(c_2)$.
		
		If there is no subsequence as above then we can assume that $\gamma_n$ always intersect $\eta$. Then we do a similar construction as in the case of item $(2)$: Put $\omega_{0,n} = c_1$ and let $\alpha_{1,n}$ be the first point of intersection of $\gamma_n$ with $\eta$, then $\alpha_{1,n} \in \eta_{i_1}$ and, since $\gamma_n$ ends at $c_2$, there exists a last point of intersection $\omega_{1,n}$. If after $\omega_{1,n}$ the contour $\gamma_n$ does not intersect intersect $\eta$ anymore, we put $\alpha_{2,n} = c_2$ and we are done. Otherwise, let $\alpha_{2,n}$ be the first point of intersection of $\gamma_n$ with $\eta$, then $\alpha_{2,n} \in \eta_{i_2}$ and, since $\gamma_n$ ends at $c_2$, there exists a last point of intersection of $\gamma_n$ with $\eta_{i_2}$ that we denote by $\omega_{2,n}$. The process is clear, and we obtain a sequence $c_1 = \omega_{0,n}, \alpha_{1,n}, \omega_{1,n}, \ldots, \alpha_{i_l,n}, \omega_{i_l,n}, \alpha_{i_l+1,n} = c_2$. The rest of the proof follows by similar arguments as in the case of (2). Analogously we verify (1).
		
\end{proof}

\section*{Acknowledgments}

Both authors are grateful to Arno Kuijlaars and Andrei Martínez-Finkelshtein for valuable discussions. 

% We also acknowledge both anonymous referees for the careful reading and useful suggestions.

\section*{Declarations.}
	
	Figures were made with Wolfram Mathematica 13.3 and Mathcha.io.

	\noindent \textbf{Ethical Approval.}	Not applicable.
	
	\noindent \textbf{Competing interests.} The authors declare that they have no affiliations with or any type of involvement with any organization or entity with financial interest in the results discussed in this paper.
	
	\noindent \textbf{Author's contributions.} Victor Alves: Formal Analysis, Writing - original draft, Writing review and editing. Guilherme L.F. Silva: Formal Analysis, Writing - original draft, Writing review and editing.
	
	\noindent \textbf{Funding.} V. A. is supported by grant \# 2020/13183-0, São Paulo Research Foundation (FAPESP) and grant \# 2022/12756-1, São Paulo Research Foundation (FAPESP). 

G.S. acknowledges support by São Paulo Research Foundation (FAPESP) under Grants \# 2019/16062-1 and \# 2020/02506-2, and by Brazilian National Council for Scientific and Technological Development (CNPq) under Grant \# 315256/2020-6. 
	
	\noindent	\textbf{Availability of Data and materials.} Data sharing is not applicable to this article since no datasets were created nor analysed.

\appendix

\section{Some technical lemmas on measures}

In the course of the analysis of critical measures and critical sets, we needed some results on measure theory that are more or less a folklore. For completeness, we prove them here.

The first result we state is known as Portmanteau's Theorem. It is usually stated for weak convergence, but we need its analogue for vague convergence. It is valid for convergence of measures on more general metric spaces, but for us it suffices to state it in $\C$.

\begin{theorem}\label{thm:portmanteau}
Let $(\mu_n)$ be a sequence of Borel probability measures on $\C$, and $\mu$ a Borel measure on $\C$, not necessarily a probability measure. The following statements are equivalent.
\begin{enumerate}[(i)]
\item The sequence $(\mu_n)$ converges vaguely to $\mu$.
\item $\mu_n(A)\to \mu(A)$, for every Borel bounded set $A\subset \C$ for which $\mu(\partial A)=0$.
\end{enumerate}
\end{theorem}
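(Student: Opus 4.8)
The plan is to establish both implications by the standard \emph{layer-cake} and regularization arguments, adapted to the locally compact but non-compact space $\C$; the only mild subtlety is that $\mu$ is a priori merely a Borel measure. The first observation, used throughout, is that $\mu$ is automatically finite with total mass at most $1$: for any compact $K\subset\C$, Urysohn's lemma provides $f\in C_c(\C)$ with $\mathbf 1_K\le f\le 1$, and vague convergence together with $\mu_n(\C)=1$ gives $\mu(K)\le\int f\,\dd\mu=\lim_n\int f\,\dd\mu_n\le 1$; taking the supremum over compact $K$ yields $\mu(\C)\le 1$. Consequently $\mu$ is a finite Borel measure on a locally compact, second countable Hausdorff space, hence a Radon measure, so it is inner regular on open sets and outer regular on all Borel sets, and Urysohn's lemma is at our disposal.

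For (i)$\Rightarrow$(ii), I would first prove the two one-sided estimates
\[
\limsup_{n}\mu_n(F)\le\mu(F)\quad\text{for every compact }F,\qquad
\liminf_{n}\mu_n(U)\ge\mu(U)\quad\text{for every bounded open }U.
\]
For the first, given an open $U\supset F$, pick $f\in C_c(\C)$ with $\mathbf 1_F\le f\le 1$ and $\supp f\subset U$; then $\limsup_n\mu_n(F)\le\lim_n\int f\,\dd\mu_n=\int f\,\dd\mu\le\mu(U)$, and outer regularity lets $\mu(U)\downarrow\mu(F)$. For the second, given a compact $K\subset U$, pick $f\in C_c(\C)$ with $\mathbf 1_K\le f\le 1$ and $\supp f\subset U$; then $\liminf_n\mu_n(U)\ge\lim_n\int f\,\dd\mu_n=\int f\,\dd\mu\ge\mu(K)$, and inner regularity lets $\mu(K)\uparrow\mu(U)$. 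Now, for a bounded Borel set $A$ with $\mu(\partial A)=0$, the set $\overline A$ is compact and $\inter A$ is bounded open, so applying these estimates and using $\mu(\inter A)=\mu(A)=\mu(\overline A)$ gives $\mu(A)\le\liminf_n\mu_n(A)\le\limsup_n\mu_n(A)\le\mu(A)$, i.e.\ $\mu_n(A)\to\mu(A)$.

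For (ii)$\Rightarrow$(i), take $f\in C_c(\C)$ and write $f=f^+-f^-$ with $f^\pm\in C_c(\C)$ nonnegative, so it suffices to treat $f\ge 0$. By the layer-cake formula, $\int f\,\dd\nu=\int_0^{\|f\|_\infty}\nu(\{f>t\})\,\dd t$ for any finite Borel measure $\nu$. Each superlevel set $\{f>t\}$ is bounded and open, with $\partial\{f>t\}\subset\{f=t\}$. The levels $t$ with $\mu(\{f=t\})>0$ form an at most countable set, since the sets $\{f=t\}$ are pairwise disjoint and $\mu$ is finite; hence for Lebesgue-a.e.\ $t$ one has $\mu(\partial\{f>t\})=0$, and (ii) gives $\mu_n(\{f>t\})\to\mu(\{f>t\})$. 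As $0\le\mu_n(\{f>t\})\le 1$ and the integration range $[0,\|f\|_\infty]$ is finite and independent of $n$, dominated convergence in $t$ yields $\int f\,\dd\mu_n\to\int f\,\dd\mu$, which is vague convergence.

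The argument is essentially routine; the only point that calls for a little care is the very first step, promoting the abstract Borel measure $\mu$ to a Radon measure via the uniform total-mass bound, so that the regularity statements and Urysohn's lemma may be invoked freely. Everything else is bookkeeping.
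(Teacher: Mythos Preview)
The paper does not actually prove this theorem; it simply records the statement and cites \cite[Theorem~3.2, p.~52]{Resnick2007} for a proof. Your argument is the standard one and is correct.

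One small logical point: your opening observation that $\mu(\C)\le 1$ is proved using vague convergence, i.e.\ assuming (i). You then say it is ``used throughout,'' but in the direction (ii)$\Rightarrow$(i) you cannot invoke (i). What you actually need there is only that $\mu$ is finite on the compact set $\supp f$, so that the level sets $\{f=t\}$ with positive $\mu$-mass are at most countable and the layer-cake formula applies. This is immediate once one reads ``Borel measure'' in the sense standard for vague convergence, namely a Radon (locally finite) measure, which is the convention in the cited reference. If you want the argument to be fully self-contained under (ii) alone, you can recover local finiteness directly: for any compact $K$ choose $R$ large with $K\subset D_R$; since the circles $\partial D_r$, $r>0$, are pairwise disjoint, at most countably many of them can have positive $\mu$-mass once $\mu$ is $\sigma$-finite, and one then applies (ii) to $A=D_R$ to get $\mu(K)\le\mu(D_R)=\lim_n\mu_n(D_R)\le 1$. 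Either way this is a presentational wrinkle rather than a genuine gap; the substance of your proof is fine.
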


A proof for this theorem can be found in \cite[Theorem 3.2, pg 52]{Resnick2007}. The first result we prove is a convergence statement for measures.

\begin{proposition}\label{prop:vagueconvergencerestriction}
Let $(\mu_n)$ be a sequence of Borel probability measures on $\C$, and $\mu$ a Borel measure on $\C$, not necessarily a probability measure. Suppose that $\mu_n\to \mu$ vaguely. If $B$ is a Borel set for which $\mu(\partial B)=0$, then the sequence $(\restr{\mu_n}{B})$ converges vaguely to $\restr{\mu}{B}$.
\end{proposition}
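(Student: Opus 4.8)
The plan is to reduce the claim to the Portmanteau characterization (Theorem~\ref{thm:portmanteau}). So the goal becomes: for every bounded Borel set $A$ with $(\restr{\mu}{B})(\partial A)=0$, show that $(\restr{\mu_n}{B})(A)\to (\restr{\mu}{B})(A)$, i.e.\ $\mu_n(A\cap B)\to \mu(A\cap B)$. To be able to invoke Theorem~\ref{thm:portmanteau} for the set $A\cap B$, I need $\mu(\partial(A\cap B))=0$. This is where the hypothesis $\mu(\partial B)=0$ enters, together with the standard topological inclusion $\partial(A\cap B)\subset \partial A\cup \partial B$.

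\textbf{Key steps, in order.} First I would record the inclusion $\partial(A\cap B)\subset \partial A\cup\partial B$, valid for arbitrary subsets of a topological space. Second, I would need to upgrade the hypothesis ``$(\restr{\mu}{B})(\partial A)=0$'' to ``$\mu(\partial A\cap B)=0$'': indeed $(\restr{\mu}{B})(\partial A)=\mu(\partial A\cap B)$ directly by definition of the restriction, so this is immediate. Third, combining with $\mu(\partial B)=0$ gives
$$
\mu(\partial(A\cap B))\le \mu\big((\partial A\cap B)\cup\partial B\big)\le \mu(\partial A\cap B)+\mu(\partial B)=0.
$$
Wait --- one must be slightly careful here, since $\partial(A\cap B)\subset\partial A\cup\partial B$ but I only control $\mu$ of $\partial A$ \emph{intersected with} $B$, not all of $\partial A$. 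To fix this, refine the inclusion: a boundary point of $A\cap B$ is a limit of points of $A\cap B$ and a limit of points of the complement; if it lies in $B^{\circ}$ (the interior of $B$) then it is automatically a boundary point of $A$ and it lies in $B$, hence in $\partial A\cap B$; if it is not in $B^{\circ}$ then it lies in $\C\setminus B^{\circ}=\overline{\C\setminus B}$, and since $A\cap B$ points accumulate to it, it also lies in $\overline{B}$, so it lies in $\overline B\setminus B^{\circ}=\partial B$. Thus $\partial(A\cap B)\subset (\partial A\cap B)\cup \partial B$, which is exactly what the displayed estimate needs. Fourth, $A\cap B$ is bounded (as a subset of the bounded set $A$), so Theorem~\ref{thm:portmanteau} applied to the vaguely convergent sequence $\mu_n\to\mu$ with the set $A\cap B$ yields $\mu_n(A\cap B)\to\mu(A\cap B)$, i.e.\ the desired convergence $(\restr{\mu_n}{B})(A)\to(\restr{\mu}{B})(A)$. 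Finally, since $\restr{\mu}{B}$ is a Borel measure on $\C$ and the convergence $(\restr{\mu_n}{B})(A)\to(\restr{\mu}{B})(A)$ holds for all bounded Borel $A$ with $(\restr{\mu}{B})(\partial A)=0$, the reverse implication in Theorem~\ref{thm:portmanteau} gives that $(\restr{\mu_n}{B})$ converges vaguely to $\restr{\mu}{B}$.

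\textbf{Main obstacle.} The only genuinely delicate point is the topological refinement $\partial(A\cap B)\subset(\partial A\cap B)\cup\partial B$: the naive inclusion $\partial(A\cap B)\subset\partial A\cup\partial B$ is true but not quite enough, because the hypothesis only gives us $\mu$-nullity of $\partial A$ after intersecting with $B$. Handling this correctly via the interior/closure dichotomy described above is the crux; everything else is a direct unwinding of definitions and two applications of Portmanteau (one in each direction). A minor care point is that $\restr{\mu}{B}$ need not be a probability measure even though the $\mu_n$ are, but Theorem~\ref{thm:portmanteau} is already stated allowing the limit to be a general Borel measure, so this causes no difficulty.
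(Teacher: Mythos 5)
Your proof is correct and follows the same overall strategy as the paper's: reduce to the Portmanteau Theorem~\ref{thm:portmanteau} in both directions, with the essential work being a topological estimate that yields the null-boundary hypothesis. The implementations differ slightly. The paper decomposes $A = (A\cap \inter B) \cup (A\cap\partial B) \cup (A\cap \overline B^c)$, disposes of the middle piece using $\mu_n(\partial B)\to \mu(\partial B)=0$, and then handles the $\inter B$ piece via a boundary inclusion. You instead prove the single refined inclusion $\partial(A\cap B)\subset(\partial A\cap B)\cup\partial B$ by the interior/closure dichotomy and apply Portmanteau once, directly to $A\cap B$. Your version is slightly cleaner, and it quietly sidesteps a subtlety in the paper's argument: concluding $\mu_n(\partial B)\to\mu(\partial B)$ from Theorem~\ref{thm:portmanteau} requires $\partial B$ to be bounded, which is not assumed in the proposition (it does hold in the paper's application, where $B=D_R$). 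Since you only ever invoke Portmanteau on the bounded set $A\cap B$, your argument works under the stated hypotheses without this implicit assumption. The refined inclusion you single out as the crux is exactly the right point, and your justification of it is sound.
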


\begin{proof}
Let $A$ be a Borel set for which $\mu(\partial A\cap B)=\restr{\mu }{B}(\partial A)=0$. We will show that 
\begin{equation}\label{eq:convrestrmun}
\restr{\mu_n}{B}(A)\to \restr{\mu}{B}(A),
\end{equation}
and the result will then follow from \eqref{thm:portmanteau}.

We denote by $\inter E, \overline E$ and $E^c$ the interior, topological closure and complement of a set $E$, respectively. Write
$$
A=(A\cap \inter B)\cup (A\cap \partial B)\cup (A\cap \overline B^c).
$$
This is a union over disjoint sets, and $(A\cap \overline B^c)\cap B\subset \overline B^c\cap B=\emptyset$, so
$$
\restr{\mu_n}{B}(A)=\restr{\mu_n}{B}\left(A\cap \inter B\right)+\restr{\mu_n}{B}\left(A\cap \partial B\right),
$$
and likewise
$$
\restr{\mu}{B}(A)=\restr{\mu}{B}\left(A\cap \inter B\right)+\restr{\mu}{B}\left(A\cap \partial B\right).
$$
Using the vague convergence of $(\mu_n)$ towards $\mu$, we obtain
$$
0\leq \restr{\mu_n}{B}\left(A\cap \partial B\right) \leq \mu_n(\partial B)\to \mu(\partial B)=0.
$$
Also,
$$
0\leq \restr{\mu}{B}(A\cap \partial B)\leq \mu(\partial B)=0,
$$
and the convergence statement \eqref{eq:convrestrmun} becomes equivalent to proving that
\begin{equation}\label{eq:convrestrmun2}
\restr{\mu_n}{B}\left(A\cap \inter B\right) \to \restr{\mu}{B}\left(A\cap \inter B\right).
\end{equation}
Since $\partial (E_1\cup E_2)\subset \partial E_1\cup \partial E_2$ for arbitrary sets $E_1,E_2$, we also have that 
$$
0\leq \mu(\partial (A\cap \inter B))\leq \mu(\partial A\cap \partial B)\leq  \mu(\partial B)=0.
$$
Using Theorem~\ref{thm:portmanteau},
$$
\restr{\mu_n}{B}\left(A\cap \inter B\right)=\mu_n(A\cap \inter B)\to \mu(A\cap \inter B).
$$
Thus, \eqref{eq:convrestrmun2} is proven, and the proof is complete.
\end{proof}

The second result is a simple statement on measures.

\begin{proposition}\label{prop:appmeaszero}
Let $\mu$ be a finite Borel measure on $\C$. For each $R_0>0$ given, there exists $R>R_0$ for which
$$
\mu(\partial D_R)=0.
$$
\end{proposition}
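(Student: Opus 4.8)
The plan is to exploit the fact that the circles $\partial D_R$, $R>0$, are pairwise disjoint, together with the finiteness of $\mu$, to show that only countably many of them can carry positive mass. Concretely, for each positive integer $n$ I would introduce the set of radii
$$
E_n \defeq \left\{ R > 0;\ \mu(\partial D_R) \geq 1/n \right\},
$$
and observe that if $R_1,\ldots,R_k$ are distinct elements of $E_n$, then by disjointness and additivity $k/n \leq \sum_{j=1}^k \mu(\partial D_{R_j}) = \mu\big(\bigcup_{j=1}^k \partial D_{R_j}\big) \leq \mu(\C) < \infty$. Hence $E_n$ has at most $n\,\mu(\C)$ elements, so it is finite.

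The second step is to note that
$$
\left\{ R > 0;\ \mu(\partial D_R) > 0 \right\} = \bigcup_{n \in \N} E_n
$$
is a countable union of finite sets, hence countable. Since $(R_0,+\infty)$ is uncountable, it cannot be contained in this countable set, so there exists $R > R_0$ with $\mu(\partial D_R)=0$; this even gives the slightly stronger statement (the one actually used in Lemma~\ref{lem:WeakConvDoubleIntegral}) that such an $R$ can be found beyond any prescribed threshold.

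I do not anticipate a genuine obstacle here: the argument is the standard "disjoint sets of positive measure are at most countable" fact, and the only point requiring a moment's care is packaging it so as to produce $R$ larger than the given $R_0$, which is immediate from the uncountability of the half-line. Everything else is bookkeeping with additivity and the finiteness of $\mu$.
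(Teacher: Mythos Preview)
Your proposal is correct and follows essentially the same approach as the paper: both introduce the level sets $\{R:\mu(\partial D_R)\geq 1/n\}$ and exploit disjointness of the circles together with finiteness of $\mu$ to conclude that only countably many radii carry positive mass. The paper phrases it as a contradiction argument (assuming all $R>R_0$ are bad and deducing that some level set is uncountable, hence infinite, hence $\mu$ is infinite), whereas you give the direct version with the explicit bound $|E_n|\leq n\,\mu(\C)$; the underlying idea is identical.
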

\begin{proof}
Suppose the contrary. Then there exists $R_0>0$ such that
\begin{equation}\label{auxeqapp1}
\mu(\partial D_R)>0,\quad \text{for every } R>R_0.
\end{equation}
Consider now the family of sets
$$
A_m\defeq \left\{ R>R_0 \; ; \; \mu(\partial D_R)\geq 1/m \right\}, \quad m\in \N.
$$
From \eqref{auxeqapp1} we learn that $(R_0,+\infty)\subset \cup_{m\geq 1}A_m$. In particular, at least one of the sets $A_m$ is uncountable, say $A_{m_0}$, and therefore it has infinitely many members. We extract a sequence $(R_n)$ of pairwise distinct elements in $A_{m_0}$, and compute
$$
\mu\left(\bigcup_{n\geq 1}\partial D_{R_n}\right)=\sum_{n=1}^\infty \mu(\partial D_{R_n})\geq \sum_{n=1}^\infty \frac{1}{m}=+\infty,
$$
which is in contradiction with the fact that $\mu$ is a finite measure.
\end{proof}

	\section{Quadratic differentials}\label{sec:QDs}
	
On this section we briefly discuss relevant properties on quadratic differentials in $\overline \C$. For a full presentation on quadratic differentials, the interest reader is referred to the book by Strebel \cite{Strebel1984}, and also Jensen's chapter on Pommerenke's book \cite[Chapter 8]{Pommerenke1975}. 

	A rational function $R$ defines on the Riemann sphere $\overline{\C}$ the quadratic differential $-R(z) \dd z^2$. 
	
	The \emph{critical points} of $R$ are the zeroes and poles of $R$, including possibly the point $z=\infty$. A critical point $p \in \C$ has \emph{order}
	\begin{itemize}
		\item $n > 0$ if $p$ is a zero of $R$ of order $n$;
		\item $-n < 0$ if $p$ is a pole of $R$ of order $n$.
	\end{itemize}
	A non-critical point has order $0$. The point $\infty$ has order
	$$
		\order (\infty) = -4 - \sum_{p \in \C} \order (p).
	$$
	A critical point $p$ is \emph{finite} if $\order(p) \geq -1$.
	
	An arc $\gamma$ is called an \emph{horizontal arc} of $-R(z) \dd z^2$ if, along $\gamma$,
	$$
		-R(\alpha(t)) \dd (\alpha(t))^2 > 0
	$$
	where $\alpha$ is any parametrization of $\gamma$.
	At a regular point, this condition is locally stated as 
	$$
		\Real \int \sqrt{R(s)} \, \dd s \equiv \text{const}, \quad z \in \gamma.
	$$
	A maximal horizontal arc is called a \emph{horizontal trajectory}, or simply a \emph{trajectory}, of $-R(z) \dd z^2$. A trajectory that ends at two finite critical points is called a \emph{critical trajectory}.
	
	The local structure of trajectories is as follows:
	\begin{itemize}
		\item If $p$ is a zero of order $n \geq 1$, then there are exactly $n+2$ trajectories emanating from $p$, and they do so at equal consecutive angles $\frac{2\pi}{n+2}$;
		\item Any regular point $p$ belongs to an open arc of exactly one trajectory, not necessarily critical.
		\item From a simple pole $p$, there is exactly one trajectory emanating from $p$;
		\item From a double pole $p$, the local behavior depends on the \emph{residue} of $p$, which is defined as the residue $c$ of the function $\sqrt{-R(z)}$ at $z = p$, well defined up to a sign. If $c$ is purely imaginary, there is no trajectory passing through $p$, and  trajectories near $p$ consist of closed curves winding around $p$. Otherwise, every direction has a trajectory emanating from $p$.
	\item From a pole $p$ of order $2(k+1)$, $k > 0$, there are $2k$ directions, forming equal angles at $p$, such that any trajectory ending at $p$ does so in one of these directions. There is also a neighborhood $U$ of $p$ such that any trajectory entering $U$ ends at the pole in at least one direction, and trajectories entirely contained in $U$ end up at $p$ in two consecutive directions;
	\end{itemize}

In this paper, we used quadratic differentials of a particular rational structure, namely of the form
	$$
		R(z) = \left(\Phi'(z) \right)^2 + \frac{D(z)}{A(z)C(z)}, \quad \Phi'(z) = \frac{B(z)}{A(z)}, \quad \gcd(A,B) = 1,
	$$
	where $D$ is a polynomial of degree $\deg B + \deg C - 1$, $\Phi$ is given by \eqref{eq:PhiDef} and $C$ is as in \eqref{eq:deffC}. In terms of the structure of zeros and poles, the following properties hold:
	\begin{itemize}
		\item If $z_j$ is a pole of $\Phi$ of order $m_j$, then $z_j$ is a pole of $-R(z)\dd z^2$ of order $2(m_j+1)$;
		\item Each  $w_j$ that is different from each $z_i$ is a double pole of $-R(z)\dd z^2$, and the residue of $\sqrt{-R(z)}$ at $w_j$ is exactly $\rho_j$.
		\item Each fixed point $c_j$ is a simple pole of $-R(z)\dd z^2$ if $D(c_j)\neq 0$, a regular point if $c_j$ is a simple zero of $D$, or a zero of order $\beta$ of $-R(z)\dd z^2$, in case $c_j$ is a zero of order $\beta+1\geq 2$ of $D$;
		\item The point $p=\infty$ is a pole of order $2(N+1)$ of $-R(z)\dd z^2$, where $N = \deg P - \deg Q$.
	\end{itemize}

\bibliographystyle{abbrv}  %%%%  shows [1]
%\bibliography{bibliography}
%%	\printbibliography

\end{document}